\documentclass[11pt]{article}
\usepackage{amsthm, amsmath, amssymb, amsfonts, url, booktabs, tikz, setspace, fancyhdr, bm}
\usepackage[margin=1in]{geometry}
\usepackage{hyperref, enumerate}
\usepackage[shortlabels]{enumitem}
\usepackage[babel]{microtype}
\usepackage[english]{babel}
\usepackage[capitalise]{cleveref}
\usepackage{comment}
\usepackage{bbm}
\usepackage{csquotes}
\usepackage{tikz-cd}
\usepackage{graphicx}
\usepackage{float}
\usepackage{xcolor}
\usepackage{mathtools}
\usepackage{mathrsfs}
\usepackage{appendix}
\usepackage{amssymb}
\usepackage{enumitem}
\usepackage{listings}
\usetikzlibrary{positioning, arrows.meta, shapes.geometric}

\counterwithin{figure}{section}

\newtheorem{theorem}{Theorem}[section]
\newtheorem{prop}[theorem]{Proposition}
\newtheorem{lemma}[theorem]{Lemma}
\newtheorem{cor}[theorem]{Corollary}

\usetikzlibrary{decorations.pathmorphing}

\theoremstyle{definition}
\newtheorem{defn}[theorem]{Definition}
\newtheorem*{defn-non}{Definition}

\newtheorem{probl}[theorem]{Problem}

\newlist{Case}{enumerate}{2}
\setlist[Case, 1]{%
    label           =   {\bfseries Case \arabic*.},
    labelindent=1em ,labelwidth=1.3cm, labelsep*=1em, leftmargin =!
}
\setlist[Case, 2]{%
    label           =   {\bfseries Subcase \arabic{Casei}.\arabic*.},
    labelindent=-1em ,labelwidth=1.3cm, labelsep*=1em, leftmargin =!
}

\usepackage{todonotes}

\newcommand{\ex}{\mathrm{ex}}

\newcommand{\Gr}{\operatorname{Gr}}

\newcommand{\w}{{\scriptscriptstyle\Lambda}}

\DeclareMathOperator{\Alt}{Alt}

\DeclareMathOperator{\Hom}{Hom}

\DeclareMathOperator{\spa}{span}
\DeclareMathOperator{\ch}{char}
\DeclareMathOperator{\FP}{FP}
\DeclareMathOperator{\GQ}{GQ}
\DeclareMathOperator{\T}{T}

\title{Tur\'{a}n Problems for Multilinear Maps} 
\author{
Qiyuan Chen\thanks{State Key Laboratory of Mathematical Sciences, Academy of Mathematics and Systems Science, Chinese Academy of Sciences. Emails: chenqiyuan@amss.ac.cn, keyk@amss.ac.cn}
\and
Zixiang Xu\thanks{School of Mathematical Sciences, Zhejiang University, Hangzhou, China. Email: zixiangxu@zju.edu.cn.}
\and
Ke Ye\footnotemark[1]
}

\begin{document}
\date{}
\maketitle

\begin{abstract}
We study Tur\'{a}n-type extremal problems for alternating and unrestricted multilinear maps. For alternating order-$d$ multilinear maps $T: (\mathbb F^n)^d\to \mathbb{F}^m$, we determine, over algebraically closed fields of arbitrary characteristic, the largest $k$ such that every $T$ vanishes identically on $\mathbb{V}^d$ for some $k$-dimensional subspace $\mathbb{V}$. This extends the bilinear formula of Buhler, Gupta, and Harris [J. Algebra, 1987] to arbitrary order and resolves a question of Qiao [Discrete Anal., 2023]. We also solve the analogous problem for arbitrary, not necessarily alternating, multilinear maps by determining the largest $k$ such that every $T$ vanishes on $\mathbb{V}_1\times\cdots\times \mathbb{V}_d$ for some $k$-dimensional subspaces $\mathbb{V}_1,\dots,\mathbb{V}_d$. These results yield exact values, over algebraically closed fields, of the Feldman--Propp number [Adv. Math., 1992], the Tur\'{a}n number [Discrete Anal., 2023], and the Gow--Quinlan number [Linear Multilinear Algebra, 2006] associated with alternating multilinear maps. Finally, motivated by the Erdős box problem, we give a purely algebraic derivation of the Conlon--Pohoata--Zakharov lower bound [Discrete Anal., 2021] by combining analytic and partition rank estimates with an incidence count. In the relevant parameter range, we further show that every multilinear map defined over a finite field has many isotropic tuples of $2$-dimensional subspaces over extensions of sufficiently divisible degree. This rules out the natural route to improving the Conlon--Pohoata--Zakharov exponent by selecting multilinear maps with substantially fewer bad isotropic configurations.
\end{abstract}

\section{Introduction}
Multilinear maps and their isotropic subspaces have been extensively studied in the context of group theory \cite{Alperin65,Baer38,buhler1987isotropic,eberhard2025probabilistic,GQ17,RV24}, arithmetic combinatorics \cite{chen2025isotropy,DS10,FP92,qiao2020tur},  graph theory~\cite{Bukh15,Bukh24,chen2025graph,CPZ20},  algebraic geometry \cite{BR21,Catanese91,Karpenko16,Tevelev01} and computer science \cite{BCGQS21,BMW17,GQ17,Sun23}.  This paper focuses on Tur\'{a}n problems for (alternating) multilinear maps,  concerning the largest $k$ such that every such map admits a $k$-dimensional isotropic subspace.  
\subsection{Main Results} 
An order-$d$ multilinear map $T: \mathbb{F}^n \times \cdots \times \mathbb{F}^n \to \mathbb{F}^m$ is said to be \emph{alternating} if $T(u_1,\dots, u_d) = 0$ whenever $u_i = u_j$ for some distinct $1 \le i,j \le d$. We denote by $\Alt^d(\mathbb{F}^n, \mathbb{F}^m)$ the space of all order-$d$ alternating multilinear maps from $(\mathbb{F}^n)^d$ to $\mathbb{F}^m$. Here and hereafter, we write $X^d$ for the Cartesian product of $d$ copies of a set $X$. Moreover,  we use $\Gr(k, \mathbb{F}^n)$ to denote the \emph{Grassmannian} of $k$-dimensional subspaces in $\mathbb{F}^n$. Throughout this paper, we write $\mathbb{N}$ for the set of positive integers and $\mathbb{N}_0$ for the set of non-negative integers. Given $T \in \Alt^d(\mathbb{F}^n,  \mathbb{F}^m)$,  a subspace $\mathbb{V} \in \Gr(k,\mathbb{F}^n)$ is \emph{isotropic} if $T\big\vert_{\mathbb{V}} = 0$, that is,  $T(v_1,\dots,  v_d) = 0$ for all $v_1,\dots,  v_d \in \mathbb{V}$.  The following Tur\'{a}n problem for $\Alt^d(\mathbb{F}^n,  \mathbb{F}^m)$ was posed by Qiao in \cite[Section~5.7]{qiao2020tur}.
\begin{probl}[Tur\'{a}n problem for $\Alt^d(\mathbb{F}^n,\mathbb{F}^m)$]\label{probl:TuranAlt}
Given a field $\mathbb{F}$ and positive integers $n,d,m$,  find the largest number $k \coloneqq \alpha_{\w}(\mathbb{F},n,d,m)$ such that every $T\in \Alt^d(\mathbb{F}^n,\mathbb{F}^m)$ has a $k$-dimensional isotropic subspace.  
\end{probl} 
We refer interested readers to \cite[Section~5]{qiao2020tur} for a detailed discussion of the analogy between the classical Tur\'{a}n problem for hypergraphs and Problem~\ref{probl:TuranAlt}.  For $d = 2$,  Problem~\ref{probl:TuranAlt} was first studied by Ol'\v{s}anski\u{i} \cite{sanski78} over finite fields,  and later by Buhler, Gupta and Harris \cite{buhler1987isotropic} for arbitrary fields.  By the main theorem of \cite{buhler1987isotropic}, \begin{equation}\label{eq:BGH}
\alpha_{\w}(\mathbb{F},n,2,m) = \Big\lfloor \frac{2n + m}{m+2} \Big\rfloor
\end{equation}
holds for $m \ge 2$ and algebraically closed fields $\mathbb{F}$ with $\ch (\mathbb{F}) \ne 2$.  As commented in \cite[Section~5.7]{qiao2020tur},  generalizing the intersection-theoretic method in \cite{buhler1987isotropic,HT84} to calculate $\alpha_{\w}(\mathbb{F},n,d,m)$ for $d \ge 3$ poses substantial challenges. Our first main result generalizes \eqref{eq:BGH} to $d \ge 3$,  solving Problem~\ref{probl:TuranAlt} for algebraically closed fields of arbitrary characteristic.
\begin{theorem}\label{main theorem-1}
Let $\mathbb{F}$ be an algebraically closed field and $n,d,m$ be positive integers.  
\begin{enumerate}[(a)]
\item\label{main theorem-1:item1} If $m \ge 2$, then $\alpha_{\w}(\mathbb{F},n,d,m) = \max \big\lbrace 
s \in \mathbb{N}_0: s(n-s)\ge m\binom{s}{d}
\big\rbrace$.
\item\label{main theorem-1:item2}If $m =  1$ , then
\end{enumerate}
\[
\alpha_{\w}(\mathbb{F},n,d,1) = 
\begin{cases}
\lceil \frac{n}{2} \rceil \quad &\text{if~} d = 2, \\
4 \quad &\text{if~} (d,n) = (3,7),  \\
n - 2 \quad &\text{if~} d = n-2 \text{~is even},  \\
\max \big\lbrace 
s \in \mathbb{N}_0: s(n-s)\ge \binom{s}{d}
\big\rbrace \quad &\text{otherwise}. 
\end{cases}
\]
\end{theorem}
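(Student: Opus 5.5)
The plan is to recast the problem as a statement about zero loci of sections on the Grassmannian. A map $T\in\Alt^d(\mathbb{F}^n,\mathbb{F}^m)$ is the same as $m$ linear forms on $\Lambda^d\mathbb{F}^n$. Restricting to a $k$-space $\mathbb{V}$ turns each form into a section of $\Lambda^d\mathcal{S}^\vee$ on $\Gr(k,\mathbb{F}^n)$, where $\mathcal{S}$ is the tautological subbundle. So the isotropic subspaces of $T$ are exactly the common zero locus $Z(T)$ of a section of the bundle $\mathcal{E}=(\Lambda^d\mathcal{S}^\vee)^{\oplus m}$, which has rank $m\binom{k}{d}$ on a variety of dimension $k(n-k)$. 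Proving the theorem then comes down to two tasks: (i) show $Z(T)\neq\emptyset$ for every $T$ when $k(n-k)\ge m\binom kd$, outside the listed exceptions; and (ii) exhibit a $T$ with $Z(T)=\emptyset$ when $k(n-k)<m\binom kd$.

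For the upper bound (ii), I would use a dimension count. The bundle $\mathcal{E}$ is globally generated by the sections coming from $\Alt^d(\mathbb{F}^n,\mathbb{F}^m)$, because the restriction $\Lambda^d(\mathbb{F}^n)^\vee\to\Lambda^d\mathbb{V}^\vee$ is surjective. Kleiman–Bertini type reasoning in arbitrary characteristic can be replaced by the incidence variety
\[I=\{(T,\mathbb{V}):T|_{\mathbb{V}}=0\}.\]
This is a vector bundle over the Grassmannian of corank $m\binom kd$ in $\Alt^d$. Hence $\dim I=\dim\Alt^d+k(n-k)-m\binom kd<\dim\Alt^d$, so a generic $T$ has no isotropic $k$-space. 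The argument is characteristic-free.

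For the lower bound (i), the key step is to show that the top Chern class $c_{\mathrm{top}}(\mathcal{E})$ pushed into degree-$k(n-k)$ classes is nonzero, or more precisely that $c_{m\binom kd}(\mathcal{E})\ne0$ in $A^*(\Gr(k,n))$. When the rank is at most the dimension, nonvanishing of the top Chern class of a globally generated bundle forces every section to vanish somewhere. Indeed, a nowhere-vanishing section would split off a trivial summand and kill $c_{\mathrm{top}}$. I would compute $c_{\mathrm{top}}(\Lambda^d\mathcal{S}^\vee)$ via the splitting principle as $\prod_{i_1<\dots<i_d}(x_{i_1}+\dots+x_{i_d})$ in the Chern roots of $\mathcal{S}^\vee$, and then take its $m$-th power. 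To show nonvanishing I would pair with a suitable Schubert class, or expand in Schur polynomials $s_\lambda(x)$ with $\lambda\subseteq k\times(n-k)$. The key observation is that the product is a symmetric polynomial with nonnegative Schur coefficients, since $\Lambda^d\mathcal{S}^\vee$ is a nef, globally generated bundle. Fulton–Lazarsfeld positivity then gives $c_{\mathrm{top}}(\mathcal{E})\cdot\sigma\ge0$, with strict positivity exactly when some Schur term fits in the $k\times(n-k)$ box. The main obstacle is proving this fitting condition holds whenever $m\ge2$ and $k(n-k)\ge m\binom kd$. Positivity holds in all characteristics, but the issue is detecting a nonzero term. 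I would first try to find an explicit monomial such as $x_1^{a_1}\cdots x_k^{a_k}$ with all $a_i\le n-k$ and nonzero coefficient. To do this I would bound the largest exponent of any root in $\prod(x_{i_1}+\dots)^m$ and use that $m\ge2$ lets us distribute the degree evenly.

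For $m=1$, the same Chern class computation applies, but the top class can vanish. Fulton–Lazarsfeld nonvanishing criteria fail exactly when $\mathcal{E}$ has a trivial-like structure, and I would identify the listed exceptions this way. For $d=2$ a generic form has maximal isotropic subspaces of dimension $\lceil n/2\rceil$ by direct linear algebra; this exceeds the count because of the symplectic structure. For $d=n-2$, $\Lambda^{n-2}$ is dual to $\Lambda^2$ on the ambient space, so one gets explicit answers by duality. The case $(3,7)$ corresponds to the generic $3$-form, the $G_2$-invariant form, whose isotropic subspaces can be analyzed directly via the octonion structure. In each exception I would give both a construction and a proof of existence. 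For all other $m=1$ cases I would verify that the Schur-nonvanishing argument from the $m\ge2$ case still goes through.
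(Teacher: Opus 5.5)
\textbf{Verdict: genuine gap.} Your reformulation is correct and characteristic-free. Isotropic $k$-spaces of $T$ are the zeros of a section of $\mathcal{E}=(\Lambda^d\mathcal{S}^\vee)^{\oplus m}$ on $\Gr(k,\mathbb{F}^n)$. The incidence count gives the upper bound. And $c_{\mathrm{top}}(\mathcal{E})\neq0$ would force every section to vanish.

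However, $\mathcal{E}$ is globally generated, so a generic section has a zero scheme of the expected dimension whose class is $c_{\mathrm{top}}(\mathcal{E})$. Hence $c_{\mathrm{top}}(\mathcal{E})\neq 0$ is \emph{equivalent} to the lower bound you want. This is the Buhler--Gupta--Harris Chow-ring route, and all the content lies in the nonvanishing, which you leave open.

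The detection method you suggest does not work. A monomial $x^a$ with all $a_i\le n-k$ in $f=\prod_{|I|=d}(\sum_{i\in I}x_i)^m$ does not certify a Schur term $s_\lambda$ with $\lambda_1\le n-k$. Such monomials also occur in $s_\lambda$ with $\lambda_1>n-k$, and those vanish in $A^*(\Gr(k,n))$. A counterexample sits inside the problem itself. For $(n,d,m,k)=(4,2,1,3)$ we have $k(n-k)=3\ge\binom32$, and $f=(x_1+x_2)(x_1+x_3)(x_2+x_3)=s_{(2,1)}(x_1,x_2,x_3)$ contains $2x_1x_2x_3$. Yet $s_{(2,1)}=h_2h_1-h_3=0$ in $A^*(\Gr(3,4))$, as it must be, since a symplectic form on $\mathbb{F}^4$ has no isotropic $3$-space.

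The other reading, bounding the largest exponent of a single root, is a valid sufficient criterion. But the exponent of $x_1$ in $f$ is $m\binom{k-1}{d-1}$. So it only proves the bound when $n-k\ge m\binom{k-1}{d-1}$, a factor $d$ short of the required $n-k\ge \frac{m}{d}\binom{k-1}{d-1}$. In short, you have no mechanism through which $m\ge2$ enters, and the example shows it must enter essentially.

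For $m=1$ the proposal is only a sketch. Fulton--Lazarsfeld strict positivity requires ampleness, and $\Lambda^d\mathcal{S}^\vee$ is not ample unless $d=k$: its restriction to a line of the Grassmannian has trivial summands. There is no criterion saying the top class vanishes ``exactly when $\mathcal{E}$ has trivial-like structure''. Since no $m\ge2$ argument was actually supplied, ``verifying it still goes through'' outside the exceptions is not a plan. The $m=1$ lower bounds, including $4$ for $(d,n)=(3,7)$, are precisely Tevelev's theorem.

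The paper does not reprove Tevelev's theorem. It cites it in characteristic $0$ and transfers it to characteristic $p$ by three steps:
\begin{enumerate}[(i)]
\item lift $T$ to $W(\mathbb{F})$;
\item use properness of the scheme of isotropic rank-$k$ summands;
\item use the fact that finite extensions of the fraction field of $W(\mathbb{F})$ have residue field $\mathbb{F}$.
\end{enumerate}
Your octonion argument for $(3,7)$ also fails in characteristic $2$, where $u_0\times(u_0\times w)=-2\langle u_0,u_0\rangle w$ degenerates. The paper handles that case with an explicit form and a Gr\"obner basis check.

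For $m\ge2$ the paper avoids Chow rings altogether. Suppose a generic $T$ had no isotropic $k_0$-space. Then the variety $I_2$ of pairs of isotropic subspaces would have dimension at least $2\dim I_1-\dim V$. On the other hand, stratifying pairs by $\ell=\dim(\mathbb{U}\cap\mathbb{V})$ bounds $\dim I_2$ from above. Lemma~\ref{lem-inequality}, which is exactly where $m\ge2$ is used, makes these two bounds incompatible.
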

Our proof of Theorem~\ref{main theorem-1}\ref{main theorem-1:item2} relies on a result of Tevelev \cite{Tevelev01} concerning the isotropy index of a generic element in $\Alt^d(\mathbb{F}^n,\mathbb{F})$ when $\ch (\mathbb{F}) = 0$. In particular, the characteristic-zero case of Theorem~\ref{main theorem-1}\ref{main theorem-1:item2} follows immediately from Tevelev's result together with Corollary~\ref{coro-irreducible and closed}. It is worth emphasizing that Tevelev's proof uses the Borel--Weil--Bott theorem to establish a necessary condition for a generic section of a line bundle to vanish. However, this argument does not extend directly to the positive-characteristic case. Furthermore, our proof of Theorem~\ref{main theorem-1}\ref{main theorem-1:item2} does not extend to \(\Alt^d(\mathbb{F}^n,\mathbb{F}^m)\) with $m\ge2$, since in that setting one must study the common zero locus of several global sections of a line bundle rather than the zero locus of a single section. Therefore, \ref{main theorem-1:item1} and \ref{main theorem-1:item2} require fundamentally different arguments, and we prove them separately in Theorems~\ref{main theorem-1a} and \ref{main theorem-1b}, respectively.

Let $\Hom^d(\mathbb{F}^{n},  \mathbb{F}^m)$ be the space of order-$d$ multilinear maps from $(\mathbb{F}^{n})^d$ to $\mathbb{F}^m$.  For $T\in \Hom^d(\mathbb{F}^{n},  \mathbb{F}^m)$ and $(\mathbb{V}_1,\dots,  \mathbb{V}_d) \in \Gr(k,  \mathbb{F}^n)^d$,  we write $T|_{\mathbb{V}_1 \times \cdots \times  \mathbb{V}_d} = 0$ if $T(v_1,\dots,  v_d) = 0$ for any $(v_1,\dots,  v_d) \in \mathbb{V}_1 \times \cdots \times \mathbb{V}_d$. In this case, we say that $T$ has an \emph{isotropic $d$-tuple} of $k$-dimensional subspaces. The Tur\'{a}n problem for $\Hom^d(\mathbb{F}^{n},  \mathbb{F}^m)$ is stated as follows. 
\begin{probl}[Tur\'{a}n problem for $\Hom^d(\mathbb{F}^{n},  \mathbb{F}^m)$]\label{probl:Turan}
Given a field $\mathbb{F}$ and $n,d,m \in \mathbb{N}$, find the largest number $k \coloneqq \alpha(\mathbb{F}, n, d,m)$ such that for each $T\in \Hom^d(\mathbb{F}^{n}, \mathbb{F}^m)$,  there exists $(\mathbb{V}_1,\dots,  \mathbb{V}_d) \in \Gr(k,  \mathbb{F}^n)^d$ such that $T|_{\mathbb{V}_1 \times \cdots \times  \mathbb{V}_d} = 0$.
\end{probl}
Problem~\ref{probl:Turan} is seemingly analogous to Problem~\ref{probl:TuranAlt},  yet substantially different: alternating multilinear maps are highly structured,  whereas general ones are not,  and such structure typically yields desirable properties.  For illustration,  we recall from \cite{chen2025isotropy} that the \emph{isotropy indices} of $T\in \Hom^d(\mathbb{F}^{n},  \mathbb{F}^m)$ and $S \in \Alt^d(\mathbb{F}^n,  \mathbb{F}^m)$ are defined as
\begin{align*}
\alpha (T) &\coloneqq \max \big\lbrace
s \in \mathbb{N}_0: T|_{\mathbb{V}_1 \times \cdots \times \mathbb{V}_d} = 0 \text{~for some~} (\mathbb{V}_1,\dots,  \mathbb{V}_d) \in \Gr(s,  \mathbb{F}^n)^d
\big\rbrace, \\ 
\alpha_{\w} (S) &\coloneqq \max \big\lbrace
s \in \mathbb{N}_0: S|_{\mathbb{V}} = 0 \text{~for some~} \mathbb{V} \in \Gr(s,  \mathbb{F}^{n})
\big\rbrace.  
\end{align*}
Clearly, $\alpha(\mathbb{F}, n, d, m) = \min_T \alpha(T)$ and $\alpha_{\w}(\mathbb{F},n,d,m) = \min_S \alpha_{\w} (S)$,  where $T$ and $S$ range over $\Hom^d(\mathbb{F}^{n},  \mathbb{F}^m)$ and $\Alt^d(\mathbb{F}^n,\mathbb{F}^m)$ respectively. By \cite[Remark~3.7]{chen2025isotropy},  $\alpha_{\w}$ is additive under direct sums,  while $\alpha$ is only superadditive.  Moreover,  the intersection-theoretic method of \cite{buhler1987isotropic} for \eqref{eq:BGH} relies on computations in the Chow ring $A^\ast(\Gr(k,\mathbb{F}^n))$.  By contrast,  the same technique for $\alpha(\mathbb{F}, n_1,  n_2,  m)$ requires computations in the more complicated ring $A^\ast(\Gr(k,\mathbb{F}^{n_1})) \otimes A^\ast(\Gr(k,\mathbb{F}^{n_2}))$. In Subsection~\ref{subsec:MultilinearTuran}, we establish the formula for $\alpha(\mathbb{F},n,d,m)$ when $\mathbb{F}$ is algebraically closed.

\begin{theorem}\label{main theorem-2}
Let $\mathbb{F}$ be an algebraically closed field. Given $n,d,m \in \mathbb{N}$, we have
\begin{enumerate}[(a)]
    \item\label{main theorem-2:item1} If $m \ge 2$, then $\alpha(\mathbb{F},n,d,m) = \max\big\{s\in\mathbb{N}_0: ds(n-s)\ge m s^d\big\}$.
    \item\label{main theorem-2:item2} If $m = 1$, then 
    \[
    \alpha(\mathbb{F},n,d,1) = \begin{cases}
    n - 1 \quad &\text{if~} d = 1, \\
    \lfloor \frac{n}{2} \rfloor \quad &\text{if~} d = 2, \\
    \max\big\{s\in\mathbb{N}_0: ds(n-s)\ge s^d\big\} \quad &\text{otherwise}.
    \end{cases}
    \]
\end{enumerate}
\end{theorem}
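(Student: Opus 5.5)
The plan is to study the incidence variety
\[
Z = \{(T,\mathbb{V}_1,\dots,\mathbb{V}_d) : T|_{\mathbb{V}_1\times\cdots\times\mathbb{V}_d}=0\} \subseteq \Hom^d(\mathbb{F}^n,\mathbb{F}^m)\times \Gr(s,\mathbb{F}^n)^d
\]
together with its two projections. The fibre of $Z$ over a tuple of subspaces is a linear subspace of codimension $ms^d$. Hence $Z$ is irreducible of dimension $mn^d + ds(n-s) - ms^d$.

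\textbf{Upper bound.} Suppose $ds(n-s) < ms^d$. Then $\dim Z < \dim \Hom^d(\mathbb{F}^n,\mathbb{F}^m)$, so the first projection is not dominant and a generic $T$ has no isotropic $d$-tuple of $s$-dimensional subspaces. This gives $\alpha(\mathbb{F},n,d,m)\le \max\{s: ds(n-s)\ge ms^d\}$. The map $s\mapsto ds(n-s)-ms^d$ is negative once it becomes negative for $s\ge 1$, because $s^{d-1}$ grows while $d(n-s)$ decreases. So the maximum is attained at an initial segment, and only monotonicity in $s$ is needed. The same bound holds for $m=1$.

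\textbf{Lower bound.} Assume $ds(n-s)\ge ms^d$. We need the first projection to be surjective. Since the projection is proper (the Grassmannians are projective), it suffices to show it is dominant. By the fibre-dimension theorem, this is equivalent to finding one point $(T,\mathbb{V}_\bullet)\in Z$ at which the fibre over $T$ has dimension exactly $ds(n-s)-ms^d$ near that point. Equivalently, the tangent space condition must hold: the differential of the map $\Gr(s)^d\to \Hom(\mathbb{V}_1\otimes\cdots\otimes\mathbb{V}_d,\mathbb{F}^m)$, $\mathbb{V}_\bullet\mapsto T|_{\mathbb{V}_\bullet}$, must be surjective at some isotropic point. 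Write $\mathbb{F}^n=\mathbb{V}_i\oplus W_i$ and take $\phi_i\in\Hom(\mathbb{V}_i,W_i)$. The differential sends $(\phi_1,\dots,\phi_d)$ to
\[
\sum_i T(v_1,\dots,\phi_i v_i,\dots,v_d).
\]
It depends only on the components of $T$ on $\mathbb{V}_1\otimes\cdots\otimes W_i\otimes\cdots\otimes \mathbb{V}_d$, and these components can be chosen freely. The problem therefore reduces to linear algebra. For $m\ge 2$, we must find tensors $A_i\in \Hom(\mathbb{V}_1\otimes\cdots\otimes W_i\otimes\cdots\otimes\mathbb{V}_d,\mathbb{F}^m)$ such that
\[
(\phi_i)_i \mapsto \sum_i A_i\circ\phi_i
\]
is surjective onto $\Hom(\mathbb{V}_1\otimes\cdots\otimes\mathbb{V}_d,\mathbb{F}^m)$ whenever $ds(n-s)\ge ms^d$. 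The cleanest route is to show that a \emph{generic} choice of the $A_i$ works. This amounts to a "generic matrix pencil has maximal rank" statement. I would prove it by exhibiting an explicit combinatorial choice: split the $ms^d$ target coordinates (indexed by $[m]\times[s]^d$) among the $d$ blocks. Block $i$ has $s(n-s)$ degrees of freedom, so the coordinates can be covered with each block hitting distinct coordinates via suitable elementary tensors, in the spirit of a Hall-type assignment.

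\textbf{Main obstacle.} I expect the hardest step to be this explicit maximal-rank construction, particularly its failure for $m=1$. There the count $ds(n-s)\ge s^d$ can hold while the differential is never surjective, because of hidden symmetry. For $d=2$ and $m=1$, a bilinear form has a left/right kernel structure and the answer is $\lfloor n/2\rfloor$, which I would prove directly. Take $\mathbb{V}_2$ inside the $T$-orthogonal of $\mathbb{V}_1$: this gives $\alpha\ge\lfloor n/2\rfloor$. For the upper bound, use that a nondegenerate form admits no isotropic pair with $2s>n$. The case $d=1$ is trivial, since the kernel of a linear functional has dimension $n-1$. For $m=1$ and $d\ge 3$, I would redo the tangent computation, using the extra freedom $n-s\ge s^{d-1}/d$, which is large, and check that no symmetry obstruction arises. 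If an explicit construction is awkward, I would fall back to an induction on $s$ or on $d$, obtaining a good isotropic point by taking direct sums of smaller witnesses. Superadditivity of $\alpha$ is only a guide here; the actual argument must control the dimension of $Z$.
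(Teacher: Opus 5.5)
\textbf{Gap: the lower bound.} Your upper bound is fine, and so is your treatment of $d=1$ and $(d,m)=(2,1)$; both agree with the paper. The gap is the lower bound, which is the substance of the theorem. You reduce it to surjectivity of $(\phi_i)_i\mapsto\sum_i A_i\circ\phi_i$ for some choice of the $A_i$, but you never prove this, and the construction you sketch cannot work in general. Curry $A_i$ to $B_i\colon W_i\to\Hom(\bigotimes_{k\ne i}\mathbb{V}_k,\mathbb{F}^m)$. Then the image of the $i$-th block is exactly $\mathbb{V}_i^\ast\otimes U_i$, where $U_i=\operatorname{im}B_i$ and $\dim U_i\le n-s$. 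So a block cannot hit an arbitrary set of $s(n-s)$ coordinates. With coordinate (elementary-tensor) choices, block $i$ covers a union of at most $n-s$ full lines of $[m]\times[s]^d$ in direction $i$.

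\textbf{Counterexample to the Hall-type assignment.} Take $d=2$, $m=3$, $n=5$, $s=2$. Here $ds(n-s)=12=ms^d$, so the theorem asserts that every such $T$ has an isotropic pair of planes. But $[3]\times[2]\times[2]$ cannot be covered by at most three lines in each of the two directions. Each $2\times 2$ layer needs two parallel lines, so by pigeonhole one direction needs four. Hence no Hall-type assignment exists. You would need a genuinely non-monomial choice of the $U_i$, together with a proof that $\sum_i\mathbb{V}_i^\ast\otimes U_i$ is the whole space. That is an open-ended maximal-rank problem you have not solved, and for $m=1$, $d\ge 3$ you give nothing concrete.

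\textbf{Positive characteristic.} Your second ``equivalently'' fails in characteristic $p$. Expected local fibre dimension does not force a reduced fibre, and a dominant map can have no point with surjective differential (Frobenius is the basic example). So the tangent criterion is only sufficient. Generic smoothness cannot supply the witness, and any explicit witness would have to be built characteristic-free.

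\textbf{How the paper proceeds.} The paper sidesteps all of this with a second-moment dimension count. Let $k$ be the claimed value and suppose $W\ne\mathbb{P}(\Hom^d(\mathbb{F}^n,\mathbb{F}^m))$, so $\dim W\le mn^d-2$. Fibres of $J_2\to W$ are squares of fibres of $J_1\to W$, which gives $\dim J_2\ge 2\dim J_1-\dim W\ge 2dk(n-k)+mn^d-2mk^d$. Now stratify $J_2$ by $\ell_i=\dim(\mathbb{U}_i\cap\mathbb{V}_i)$ and use $\dim\Sigma_\ell=2k(n-k+\ell)-\ell(n+\ell)$. This yields $\dim J_2\le 2dk(n-k)+mn^d-2mk^d-1$ provided $m\prod_i\ell_i\le\sum_i\ell_i(n+\ell_i-2k)$. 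That inequality follows from $dk(n-k)\ge mk^d$, $n\ge 2k$ and AM--GM, giving a contradiction. The hypothesis ($m\ge 2$ or $d\ge 3$) is used only to guarantee $n\ge 2k$, which fails exactly for $(d,m)=(2,1)$ --- the obstruction you correctly identified. The argument is characteristic-free and never requires exhibiting a transversal point.
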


Theorem~\ref{main theorem-2} is closely connected to the classical problem of determining the maximal dimension of a completely entangled subspace \cite{bennett99,demianowicz21,parthasarathy04,Wallach02}. More precisely, a subspace of $(\mathbb{C}^n)^{\otimes d}$ is called \emph{completely entangled} if it contains no nonzero decomposable tensor. The natural isomorphism $\Hom^d(\mathbb{C}^n,\mathbb{C}^m) \cong \Hom((\mathbb {C}^n)^{\otimes d}, \mathbb{C}^m)$ implies that every $T\in \Hom^d(\mathbb{C}^n,\mathbb{C}^m)$ corresponds to a linear map $\widetilde T:(\mathbb C^n)^{\otimes d}\to\mathbb C^m$, and $\alpha(T)=0$ if and only if $\ker (\widetilde T)$ is completely entangled. Consequently, if $(\mathbb{C}^n)^{\otimes d}$ has a completely entangled subspace of dimension $N$, then $\alpha(\mathbb{C},n,d,n^d-N) = 0$ and Theorem~\ref{main theorem-2} recovers the classical bound $N \le n^d-d(n-1)-1$ established in \cite{parthasarathy04,Wallach02}.

\subsection{Applications}
We briefly discuss some applications of Theorems~\ref{main theorem-1} and \ref{main theorem-2}.  
\subsubsection{Feldman-Propp Number} The first application is concerned with the \emph{Feldman-Propp number} \cite{FP92,qiao2020tur}  defined as 
\[
\FP(\mathbb{F},d,m,k) \coloneqq \min \{ n \in \mathbb{N}: \alpha_{\w}(T) \ge k \text{~for all~}T \in \Alt^d(\mathbb{F}^n,\mathbb{F}^m) \}.
\]
In Section~5 of \cite{FP92},  a lower bound for $\FP(\mathbb{F},d,m,k)$ was established: 
\begin{equation}\label{eq:FP}
\FP(\mathbb{F},d,m,k)  \ge \Big\lceil \frac{m}{k} \binom{k}{d} \Big\rceil + k,
\end{equation}
and the authors stated without proof that there is an upper bound that is not far from the lower bound when $\mathbb{F}$ is algebraically closed with $\ch (\mathbb{F}) = 0$.  By definition,  we have $\alpha_{\w}(\mathbb{F},  n,d,m) \ge k$ if and only if $\FP(\mathbb{F},d,m,k) \le n$.  As a direct consequence of Theorem~\ref{main theorem-1},  we obtain that the equality in \eqref{eq:FP} almost always holds when $\mathbb{F}$ is algebraically closed.
\begin{cor}
If $\mathbb{F}$ is algebraically closed, then
\[
\FP(\mathbb{F},d,m,k)  = 
\begin{cases}
2k - 1 \quad &\text{if~} d = 2 \text{~and~} m = 1, \\
8 \quad &\text{if~} (d,m,k) = (3,1,5), \\
d+3\quad &\text{if~} m = 1, k = d+1 \text{~and~} d\ge 4 \text{~is even}, \\
\lceil \frac{m}{k} \binom{k}{d} \rceil + k \quad &\text{otherwise}.
\end{cases}
\]
\end{cor}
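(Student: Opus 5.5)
The plan is to translate Theorem~\ref{main theorem-1} through the equivalence $\FP(\mathbb{F},d,m,k)\le n \iff \alpha_{\w}(\mathbb{F},n,d,m)\ge k$. This gives
\[
\FP(\mathbb{F},d,m,k)=\min\{n\in\mathbb{N}:\alpha_{\w}(\mathbb{F},n,d,m)\ge k\},
\]
which makes sense because $\alpha_{\w}(\mathbb{F},n,d,m)$ is nondecreasing in $n$: restrict $T\in\Alt^d(\mathbb{F}^{n+1},\mathbb{F}^m)$ to $\mathbb{F}^n$. We also assume $k\ge d$ (for $k<d$ the definition is degenerate). The generic case is $m\ge2$, or $m=1$ outside the exceptions. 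Here Theorem~\ref{main theorem-1} says $\alpha_{\w}\ge k$ if and only if some $s\ge k$ satisfies $s(n-s)\ge m\binom{s}{d}$. The first key step is to show this is equivalent to the condition at $s=k$ itself, namely $n\ge k+\frac{m}{k}\binom{k}{d}$. Writing the condition as $n\ge s+\frac{m}{s}\binom{s}{d}$, it suffices that $f(s)=s+\frac{m}{s}\binom{s}{d}$ is nondecreasing in $s$ for $s\ge d$. We have
\[
\frac{1}{s}\binom{s}{d}=\frac{1}{d}\binom{s-1}{d-1},
\]
which is nondecreasing, and adding $s$ makes $f$ strictly increasing. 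So the minimal $n$ is $k+\lceil \frac{m}{k}\binom{k}{d}\rceil$, using that $n-k$ is an integer.

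The exceptional cases with $m=1$ are handled one at a time.
\begin{itemize}
\item For $d=2$ we have $\alpha_{\w}=\lceil n/2\rceil\ge k$ if and only if $n\ge 2k-1$.
\item For $(d,n)=(3,7)$ we have $\alpha_{\w}=4$, whereas the generic formula would give $\max\{s: s(7-s)\ge\binom{s}{3}\}$. At $s=5$ this reads $10\ge10$, so the generic value would be $5$ and the exception actually drops the value. Hence for $k=5$ the minimal $n$ is not $7$. We check that $n=8$ works via the generic formula, since $5\cdot 3=15\ge10$ and $(3,8)$ is not exceptional. So $\FP=8$, while the generic value $\lceil 10/5\rceil+5=7$ fails. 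For other $k$, the value $n=7$ affects only $k\le 4$ versus $k=5$; for $k\le4$ the generic minimal $n$ is at most $7$, and we check it is unchanged.
\item For $d=n-2$ even we have $\alpha_{\w}=n-2=d$. The generic formula with $k=d+1$ needs $(d+1)(n-d-1)\ge d+1$, that is, $n\ge d+2$. At $n=d+2$, however, the exception gives $\alpha_{\w}=d<d+1$. So the minimum becomes $d+3$, which is non-exceptional and works since $2(d+1)\ge d+1$. For $d\ge4$ even this yields $\FP=d+3$. For $d=2$ this case coincides with the $d=2$ row, so it is excluded there.
\end{itemize}

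The main obstacle is bookkeeping. We must verify that each exceptional $n$ changes $\FP$ only for the listed $(d,m,k)$ and for no other $k$. Concretely, for each exception we compare the exceptional value with the generic $\max$ at that $n$, determine which thresholds $k$ cross it, and confirm that the next $n$ is non-exceptional and attains $k$. The monotonicity of $f$ established in the first step does the rest.
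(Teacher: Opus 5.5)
Your proposal is correct and follows the paper's route. The paper states the corollary as a direct consequence of Theorem~\ref{main theorem-1} via $\FP(\mathbb{F},d,m,k)\le n \iff \alpha_{\w}(\mathbb{F},n,d,m)\ge k$. Your monotonicity of $s+\frac{m}{s}\binom{s}{d}$, together with the check that the exceptional pairs $(d,n)=(3,7)$ and $(d,n)=(d,d+2)$ with $d\ge 4$ even each lower $\alpha_{\w}$ by one and so shift only $k=5$ and $k=d+1$, supplies exactly the details the paper leaves implicit.

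The restriction $k\ge d$ is unnecessary. The identity $\frac{1}{s}\binom{s}{d}=\frac{1}{d}\binom{s-1}{d-1}$ holds for all $s\ge 1$, so the same argument gives $\FP=k$ for $1\le k<d$, in agreement with the stated formula.
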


\subsubsection{Tur\'{a}n Number and Gow-Quinlan Number}
The \emph{Tur\'{a}n number} for alternating multilinear maps is defined in  \cite[Section~5.4]{qiao2020tur} as a direct generalization of the classical Tur\'{a}n number for hypergraphs.  It is defined as 
\[
\T(\mathbb{F},n,d,k) \coloneqq \min \lbrace
r \in \mathbb{N}: \alpha_{\w}(\mathbb{F},n,d,r) \le k
\rbrace.  
\]
Since $n \ge \alpha_{\w}(\mathbb{F},n,d,r) \ge \min\{ d-1,  n\}$,  we may consider the \emph{Gow-Quinlan number},  given by
\[
\GQ(\mathbb{F},n,d) = \T(\mathbb{F},n,d,d-1) \coloneqq \min
\lbrace
r \in \mathbb{N}:  \alpha_{\w}(\mathbb{F},n,d,r) = d-1
\rbrace.
\]
The Gow-Quinlan number was first investigated in \cite{GQ06} for $d = 2$.  From \eqref{eq:BGH},  Gow and Quinlan observed  that $\GQ(\mathbb{F},n,2) =  2n - 3$ when $\mathbb{F}$ is algebraically closed and $\ch (\mathbb{F}) \ne 2$. As an immediate consequence of Theorem~\ref{main theorem-1},  we determine the values of both the Tur\'{a}n number and the Gow-Quinlan number over algebraically closed fields for arbitrary $d \ge 2$.
\begin{cor}
Let $\mathbb{F}$ be an algebraically closed field.  For any positive integers $n,k$ and $d \ge 2$ such that $2 \le d \le n$ and $d - 1 \le k \le n$, we have 
\[
\T(\mathbb{F},n,d,k) = \max \bigg\lbrace 0, \bigg\lfloor  
\frac{(k+1)(n - k - 1)}{\binom{k+1}{d}} \bigg\rfloor \bigg\rbrace + 1
\]
with three exceptional cases: $d = 2$ and $\lceil n / 2 \rceil \le k$; $(d,n) = (3,7)$ and $4 \le k$; $d = n-2$,  $n$ is even and $n - 2 \le k$, in which $\T(\mathbb{F},n,d,k) = 1$. In particular,  for any algebraically closed field $\mathbb{F}$ and positive integers $n$,  $d$,  it holds that $\GQ(\mathbb{F},n,d) = \max\{0,  d(n - d )\} + 1$.
\end{cor}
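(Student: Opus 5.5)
This corollary follows from Theorem~\ref{main theorem-1} once we fix $k$ and read off, as a function of $r$, when $\alpha_{\w}(\mathbb{F},n,d,r)\le k$. The argument runs in four steps.

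\textbf{Step 1: monotonicity in $r$.} Embedding $\mathbb{F}^r\subseteq\mathbb{F}^{r+1}$ as the first coordinates shows that $r\mapsto\alpha_{\w}(\mathbb{F},n,d,r)$ is non-increasing. Conversely, composing with a projection onto $\mathbb{F}^r$ does not decrease isotropy. Hence $\T(\mathbb{F},n,d,k)$ is the first $r$ at which the value drops to at most $k$.

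\textbf{Step 2: the generic regime, $r\ge 2$ and the non-exceptional $r=1$ case.} By Theorem~\ref{main theorem-1}, $\alpha_{\w}(\mathbb{F},n,d,r)=\max\{s: s(n-s)\ge r\binom{s}{d}\}$. Define
\[
f(s)=s(n-s)-r\binom{s}{d},
\]
and note that $f(s)\ge0$ for all $s\le d-1$. I first check that $f(s)\ge0$ is downward closed for $s\ge d$ in the relevant range. Dividing by $\binom{s}{d}$ gives
\[
\frac{s(n-s)}{\binom{s}{d}}\ge r,
\]
and the left-hand side is decreasing in $s$ for $d\le s\le n$. Indeed, the ratio of consecutive values is
\[
\frac{(s+1)(n-s-1)}{s(n-s)}\cdot\frac{s+1-d}{s+1}=\frac{(n-s-1)(s+1-d)}{s(n-s)}<1.
\]
So $\alpha_{\w}\le k$ holds iff $s=k+1$ fails, that is, iff
\[
r>\frac{(k+1)(n-k-1)}{\binom{k+1}{d}}.
\]
The smallest such $r\in\mathbb{N}$ is $\max\{0,\lfloor (k+1)(n-k-1)/\binom{k+1}{d}\rfloor\}+1$. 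The $\max$ with $0$ handles $k+1>n$, where the numerator is negative. When $k+1<d$ the binomial coefficient vanishes, but the hypothesis $k\ge d-1$ excludes this.

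\textbf{Step 3: the case $r=1$ under the exceptions.} When the formula gives $r=1$ the answer is $1$ regardless of which case of Theorem~\ref{main theorem-1}\ref{main theorem-1:item2} applies, so I only need to address when $r=1$ differs. For $r=1$, Theorem~\ref{main theorem-1}\ref{main theorem-1:item2} gives the value $\lceil n/2\rceil$, $4$, or $n-2$ in the three exceptional families. There $\alpha_{\w}(\mathbb{F},n,d,1)\le k$ holds exactly under the stated conditions ($k\ge\lceil n/2\rceil$, $k\ge4$, $k\ge n-2$ respectively), so $\T=1$. The remaining obstacle is to confirm that in all other situations with $r=1$, or whenever $r\ge2$ is needed, the generic formula of Step 2 applies. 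This holds because part (a) covers every $r\ge2$, and part (b) agrees with the generic max formula outside the exceptions. I also need to verify that in the exceptional families with $k$ below the threshold, the formula value is still correct. For example, with $d=2$ and $k<\lceil n/2\rceil$, the first $r$ with drop is at least $2$ (since $r=1$ gives $\lceil n/2\rceil>k$), and from $r\ge2$ on part (a) governs. But Step 2's formula implicitly uses the generic value at $r=1$, which may differ. So I must check that the generic inequality already fails at $s=k+1$ for $r=1$, that is,
\[
\frac{(k+1)(n-k-1)}{\binom{k+1}{d}}\ge1
\]
in those ranges. This holds because in each exceptional family the exceptional value exceeds the generic one, so the generic condition for $s=k+1$ holds at $r=1$. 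I expect this consistency check to be the fiddliest step.

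\textbf{Step 4: the Gow--Quinlan number.} Take $k=d-1$. Then $\binom{d}{d}=1$ and $(k+1)(n-k-1)=d(n-d)$, so $\GQ=\max\{0,d(n-d)\}+1$. The exceptions do not interfere: when $d=2$ we have $k=1<\lceil n/2\rceil$ for $n\ge2$ (and for $n=2$ both sides give $1$); similarly $4>2$ in the $(3,7)$ case, and $n-2>d-1$ in the third family.
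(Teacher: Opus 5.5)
Your overall route matches the paper's, which gives no separate proof and treats the corollary as an immediate consequence of Theorem~\ref{main theorem-1}. Steps~1, 2 and~4 are sound. Since $s(n-s)/\binom{s}{d}$ decreases on $[d,n]$, the set $\{s: s(n-s)\ge r\binom{s}{d}\}$ is downward closed. So for the generic value $G(r)=\max\{s\in\mathbb{N}_0: s(n-s)\ge r\binom{s}{d}\}$ you get $G(r)\le k$ iff $r\binom{k+1}{d}>(k+1)(n-k-1)$, which yields the floor formula.

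\textbf{The gap is in Step~3.} Let $e$ be the exceptional value, namely $\lceil n/2\rceil$, $4$, or $n-2$. You correctly reduce the case $d-1\le k<e$ to showing $G(1)\ge k+1$. But your justification, that the exceptional value exceeds the generic one, is false; the inequality actually goes the other way.
\begin{itemize}
\item For $(d,n)=(3,7)$ one has $G(1)=5>4$.
\item For $d=n-2$ one has $G(1)=n-1>n-2$.
\item For $d=2$ one has $G(1)=\lfloor(2n+1)/3\rfloor\ge\lceil n/2\rceil$, with equality only for $n\in\{2,3,5\}$.
\end{itemize}
The exceptions exist precisely because special forms have a \emph{smaller} isotropy index than the dimension count predicts. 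Moreover, your premise $e>G(1)$ would not give what you need. It leaves room for $G(1)\le k<e$, where the floor formula returns $1$ even though $\alpha_{\w}(\mathbb{F},n,d,1)=e>k$ forces $\T(\mathbb{F},n,d,k)\ge 2$.

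\textbf{The fix.} You need the inequality $e\le G(1)$, which holds for every $(n,d)$. It follows from the Feldman--Propp dimension count $\alpha_{\w}(\mathbb{F},n,d,1)\le G(1)$. This is the first upper bound invoked in the proof of Theorem~\ref{main theorem-1b}, and it is the same argument that gives $\alpha_{\w}\le k_0$ at the start of the proof of Theorem~\ref{main theorem-1a}, which does not use $m\ge2$. Alternatively, it follows from the three explicit comparisons above.

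With this, $k<e\le G(1)$ gives $k+1\le G(1)$. By downward closedness, $(k+1)(n-k-1)\ge\binom{k+1}{d}$, so the floor formula already excludes $r=1$. Hence $\min\{r\ge1: G(r)\le k\}=\min\{r\ge2: \alpha_{\w}(\mathbb{F},n,d,r)\le k\}=\T(\mathbb{F},n,d,k)$. The last equality uses Theorem~\ref{main theorem-1}(a) for $r\ge2$ and the fact that $\alpha_{\w}(\mathbb{F},n,d,1)=e>k$. After this correction your argument is complete.

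A small slip in Step~4: the inequality $1<\lceil n/2\rceil$ requires $n\ge3$, not $n\ge2$, though you do handle $n=2$ separately.
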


\subsubsection{Barrier for the Multilinear Method in the Erd\H{o}s Box Problem}\label{subsubsec:Erdos}
The famous Erd\H{o}s box problem \cite{1964Israel} asks for the value of the extremal number 
\[
\ex_d(N,  \mathcal{K}^{(d)}_{2,\dots,  2}) = \max \big\lbrace |E(\mathcal{G})|: \mathcal{G} \text{~is an $N$-vertex, $d$-uniform hypergraph not containing $\mathcal{K}^{(d)}_{2,\dots,  2}$} \big\rbrace,  
\]
where $\mathcal{K}^{(d)}_{2,\dots,  2}$ is a complete $d$-partite $d$-uniform hypergraph with each part of size two.  By a multilinear method, Conlon,  Pohoata and Zakharov \cite{CPZ20} proved that 
\begin{equation}\label{eq:CPZ}
\ex_d(N,  \mathcal{K}^{(d)}_{2,\dots,  2}) = \Omega (N^{d - \frac{m}{n}})
\end{equation} 
for any positive integers $m,n$ such that {$d(n - 1) < (2^d - 1)m$, which refined earlier constructions of Gunderson, R\"{o}dl, and Sidorenko based on random hyperplanes~\cite{1999JCTA}. The last application is a barrier result for this method,  which we first reformulate in purely algebraic terms.  

Let $d \ge 2$ be an integer and let $\mathscr{H}_d$ be the set of $d$-partite $d$-uniform hypergraphs. According to Bukh's random algebraic construction \cite{Bukh15,  Bukh24},  there is a map
\begin{equation}\label{eq:Erdox}
\mathcal{G}: \bigsqcup_{\substack{n,m \in \mathbb{N},\\ q \text{: prime power}}} \Hom^d (\mathbb{F}_q^{n+1},  \mathbb{F}_q^m) \to \mathscr{H}_d,\quad \mathcal{G}(T)  = (V_T,  E_T)
\end{equation}
where $V_T$ is the disjoint union of $d$ copies of $\mathbb{P}^n(\mathbb{F}_q)$,  and $E_T$ consists of all $([v_1],\dots,  [v_d]) \in (\mathbb{P}^n(\mathbb{F}_q) )^d$ such that $T(v_1,\dots,  v_d) = 0$. 

A key observation is that hyperedges of each $\mathcal{K}^{(d)}_{2,\dots,  2}$ are contained in some isotropic $d$-tuple of two-dimensional subspaces. Thus, deleting all hyperedges corresponding to isotropic $d$-tuples of two-dimensional subspaces of $T$ yields a hypergraph $\mathcal{G}'(T)$ without $\mathcal{K}^{(d)}_{2,\dots, 2}$.  A lower bound for $\ex_d(N,  \mathcal{K}^{(d)}_{2,\dots,  2})$ follows from an upper bound for $|D_T|$, where $D_T \coloneqq\{( \mathbb{V}_{1},\dots,\mathbb{V}_{d})\in \Gr(2,  \mathbb{F}_q^{n+1})^{d}: T|_{\mathbb{V}_{1}\times \cdots \times \mathbb{V}_{d}}=0\}$.  The lower bound~\eqref{eq:CPZ} was proved in~\cite{CPZ20} via the random multilinear method:
one samples a random multilinear map $T$ from an appropriate finite model, estimates the expectation $\mathbb{E}|D_T|$,
and then deletes the hyperedges coming from bad configurations to obtain a $K^{(d)}_{2,\ldots,2}$-free hypergraph with many edges. 

In Appendix~\ref{sec:append}, we give a purely algebraic proof of~\eqref{eq:CPZ} using analytic rank and partition rank, notions originating in additive combinatorics. In particular, our argument yields a deterministic upper bound on $|D_{T}|$ for each admissible multilinear map $T$, rather than a upper bound on $\mathbb{E}|D_{T}|$, which may be of independent interest in Tur\'{a}n-type problems. Moreover, we show that there is an obstruction to improving the exponent in~\eqref{eq:CPZ} within this multilinear framework.

\begin{cor}\label{cor:Erdox}
There is a function $C_1: \mathbb{N} \times \mathbb{N} \times \mathbb{N} \to \mathbb{N}$ with the following property. For any prime power $q$ and positive integers $n,d, m,r$ such that $d\ge 3$, $2^{d-1}m \le d(n-1) < (2^d - 1)m$ and $r$ is divisible by $C_1(n,d,m)$, there is no $T \in \Hom^d(\mathbb{F}_{q}^{n+1}, \mathbb{F}_{q}^m)$ such that $\vert D_T(\mathbb{F}_{q^r})\vert \le 1/2 q^{2r(d(n-1)-m2^{d-1})}$.
\end{cor}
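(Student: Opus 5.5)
Consider the incidence variety $X_T \coloneqq \{(\mathbb{V}_1,\dots,\mathbb{V}_d)\in \Gr(2,\mathbb{F}^{n+1})^d : T|_{\mathbb{V}_1\times\cdots\times\mathbb{V}_d}=0\}$. It is defined over $\mathbb{F}_q$, and $D_T(\mathbb{F}_{q^r}) = X_T(\mathbb{F}_{q^r})$. Over $\overline{\mathbb{F}}_q$, $X_T$ is the zero locus of a section of the bundle $\mathcal{Q} \coloneqq (\mathcal{S}_1^\vee\boxtimes\cdots\boxtimes\mathcal{S}_d^\vee)^{\oplus m}$ on $\Gr(2,n+1)^d$. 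Here $\mathcal{S}_i$ is the tautological rank-$2$ bundle on the $i$-th factor, so $\mathcal{Q}$ has rank $m2^d$. The ambient variety has dimension $2d(n-1)$. Hence every irreducible component of $X_T$ has dimension at least
\[
e \coloneqq 2d(n-1) - m2^d = 2\bigl(d(n-1)-m2^{d-1}\bigr) \ge 0 ,
\]
provided $X_T\neq\emptyset$.

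\textbf{Step 1: $X_T$ is nonempty.} The top Chern class $c_{m2^d}(\mathcal{Q})$ should be nonzero in the Chow ring of $\Gr(2,n+1)^d$ in the range $2^{d-1}m\le d(n-1)$. Equivalently, the degree of this class against a suitable power of the ample class is a positive integer. This is the same kind of computation that gives the positivity part of Theorem~\ref{main theorem-2}\ref{main theorem-2:item1} with $s=2$, since the hypothesis is exactly $ds(n-s+1)\ge ms^d$ for ambient dimension $n+1$. I would invoke that argument, or the nonemptiness statement established in Subsection~\ref{subsec:MultilinearTuran}, to conclude $X_T(\overline{\mathbb{F}}_q)\ne\emptyset$ for every $T$. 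It then follows that some irreducible component $Z$ of $X_T$ has dimension $\ge e$.

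\textbf{Step 2: find a geometrically irreducible component defined over a small extension.} The components of $X_T$ are defined over $\mathbb{F}_{q^{j}}$ for some $j$ at most the number of components. That number is bounded in terms of the degree of $X_T$, hence by a constant depending only on $(n,d,m)$: use Bézout-type bounds in a fixed Plücker embedding, since $X_T$ is cut out by $m2^d$ equations of bounded degree. Take $C_1(n,d,m)$ to be the lcm of $1,\dots,B(n,d,m)$, where $B$ bounds the number of components. Then, for $C_1\mid r$, each component $Z$ is stable under the $q^r$-Frobenius, so it is a geometrically irreducible variety defined over $\mathbb{F}_{q^r}$.

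\textbf{Step 3: count points on $Z$.} This is the main obstacle, and the hardest quantitative step. Apply the Lang--Weil estimate:
\[
|Z(\mathbb{F}_{q^r})| \ge q^{r\dim Z} - c(n,d,m)\,q^{r(\dim Z - 1/2)}.
\]
Here the constant $c$ depends only on the degree and embedding dimension, which are bounded by $(n,d,m)$. This gives $|Z(\mathbb{F}_{q^r})|\ge \tfrac12 q^{re}$ once $q^{r}$ exceeds a constant depending on $(n,d,m)$. To remove the largeness condition, I would enlarge $C_1$: for instance, multiply it by a factor $L(n,d,m)$ such that $2^{L}$ exceeds $4c^2$. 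This makes $q^r$ large for every prime power $q$. If the error term is delicate, an alternative is the uniform bound of Lachaud--Rolland for varieties of given degree.

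Since $\dim Z\ge e$, we conclude $|D_T(\mathbb{F}_{q^r})|\ge |Z(\mathbb{F}_{q^r})| > \tfrac12 q^{r e}$. The exponent $re$ is exactly $2r(d(n-1)-m2^{d-1})$ in the statement, which contradicts the assumed bound. The restriction $d\ge3$ and the upper bound $d(n-1)<(2^d-1)m$ are not needed for this argument beyond fixing the regime of \eqref{eq:CPZ}.
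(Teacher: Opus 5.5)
Your proposal is correct and follows essentially the paper's route: nonemptiness of $D_T$ from Theorem~\ref{main theorem-2} with $s=2$ in ambient dimension $n+1$; the bound $\dim D_T\ge 2d(n-1)-m2^d$, which you get from the rank-$m2^d$ bundle via Krull and the paper gets from the fibre dimension of $J_1\to\mathbb{P}(\Hom^d(\overline{\mathbb{F}}_q^{n+1},\overline{\mathbb{F}}_q^m))$; components defined over an extension of bounded degree, where your Frobenius-orbit plus B\'ezout argument replaces the paper's appeal to Chistov's algorithm; and Lang--Weil with $r$ forced large. One correction to your closing aside: $d\ge 3$ is genuinely used, because for $m=1$ Theorem~\ref{main theorem-2} needs $d\ge 3$, and $(d,m,n)=(2,1,2)$ satisfies $2^{d-1}m\le d(n-1)<(2^d-1)m$ while a nondegenerate bilinear form on $\mathbb{F}_q^3$ has no isotropic pair of $2$-planes (so your Chern-class claim also fails there).
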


An important implication of Corollary~\ref{cor:Erdox} is that any direct improvement of \eqref{eq:CPZ} by the aforementioned multilinear method, if not impossible, can only occur over small fields. The proof of Corollary~\ref{cor:Erdox} makes use of constructions from the proof of Theorem~\ref{main theorem-2}. Therefore, we defer it to Appendix~\ref{sec:append}.

\subsection{Organization}
The rest of this paper is organized as follows. In Section~\ref{sec:prelim},  we record some basic lemmas in algebraic geometry, for ease of reference.  Theorem~\ref{main theorem-1} is proved in Subsection~\ref{subsec:alternatingTuran},  and Subsection~\ref{subsec:MultilinearTuran} is devoted to the proof of Theorem~\ref{main theorem-2}. In Appendix~\ref{sec:append},  we provide an algebraic proof of \eqref{eq:CPZ} and the complete proof of Corollary~\ref{cor:Erdox}.

\section{Basic Facts}\label{sec:prelim}
We collect some basic facts from algebraic geometry below.   
\begin{lemma}{\normalfont\cite[Theorem~11.14]{harris2013algebraic}}\label{thm:irreducible_fibre}
Let $\pi:X\to Y$ be a surjective morphism of varieties. If $Y$ is irreducible and for each $y\in Y$, the fibre $\pi^{-1}(y)$ is irreducible and of the same dimension,  then $X$ is irreducible. 
\end{lemma}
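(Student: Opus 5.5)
The plan is to show first that $X$ has exactly one irreducible component dominating $Y$, and then that no other component can exist, because it would be too small to cover any fibre. Write $n \coloneqq \dim \pi^{-1}(y)$, which by hypothesis does not depend on $y$, and let $X = X_1 \cup \cdots \cup X_r$ be the decomposition into irreducible components. Each restriction $\pi|_{X_i}: X_i \to Y$ is a morphism of irreducible varieties, so the theorem on the dimension of fibres (upper semicontinuity of fibre dimension, as in \cite{harris2013algebraic}) applies to it. For each $m$, the set
\[
Y_{i,m} \coloneqq \{ y \in Y : \dim (\pi^{-1}(y) \cap X_i) \ge m \}
\]
is closed in $Y$.

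The key step is to compare $\pi^{-1}(y)$ with its pieces $\pi^{-1}(y)\cap X_i$. Since $\pi^{-1}(y) = \bigcup_i (\pi^{-1}(y)\cap X_i)$ is irreducible, it is contained in, and hence equal to, one of these pieces. So for every $y$ there is some $i$ with $\pi^{-1}(y) \subseteq X_i$, and then $\dim(\pi^{-1}(y)\cap X_i) = n$. It follows that $Y = \bigcup_i Y_{i,n}$. Because $Y$ is irreducible and each $Y_{i,n}$ is closed, some $Y_{i,n}$ equals $Y$; relabel so that $Y_{1,n} = Y$. Then every fibre meets $X_1$ in an $n$-dimensional closed subset of the $n$-dimensional irreducible fibre. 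Such a subset must be the whole fibre, so $\pi^{-1}(y) \subseteq X_1$ for all $y$. Since $X$ is the union of its fibres ($\pi$ being defined on all of $X$), we get $X = X_1$, which is irreducible.

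The hard part is justifying the upper semicontinuity step for $\pi|_{X_i}$, namely that $Y_{i,m}$ is closed even though $\pi|_{X_i}$ need not be surjective. I would handle this by applying the semicontinuity theorem on $X_i$ itself, which shows that $\{x \in X_i : \dim_x (\pi|_{X_i})^{-1}(\pi(x)) \ge m\}$ is closed in $X_i$. I would then push this closed set forward to $Y$, which requires $\pi$ to be closed. This holds if $\pi$ is proper, and in \cite{harris2013algebraic} varieties are quasi-projective, so the statement is meant for projective (or proper) morphisms, which is the setting of the cited theorem. If properness is not available, I would instead use Chevalley constructibility together with a generic-point argument. Comparing dimensions through $\dim X_i = \dim \overline{\pi(X_i)} + (\text{generic fibre dimension})$ shows that a component not containing a general fibre has dimension below $\dim Y + n$, and so cannot account for the fibres over a dense set. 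This route needs more care, so I would state the lemma for projective (closed) morphisms, as in the cited source, and check that this is how it is used later.
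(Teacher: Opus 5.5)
Your argument is correct and is essentially the standard proof of the cited result, Theorem~11.14 of \cite{harris2013algebraic}, which is stated there for regular maps of projective varieties and which the paper quotes without proof. Each irreducible fibre lies in some component $X_i$, the loci $Y_{i,n}$ are closed because $\pi|_{X_i}$ is a closed map, and hence the irreducible $Y$ equals one of them and $X=X_1$.

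Your restriction to closed (projective) morphisms is not just convenient but necessary. The Chevalley fallback therefore cannot be made to work: $X=\{(x,0): x\neq 0\}\cup\{(0,1)\}\subset\mathbb{A}^2$ with $\pi(x,y)=x$ surjects onto $\mathbb{A}^1$ with every fibre a single point, yet $X$ is reducible (here $Y_{1,0}=\mathbb{A}^1\setminus\{0\}$ is not closed). The paper's statement, which omits the projectivity hypothesis of the cited source, is only valid in your restricted form. Fortunately, the paper applies it only to $I_1\to\Gr(k,\mathbb{F}^n)$ and $J_1\to\Gr(k,\mathbb{F}^n)^d$, which are morphisms of projective varieties.
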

\begin{lemma}[Fiber dimension formula]\cite[Theorem~11.12]{harris2013algebraic} \& \cite[Proposition 10.6.1(iii)]{grothendieck1966elements} \label{dimension of fibre}
    Let $\pi:X\to Y$ be a morphism of varieties.
    \begin{enumerate}[(a)]
        \item If $X$ is an irreducible quasi-projective variety, $Y$ is an irreducible projective variety and $\pi$ is dominant, then $\dim \pi^{-1}(y) = \dim X -\dim Y $  for generic $y\in Y$.
        \item If for any $y\in Y$, $\dim \pi^{-1}(y) \ge r$(resp. $\dim \pi^{-1}(y) \le r$), then $\dim X \ge r+\dim Y $(resp.  $\dim X \le r+\dim Y $)  
    \end{enumerate}
\end{lemma}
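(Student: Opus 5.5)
Since this lemma is quoted from \cite{harris2013algebraic} and \cite{grothendieck1966elements}, the plan is to recover it from two standard facts: dimension equals transcendence degree of the function field, and Noether normalization over a base. For part (b), Y is assumed irreducible, as in all uses in the paper.

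\textbf{Step 1: dimension count from function fields.} Everything is local on $Y$ and $X$, so I first replace $Y$ and $X$ by dense affine opens $\operatorname{Spec} B \subset Y$ and $\operatorname{Spec} A \subset \pi^{-1}(\operatorname{Spec} B)$. Dominance makes $B \to A$ injective, so $K(Y)\subset K(X)$. Set $e=\operatorname{trdeg}_{K(Y)}K(X)=\dim X-\dim Y$.

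\textbf{Step 2: generic fibres have dimension $e$ (part (a)).} Choose $t_1,\dots,t_e\in A$ that are algebraically independent over $K(Y)$. Every generator of $A$ is then algebraic over $K(Y)(t_1,\dots,t_e)$.
\begin{itemize}
\item Clearing the finitely many denominators in the leading coefficients of the minimal equations gives some $0\ne f\in B$ such that $A_f$ is finite over $B_f[t_1,\dots,t_e]$. This is Noether normalization over a domain.
\item After shrinking $f$ further, generic freeness makes $A_f$ free (hence faithfully flat) over $B_f[t_1,\dots,t_e]$. So $\operatorname{Spec}A_f\to D(f)\times\mathbb{A}^e$ is finite and surjective.
\item Each fibre over $y\in D(f)$ is therefore finite and surjective onto $\mathbb{A}^e$, so it has dimension exactly $e$.
\end{itemize}
This proves (a). Projectivity of $Y$ plays no role here.

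\textbf{Step 3: every component of every nonempty fibre has dimension at least $e$.} Take a finite surjection from an affine neighbourhood of $y$ onto $\mathbb{A}^{\dim Y}$. Locally, $\pi^{-1}(y)$ is then a component of the zero set of $\dim Y$ functions on $X$. Krull's principal ideal theorem bounds its codimension by $\dim Y$, which gives the claim.

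\textbf{Step 4: the upper bound in (b).} Write $X=\bigcup X_i$ as a union of irreducible components, and let $Z_i=\overline{\pi(X_i)}$, which is irreducible. Apply Step 2 to $X_i\to Z_i$. A generic fibre has dimension $\dim X_i-\dim Z_i$, and it lies inside a fibre of $\pi$, so this is at most $r$. Hence
\[
\dim X_i\le r+\dim Z_i\le r+\dim Y .
\]
Taking the maximum over $i$ gives $\dim X\le r+\dim Y$.

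\textbf{Step 5: the lower bound in (b).} Fibres of dimension $\ge r$ are nonempty, so $\pi$ is surjective. The image $Y=\bigcup\pi(X_i)$ is a finite union of constructible sets, so some $Z_i$ equals $Y$. Let $U\subset Y$ be a dense open set that avoids every $Z_j\ne Y$ and on which Step 2 applies to each dominant component. For $y\in U$, the fibre $\pi^{-1}(y)$ is the union of the fibres of the dominant components, which have dimensions $\dim X_i-\dim Y$. Therefore
\[
r\le \max_i(\dim X_i-\dim Y)\le \dim X-\dim Y .
\]

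The main obstacle is Step 2: producing a single nonzero $f$ over which the projection is simultaneously finite and surjective onto $D(f)\times\mathbb{A}^e$. This is exactly where generic freeness, or equivalently the EGA constructibility of fibre dimension, is needed. The remaining steps are bookkeeping with irreducible components.
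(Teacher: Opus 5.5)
The paper gives no proof of this lemma and simply quotes Harris and EGA IV. Your route (a relative Noether normalization over some $D(f)$ for (a), then passing to irreducible components for (b)) is therefore a reasonable self-contained substitute, and Steps 3--5 are fine once Step 2 is in place. However, Step 2, which you rightly call the crux, is justified incorrectly in both of its first two bullets.

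\textbf{First bullet.} An arbitrary transcendence basis $t_1,\dots,t_e\in A$ does not work. Clearing denominators in the minimal equation of a generator over $K(Y)(t_1,\dots,t_e)$ produces a leading coefficient in $B[t_1,\dots,t_e]$, not in $B$. Inverting an element of $B$ never makes a nonconstant polynomial a unit in $B_f[t_1,\dots,t_e]$. The claim genuinely fails for such $t_i$: if $Y$ is a point, $A=\mathbb{F}[x,x^{-1}]$ and $t_1=x$, then $A$ is not finite over $\mathbb{F}[t_1]$ and there is nothing to localize, whereas $t_1=x+x^{-1}$ works. The $t_i$ have to come from the Nagata-type change of variables in the proof of Noether normalization. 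The relative statement you name (there exist $0\neq f\in B$ and $t_1,\dots,t_e\in A$ with $A_f$ finite over $B_f[t_1,\dots,t_e]$) is true, but only for those specially chosen $t_i$.

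\textbf{Second bullet.} Generic freeness applied to $B\to A$ gives $A_f$ free over $B_f$, not over $B_f[t_1,\dots,t_e]$. Freeness over the polynomial ring is false in general. It would make the fibres over $D(f)$ finite and free over polynomial rings, hence Cohen--Macaulay; for $Y$ a point it would say that every irreducible affine variety is Cohen--Macaulay.

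The second error is harmless, because flatness is not needed. Since $B_f[t_1,\dots,t_e]\subseteq A_f$ is an injective integral extension, lying over shows that every prime of $B_f[t_1,\dots,t_e]$ is contracted from a prime of $A_f$. For a closed point $y\in D(f)$ with maximal ideal $\mathfrak{m}_y$, primes containing $\mathfrak{m}_y$ are contracted from primes containing $\mathfrak{m}_yA_f$. Thus $A_f/\mathfrak{m}_yA_f$ is integral over $\kappa(y)[t_1,\dots,t_e]$ with surjective map on spectra, and incomparability together with going up gives fibre dimension exactly $e$. With the correct choice of the $t_i$ and this lying-over argument in place of generic freeness, Step 2, and hence (a) and (b), go through as you describe.

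Two smaller points. Replacing $X$ by a single dense affine open only controls $\pi^{-1}(y)\cap\operatorname{Spec}A$, so you should cover $X$ by finitely many affine opens and intersect the resulting dense opens of $Y$. Restricting (b) to irreducible $Y$ costs nothing: Step 4 never uses irreducibility, and for the lower bound you may pass to a component of $Y$ of maximal dimension.
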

\begin{lemma}{\normalfont\cite[Corollary~11.13]{harris2013algebraic}}\label{upper-semicontinuous}
Let $X,Y$ be projective varieties, and $\pi:X\to Y$ be a regular morphism.  Then the function $\lambda: Y \to \mathbb{N}_0 \cup \{-\infty\}$ defined by  $\lambda(q)=\dim \pi^{-1}(q)$ is upper-semicontinuous. 
\end{lemma}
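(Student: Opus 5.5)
The plan is to show that for every integer $k$ the superlevel set $Y_k \coloneqq \{q \in Y : \dim \pi^{-1}(q) \ge k\}$ is closed in $Y$; this is exactly upper-semicontinuity of $\lambda$. Because $X$ is projective, $\pi$ is a closed map (projective varieties are complete). It therefore suffices to prove a statement on the source. Namely, I will show that
\[
X_k \coloneqq \{x \in X : \dim_x \pi^{-1}(\pi(x)) \ge k\}
\]
is closed in $X$, where $\dim_x$ is the maximal dimension of a component of the fibre through $x$. This is enough because $Y_k = \pi(X_k)$: a fibre has dimension $\ge k$ exactly when one of its points lies on a component of dimension $\ge k$.

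First I reduce to the irreducible case. Write $X = X_1 \cup \dots \cup X_r$ as a union of irreducible components. The local fibre dimension at $x$ is the maximum over those $X_i$ containing $x$ of the local fibre dimension of $\pi|_{X_i}$. Hence $X_k$ is a finite union of the corresponding sets for the $X_i$, and it suffices to treat $X$ irreducible. Then replace $Y$ by the closed irreducible subvariety $\overline{\pi(X)} = \pi(X)$, so $\pi$ becomes dominant, in fact surjective.

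In this situation, Lemma~\ref{dimension of fibre}(a) shows that the generic fibre has dimension $e \coloneqq \dim X - \dim Y$. I will then prove the lower bound that \emph{every} component of every fibre has dimension at least $e$. For this, choose near $q$ an affine chart and $\dim Y$ regular functions cutting out $q$ set-theoretically (a system of parameters at $q$). Pulling these back, each component of $\pi^{-1}(q)$ is cut out locally in $X$ by $\dim Y$ equations, and Krull's principal ideal theorem gives the bound $\ge e$. Consequently, if $k \le e$ then $X_k = X$, which is closed.

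If $k > e$, the generic statement supplies a dense open $U \subseteq Y$ over which all fibres have dimension exactly $e$; I would obtain this from the generic-smoothness/generic-flatness type argument underlying Lemma~\ref{dimension of fibre}(a), applied on an open set. Then $X_k \subseteq X' \coloneqq \pi^{-1}(Y \setminus U)$, which is a proper closed subset of $X$ and so has strictly smaller dimension. Moreover, for $x \in X'$ the fibre through $x$ is the same whether computed in $X$ or in $X'$. Hence $X_k$ equals the corresponding set for $\pi|_{X'} : X' \to Y$, and it is closed by induction on $\dim X$. The base case, $\dim X = 0$, is trivial.

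The main obstacle is the existence of the dense open $U$ on which the fibre dimension is \emph{exactly} $e$, rather than merely at generic points. Lemma~\ref{dimension of fibre}(a) as stated gives this for generic $y$, and I would interpret ``generic'' as ``on a dense open subset''. If that reading is not available, I would prove it directly: reduce to $X$ and $Y$ affine, apply Noether normalization to the extension of function fields $K(Y) \subseteq K(X)$ to factor $\pi$ over an open subset as a finite map to $Y \times \mathbb{A}^{e}$ followed by the projection, and read off the fibre dimension $e$ there.
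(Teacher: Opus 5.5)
The paper gives no proof of this lemma and simply cites \cite[Corollary~11.13]{harris2013algebraic}. Your argument is correct and is essentially the proof in that source: upper-semicontinuity of the local fibre dimension on $X$ is established there by the same three steps (the everywhere lower bound $\dim X-\dim Y$, a dense open subset of $Y$ over which fibres have exactly that dimension, and induction on $\dim X$), and is then pushed down to $Y$ using that $\pi$ is closed. If you want the argument to be independent of Harris's Theorem~11.12, from which Lemma~\ref{dimension of fibre}(a) is also drawn, your Noether-normalization fallback is the right way to produce the open set $U$. Carry it out on finitely many affine pieces of $X$ lying over affine opens of $Y$, and then intersect the resulting opens.
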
 

\begin{lemma}[Valuative criterion]\cite[Theorem~4.7]{hartshorne2013algebraic}\label{lem:properness}
Let $f \colon X \to Y$ be a morphism of schemes. Suppose that $X$ is Noetherian and $f$ is of finite type. Then $f$ is proper if and only if for every valuation ring $R$ with the field of frations $\mathbb{K}$, and for every commutative diagram
\[
\begin{tikzcd}[row sep=1em, column sep=1em]
	{\mathrm{Spec} \mathbb{K}} &&&& X \\
	\\
	\\
	{\mathrm{Spec} R} &&&& Y
	\arrow[from=1-1, to=1-5]
	\arrow["i"', from=1-1, to=4-1]
	\arrow["f", from=1-5, to=4-5]
	\arrow[dashed, from=4-1, to=1-5]
	\arrow[from=4-1, to=4-5]
\end{tikzcd}
\]
in which the left vertical morphism is induced by the inclusion
$R \subseteq \mathbb{K}$, there exists a unique morphism
$\operatorname{Spec} R \to X$ (the dashed arrow) making the whole diagram
commute.
\end{lemma}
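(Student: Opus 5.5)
This is a standard result quoted from Hartshorne, and I would follow the classical argument, splitting into necessity and sufficiency. Throughout I use two facts. First, a valuation ring $R\subseteq\mathbb{K}$ is maximal among local subrings of $\mathbb{K}$ for the domination order. Second, every local domain $A\subseteq \mathbb{K}$ is dominated by some valuation ring of $\mathbb{K}$; this follows from Zorn's lemma applied to pairs $(B,\mathfrak m_B)$ dominating $A$.

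\emph{Necessity.} Assume $f$ is proper. Uniqueness of the dashed arrow comes from separatedness. Two lifts $h_1,h_2\colon \operatorname{Spec}R\to X$ that agree on $\operatorname{Spec}\mathbb{K}$ give a map $\operatorname{Spec}R\to X\times_Y X$. The diagonal is a closed immersion, so the preimage of the diagonal is a closed subscheme of $\operatorname{Spec}R$ that contains the generic point. Since $R$ is a reduced domain, this closed subscheme is everything, hence $h_1=h_2$.

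For existence, base change to $X_R=X\times_Y\operatorname{Spec}R\to\operatorname{Spec}R$, which is still proper. The map $\operatorname{Spec}\mathbb{K}\to X_R$ picks out a point $\xi$; let $Z=\overline{\{\xi\}}$ with its reduced structure. The image of $Z$ is closed and contains the generic point of $\operatorname{Spec}R$, so it is all of $\operatorname{Spec}R$. Choose $z\in Z$ lying over the closed point. Then $k(\xi)\subseteq\mathbb{K}$, and $\mathcal{O}_{Z,z}\subseteq k(\xi)$ dominates $R\cap k(\xi)$. By the maximality of valuation rings under domination, $\mathcal{O}_{Z,z}\subseteq R$ with $\mathfrak m_R\cap\mathcal{O}_{Z,z}=\mathfrak m_z$. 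This gives a local morphism $\operatorname{Spec}R\to Z\to X_R\to X$, which is the required lift.

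\emph{Sufficiency.} Assume the lifting property holds. Separatedness follows from the uniqueness part via the valuative criterion for separatedness. In the Noetherian setting it suffices to show that the diagonal is closed. Its image is constructible, so it is enough to show that it is stable under specialization. Given a specialization in $X\times_Y X$, one realizes it by $\operatorname{Spec}R\to X\times_YX$ for a suitable valuation ring, using the dominating-valuation-ring fact; uniqueness of lifts then forces the special point to lie on the diagonal.

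For universal closedness, note that the lifting property is stable under base change $Y'\to Y$. It therefore suffices to show that any $f$ of finite type with Noetherian source and the lifting property is closed, and then to reduce an arbitrary base change to the case of finite-type $Y'$, where the source remains Noetherian.

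To prove closedness, take a closed $W\subseteq X$. By Chevalley's theorem $f(W)$ is constructible, so it suffices to show it is closed under specialization. Let $y_1=f(x_1)\rightsquigarrow y_0$. Replace $Y$ by $\overline{\{y_1\}}$ with its reduced structure, and set $\mathbb{K}=k(x_1)$, which contains $k(y_1)$. Choose a valuation ring $R\subseteq\mathbb{K}$ dominating $\mathcal{O}_{\overline{\{y_1\}},y_0}$. This yields a commutative square with top arrow $\operatorname{Spec}\mathbb{K}\to X$ and bottom arrow $\operatorname{Spec}R\to Y$. The lift $\operatorname{Spec}R\to X$ sends the closed point to a point $x_0\in\overline{\{x_1\}}\subseteq W$ with $f(x_0)=y_0$. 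Hence $y_0\in f(W)$.

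\emph{Main obstacle.} The delicate step is the reduction from arbitrary base changes, which may have non-Noetherian sources, to the Noetherian finite-type case needed for Chevalley's theorem. I would handle it as Hartshorne does, via Chow's lemma or the standard limit argument: write the base as a union of affine opens of finite type over the relevant Noetherian ring and use that closedness can be checked locally on the base.
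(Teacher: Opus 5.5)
Your necessity argument and the separatedness half of sufficiency are fine, and they match the proof in Hartshorne that the paper cites. For necessity, $\xi$ lies over the generic point, so in fact $k(\xi)=\mathbb{K}$, and $\mathcal{O}_{Z,z}$ dominates, hence equals, $R$. For separatedness, $X\times_Y X$ is of finite type over the Noetherian scheme $X$, so your constructibility step is legitimate there.

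The gap is in universal closedness. You obtain constructibility of $f(W)$ from Chevalley's theorem, but that needs hypotheses you do not have:
\begin{itemize}
\item the statement puts no Noetherian hypothesis on $Y$;
\item after an arbitrary base change $Y'\to Y$, neither $Y'$ nor $X'=X\times_Y Y'$ need be Noetherian;
\item $f$ need not be of finite presentation. For example, $\operatorname{Spec}k\to\operatorname{Spec}k[x_1,x_2,\dots]$ at the origin has Noetherian source and is of finite type, but its image is not constructible in the EGA sense.
\end{itemize}
So you cannot conclude that $f(W)$, let alone $f'(W)$, is constructible in any sense for which stability under specialization forces closedness.

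Your proposed repair does not close this. An arbitrary $Y'$ is not covered by affine opens of finite type over a Noetherian ring. Chow's lemma is irrelevant to the issue, and Hartshorne's proof uses neither device. A genuine limit argument would need EGA IV-type approximation of closed subsets and of images in inverse limits, none of which you supply.

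The missing idea is that constructibility is not needed at all, only quasi-compactness. By Hartshorne's Lemma II.4.5, for any quasi-compact morphism $g\colon Z\to Y'$ of arbitrary schemes, $g(Z)$ is closed if and only if it is stable under specialization.

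So fix any $Y'\to Y$ and a closed $Z\subseteq X'$ with its reduced structure. The restriction $f'|_Z$ is of finite type, hence quasi-compact, so it suffices to check specializations. Your specialization argument then works verbatim for $f'$:
\begin{itemize}
\item take $z_1\in Z$ with $f'(z_1)=y_1'\rightsquigarrow y_0'$, set $\mathbb{K}=k(z_1)$, and choose a valuation ring $R$ of $\mathbb{K}$ dominating the local ring of $y_0'$ on $\overline{\{y_1'\}}$;
\item compose the resulting square down to $f\colon X\to Y$ and lift there by hypothesis;
\item use the universal property of $X\times_Y Y'$ to obtain $\operatorname{Spec}R\to X'$, which sends the closed point into $\overline{\{z_1\}}\subseteq Z$ over $y_0'$.
\end{itemize}
No reduction to Noetherian or finite-type bases is required.

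A small correction: rather than ``replace $Y$ by $\overline{\{y_1\}}$'', which would change $X$, you should map $\operatorname{Spec}R$ to $Y$ through $\overline{\{y_1\}}$.
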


Given a perfect field $\mathbb{F}$ of positive characteristic, we denote its \emph{ring of Witt vectors} by $W(\mathbb{F})$ \cite[Chapter~II \S~6]{serre1979local}.
\begin{lemma}\cite[Chapter~II Theorem~8]{serre1979local}\label{property of Witt vector}
If $\mathbb{F}$ is a perfect field of positive characteristic, then $W(\mathbb{F})$ is a complete discrete valuation ring with residue field $\mathbb{F}$. Moreover, the field of fractions of $W(\mathbb{F})$ has characteristic zero.
\end{lemma}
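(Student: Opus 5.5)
The plan is to build $W(\mathbb{F})$ explicitly from Witt vectors and to check the three claims directly: that it is a complete discrete valuation ring, that its residue field is $\mathbb{F}$, and that its fraction field has characteristic zero. Set $p=\ch(\mathbb{F})$. As a set, $W(\mathbb{F})=\mathbb{F}^{\mathbb{N}_0}$. The ring operations come from the ghost components $w_n(a)=\sum_{i=0}^n p^i a_i^{p^{n-i}}$, which give polynomials $S_n, P_n\in\mathbb{Z}[a_0,\dots,a_n,b_0,\dots,b_n]$ with $w_n(S)=w_n(a)+w_n(b)$ and $w_n(P)=w_n(a)w_n(b)$.

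\textbf{Step 1: construct the ring.} First I will show that $S_n$ and $P_n$ have integer coefficients. I expect this to be the main technical obstacle. The argument is the standard Dwork-type lemma: a sequence $(x_n)$ in a ring is a ghost vector of some Witt vector exactly when $x_n\equiv\phi(x_{n-1}) \pmod{p^n}$ for a Frobenius lift $\phi$. Applying this over $\mathbb{Z}[a,b]$ with $\phi(a_i)=a_i^p$ and $\phi(b_i)=b_i^p$ gives integrality. The ring axioms then hold over $\mathbb{Z}[a,b,c]$, because the ghost map is injective there (it is $p$-torsion free). Reducing mod $p$ yields the ring $W(\mathbb{F})$.

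\textbf{Step 2: Verschiebung and Frobenius.} Let $V(a_0,a_1,\dots)=(0,a_0,a_1,\dots)$ and $F(a)=(a_0^p,a_1^p,\dots)$. Comparing ghost components shows that $V$ is additive and that $p\cdot x=VF(x)$ in characteristic $p$. Since $\mathbb{F}$ is perfect, $F$ is bijective, so
\[
p^nW(\mathbb{F})=V^nW(\mathbb{F})=\{a: a_0=\dots=a_{n-1}=0\}.
\]
Two consequences follow:
\begin{enumerate}[(i)]
\item The $p$-adic topology coincides with the product topology, so $W(\mathbb{F})=\varprojlim W(\mathbb{F})/p^n$ is complete and separated.
\item The projection $a\mapsto a_0$ is a ring map, because $S_0=a_0+b_0$ and $P_0=a_0b_0$. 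Its kernel is $pW(\mathbb{F})$, so $W(\mathbb{F})/pW(\mathbb{F})\cong\mathbb{F}$.
\end{enumerate}

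\textbf{Step 3: DVR and characteristic zero.} An element $u$ with $u_0\neq0$ is a unit. To see this, use the Teichmüller lift $[u_0^{-1}]$: then $u[u_0^{-1}]=1-pz$, which is invertible by the convergent geometric series. Any nonzero $x$ has a first index $n$ with $x_n\ne0$, so $x=V^n(y)=p^n F^{-n}(y)$ with $F^{-n}(y)$ a unit. Hence every nonzero element has the form $p^nu$ with $u$ a unit. This makes $W(\mathbb{F})$ a domain whose only nonzero ideals are the $(p^n)$, that is, a DVR with uniformizer $p$ and residue field $\mathbb{F}$.

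Finally, $p^n=V^n(1)\neq0$ for every $n$, so $p$ is not nilpotent. Since $W(\mathbb{F})$ is a domain, its characteristic is therefore $0$ rather than a prime $\ell\neq p$ (for such $\ell$, $p$ would be a unit and there would be no uniformizer). So $\mathbb{Z}\hookrightarrow W(\mathbb{F})$, and the fraction field $W(\mathbb{F})[1/p]$ contains $\mathbb{Q}$, hence has characteristic zero.
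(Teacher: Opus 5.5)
Your proposal is correct and follows essentially the standard argument in Serre (\emph{Local Fields}, Ch.~II, \S6), which the paper cites without proof. The shared steps are: integrality of the Witt polynomials, $p=VF$ in characteristic $p$, $p^nW(\mathbb{F})=V^nW(\mathbb{F})$ by perfectness, hence completeness and residue field $\mathbb{F}$, and finally the factorization $x=p^nu$; Serre phrases the last steps through strict $p$-rings, but the content is the same.

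Two points need tightening, neither of which is a real gap:

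\begin{enumerate}
\item The identity $p\cdot x=VF(x)$ cannot be read off from ghost components in characteristic $p$, because the ghost map is degenerate there. Check it instead in $W(\mathbb{Z}[a_0,a_1,\dots])$. There the $n$-th ghost components of $p\cdot a$ and $V(a_0^p,a_1^p,\dots)$ differ by $p^{n+1}a_n$, which forces their Witt components to agree mod $p$. Then specialize to $\mathbb{F}$.
\item Your claim in Step~3 that $W(\mathbb{F})$ is a domain already uses $p^n=V^n(1)\neq 0$, which you only state at the end, so that fact should come first.
\end{enumerate}
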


We recall from \cite[Section~1]{Kedlaya22} that a valued field $(\mathbb{F},\nu)$ is \emph{complete} if $\mathbb{F}$ is complete with respect to the topology induced by $\nu$. We say that $\nu$ is \emph{non-archemidean} if $\nu$ is ultrametric.
\begin{lemma}\cite[Theorem~1.4.9 and Lemma~3.1.1]{Kedlaya22}\label{property of local field}
Suppose that $(\mathbb{F},\nu)$ is a complete non-archemidean valued field and $\mathbb{L}$ is a finite extension of $\mathbb{F}$. Then $\nu$ extends uniquely to an absolute value $\omega$ on $\mathbb{L}$, and $(\mathbb{L},\omega)$ is complete. Moreover, $\kappa(\mathbb{L})$ is a finite extension of $\kappa(\mathbb{F})$.
\end{lemma}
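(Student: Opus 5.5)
The plan is the standard argument for extending a complete non-archimedean absolute value, in four steps: existence via the norm map, uniqueness via equivalence of norms, completeness, and finiteness of the residue extension. Write $n=[\mathbb{L}:\mathbb{F}]$, $\mathcal{O}_{\mathbb{F}}=\{a\in\mathbb{F}:\nu(a)\le 1\}$ and $\mathfrak{m}_{\mathbb{F}}=\{a\in\mathbb{F}:\nu(a)<1\}$, so that $\kappa(\mathbb{F})=\mathcal{O}_{\mathbb{F}}/\mathfrak{m}_{\mathbb{F}}$.

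\emph{Existence.} I will define
\[
\omega(x)\coloneqq \nu\big(N_{\mathbb{L}/\mathbb{F}}(x)\big)^{1/n}.
\]
Multiplicativity, the property that $\omega(x)=0$ only for $x=0$, and the fact that $\omega$ restricts to $\nu$ on $\mathbb{F}$ are all immediate. The real content is the ultrametric inequality. After scaling, it reduces to showing that $\omega(x)\le 1$ implies $\omega(1+x)\le 1$. For this I will prove that
\[
\mathcal{O}_{\mathbb{L}}\coloneqq\{x\in\mathbb{L}:\omega(x)\le 1\}
\]
equals the integral closure of $\mathcal{O}_{\mathbb{F}}$ in $\mathbb{L}$; since an integral closure is a ring, the inequality follows.

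\emph{Main obstacle: identifying $\mathcal{O}_{\mathbb{L}}$.} One inclusion is easy. If $x$ is integral over $\mathcal{O}_{\mathbb{F}}$, then $\mathcal{O}_{\mathbb{F}}$ is integrally closed, being a valuation ring, so the minimal polynomial of $x$ lies in $\mathcal{O}_{\mathbb{F}}[t]$. Since $N_{\mathbb{L}/\mathbb{F}}(x)$ is, up to sign, a power of its constant term, we get $\nu(N(x))\le 1$. The converse is where completeness of $\mathbb{F}$ enters, through Hensel's lemma. I will use the standard consequence that for an irreducible $f=a_0+a_1t+\cdots+a_rt^r\in\mathbb{F}[t]$ with $a_0a_r\ne 0$, one has
\[
\max_i \nu(a_i)=\max\{\nu(a_0),\nu(a_r)\}.
\]
Indeed, otherwise, after normalizing so that the maximum is $1$, the reduction $\bar f$ factors nontrivially as $t^j\cdot g$ with $0<j<r$ and $g(0)\ne 0$. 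Hensel's lemma then lifts this factorization, contradicting irreducibility. Applying this to the monic minimal polynomial of $x$: its constant term has $\nu\le 1$ because it is a root of $N(x)$ up to sign, so all its coefficients lie in $\mathcal{O}_{\mathbb{F}}$ and $x$ is integral.

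\emph{Uniqueness.} Any two extensions of $\nu$ to $\mathbb{L}$ are norms on the finite-dimensional $\mathbb{F}$-vector space $\mathbb{L}$. Over a complete field all such norms are equivalent to the sup-norm with respect to a fixed basis; I will prove this by induction on dimension, using that finite-dimensional subspaces are complete and hence closed. Equivalent absolute values on a field are powers of one another, and since both restrict to $\nu$ on $\mathbb{F}$, the exponent is $1$, so they coincide.

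\emph{Completeness.} This also follows from the norm equivalence: $\omega$ is equivalent to the sup-norm $\max_i\nu(c_i)$ on $\sum_i c_ie_i$, and that norm is complete because $\mathbb{F}$ is.

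\emph{Residue field.} Let $\bar x_1,\dots,\bar x_k\in\kappa(\mathbb{L})$ be linearly independent over $\kappa(\mathbb{F})$. I claim the lifts $x_1,\dots,x_k\in\mathcal{O}_{\mathbb{L}}$ are $\mathbb{F}$-linearly independent. Suppose $\sum c_ix_i=0$ is a nontrivial relation, and divide by a coefficient of maximal valuation so that all $c_i\in\mathcal{O}_{\mathbb{F}}$ and some $c_i=1$. Reducing modulo the maximal ideal gives a nontrivial relation among the $\bar x_i$, a contradiction. Hence $k\le n$, so $[\kappa(\mathbb{L}):\kappa(\mathbb{F})]\le[\mathbb{L}:\mathbb{F}]<\infty$.
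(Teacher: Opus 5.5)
Your proof is correct. The paper gives no argument of its own for this lemma and only cites Kedlaya's notes; what you wrote is the standard textbook proof behind such citations: the norm construction of $\omega$, Hensel's lemma to identify $\{\omega\le 1\}$ with the integral closure of $\mathcal{O}_{\mathbb{F}}$, equivalence of norms for uniqueness and completeness, and lifting residue-independent elements to get $[\kappa(\mathbb{L}):\kappa(\mathbb{F})]\le[\mathbb{L}:\mathbb{F}]$. One cosmetic point: ``the exponent is $1$'' tacitly assumes $\nu$ is nontrivial. You can avoid this by applying $c\,\omega_1\le\omega_2\le C\,\omega_1$ to $x^k$, taking $k$-th roots, and letting $k\to\infty$.
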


\section{Proofs of Main Theorems}
Let $\mathbb{F}$ be a field.  We write $\Hom^d(\mathbb{F}^{n},  \mathbb{F}^m)$ for the space of all multilinear maps from $(\mathbb{F}^n)^d$ to $\mathbb{F}^{m}$.  Moreover, we denote by $\Alt^d(\mathbb{F}^{n},  \mathbb{F}^{m})$ the subspace of $\Hom^d(\mathbb{F}^{n},  \mathbb{F}^m)$ consisting of all alternating multilinear maps.  This section is devoted to proving Theorems~\ref{main theorem-1} and~\ref{main theorem-2}. For ease of reference,  we recall the definition of isotropy indices below.
\begin{defn}[Isotropy index]
For $n,  d,  m \in \mathbb{N}$, the isotropy index of $\Alt^d(\mathbb{F}^{n},  \mathbb{F}^{m})$ is defined by 
\[
\alpha_{\w}(\mathbb{F},n,d, m) \coloneqq \max \big\lbrace 
k \in \mathbb{N}_0: \alpha_{\w}(T) \ge k\ \textup{for\ any\ } T \in \Alt^d(\mathbb{F}^{n},  \mathbb{F}^{m}) 
\big\rbrace
\]
Given $n, m \in \mathbb{N}$, the isotropy index of $\Hom^d(\mathbb{F}^{n},  \mathbb{F}^m))$ is defined by 
\[
\alpha(\mathbb{F},  n, d,m) \coloneqq
\max \big\lbrace 
k \in \mathbb{N}_0: \alpha(T) \ge k\ \textup{for\ any\ } T \in \Hom^d(\mathbb{F}^{n},  \mathbb{F}^m)
\big\rbrace.
\]
\end{defn}
Equivalently,  we have $\alpha_{\w}(\mathbb{F},n,d, m) = \min \big\lbrace \alpha_{\w}(T): T \in \Alt^d(\mathbb{F}^{n},  \mathbb{F}^{m}) \big\rbrace$ and $\alpha(\mathbb{F},  n, d,m) = 
 \min \big\lbrace \alpha(T): T \in \Hom^d(\mathbb{F}^{n},  \mathbb{F}^m) \big\rbrace$.  It is clear that 
\[
\min \{d-1,n\}  \le \alpha_{\w}(\mathbb{F},n,d,m)\le n ,  \quad 0\le \alpha(\mathbb{F},  n, d,m) \le n.
\]
 
\subsection{Tur\'{a}n Problem for $\Alt^d(\mathbb{F}^{n},  \mathbb{F}^{m})$}\label{subsec:alternatingTuran}
We first prove Theorem~\ref{main theorem-1}. Let $\mathbb{F}$ be a field and let $d,n,m,k$ be positive integers.  We denote by $\mathbb{P} ( \Alt^d(\mathbb{F}^n,  \mathbb{F}^m) )$ the projectivization of the vector space $\Alt^d(\mathbb{F}^n,  \mathbb{F}^m)$.  Given $T \in \Alt^d(\mathbb{F}^n,  \mathbb{F}^m)$,  we write $T|_{\mathbb{V}} = 0$ if $\mathbb{V} \subseteq \mathbb{F}^n$ is an isotropic subspace of $T$.  We define the incidence set for $k$-dimensional isotropic subspaces as 
\begin{equation}\label{eq:incidence}
I_1 \coloneqq \big\lbrace
(\mathbb{V}, [T]) \in \Gr(k,  \mathbb{F}^n) \times \mathbb{P} ( \Alt^d(\mathbb{F}^n,  \mathbb{F}^m) ): T|_{\mathbb{V}} = 0
\big\rbrace.
\end{equation}
\begin{lemma}\label{lem-irreducible incidence}
For integers $d, n,k$ satisfying $d \le n$ and $0 \le k \le n$, the incidence set $I_1$ is an irreducible closed subvariety of $\Gr(k,  \mathbb{F}^n) \times \mathbb{P} ( \Alt^d(\mathbb{F}^n,  \mathbb{F}^m) )$,  and $\dim I_1 = (n-k)k+m \binom{n}{d}- m \binom{k}{d} -1$.
\end{lemma}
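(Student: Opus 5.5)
We study the incidence set through its first projection $\pi_1\colon I_1 \to \Gr(k,\mathbb{F}^n)$. Irreducibility will follow from Lemma~\ref{thm:irreducible_fibre}, and the dimension count from Lemma~\ref{dimension of fibre}(b), applied in both directions.

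\textbf{Closedness.} Cover $\Gr(k,\mathbb{F}^n)$ by standard affine charts. On the chart where $\mathbb{V}$ is the row space of a $k\times n$ matrix $A$ of the form $(I_k\mid *)$, the condition $T|_{\mathbb{V}}=0$ reads
\[
T(a_{i_1},\dots,a_{i_d})=0 \quad \text{for all } i_1<\dots<i_d,
\]
where $a_1,\dots,a_k$ are the rows of $A$. Since $T$ is alternating, these tuples suffice. Each such equation is polynomial in the entries of $A$ and linear, hence homogeneous, in the coordinates of $T$. So $I_1$ is cut out by bihomogeneous-compatible equations on each chart, and is therefore closed.

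\textbf{Fibres of $\pi_1$.} Fix $\mathbb{V}\in\Gr(k,\mathbb{F}^n)$. Choose a basis $e_1,\dots,e_n$ of $\mathbb{F}^n$ with $\mathbb{V}=\spa(e_1,\dots,e_k)$. Then
\[
\Alt^d(\mathbb{F}^n,\mathbb{F}^m)\cong \Hom(\Lambda^d\mathbb{F}^n,\mathbb{F}^m),
\]
with coordinates $T(e_{i_1},\dots,e_{i_d})$ indexed by $i_1<\dots<i_d$. The restriction map to $\Alt^d(\mathbb{V},\mathbb{F}^m)$ is a linear surjection onto a space of dimension $m\binom{k}{d}$. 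It amounts to forgetting the coordinates with all $i_j\le k$.

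The fibre $\pi_1^{-1}(\mathbb{V})$ is therefore the projectivization of a linear subspace of dimension $m\binom{n}{d}-m\binom{k}{d}$. This subspace is nonzero as long as $k<n$ or $d>n$. In the edge case $k=n\ge d$ the subspace is zero, so $I_1$ is empty and the formula degenerates to $-1$; we treat that case separately, or interpret it by convention.

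So, for $k<n$, every fibre is a projective space $\mathbb{P}^{m\binom{n}{d}-m\binom{k}{d}-1}$. It is irreducible, nonempty, and of constant dimension. Hence $\pi_1$ is surjective onto the irreducible variety $\Gr(k,\mathbb{F}^n)$.

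\textbf{Conclusion.} Lemma~\ref{thm:irreducible_fibre} gives that $I_1$ is irreducible. Lemma~\ref{dimension of fibre}(b), used with both inequalities, gives
\[
\dim I_1 = k(n-k) + m\binom{n}{d} - m\binom{k}{d} - 1 .
\]

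\textbf{Main obstacle.} The only delicate point is checking that the hypotheses of the cited lemmas genuinely hold. In particular, $I_1$ must be treated as a variety, meaning a closed subset with its reduced structure, for the fibre lemmas to apply. If one prefers a cleaner argument, $I_1$ can be identified with the projectivization of the vector subbundle
\[
\ker\bigl(\Alt^d(\mathbb{F}^n,\mathbb{F}^m)\otimes\mathcal{O} \to \mathcal{H}om(\Lambda^d\mathcal{S},\mathcal{O}^m)\bigr)
\]
over $\Gr(k,\mathbb{F}^n)$, where $\mathcal{S}$ is the tautological subbundle. This bundle map is surjective with constant-rank kernel, so its projectivization is smooth and irreducible with the stated dimension.
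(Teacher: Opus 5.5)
Your proof is correct. For irreducibility and dimension it is the paper's argument: $\pi_1$ exhibits $I_1$ as a projective bundle over $\Gr(k,\mathbb{F}^n)$ with fibre $\mathbb{P}^{m\binom{n}{d}-m\binom{k}{d}-1}$, and Lemmas~\ref{thm:irreducible_fibre} and~\ref{dimension of fibre} finish.

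The genuine difference is closedness. The paper writes no equations for $I_1$. Instead it introduces the auxiliary projective variety $Z$ of tuples $(\mathbb{V},[T],[v_1],\dots,[v_d])$ with $v_i\in\mathbb{V}$ and $T(v_1,\dots,v_d)=0$. It observes that the fibre of $Z\to\Gr(k,\mathbb{F}^n)\times\mathbb{P}(\Alt^d(\mathbb{F}^n,\mathbb{F}^m))$ over $(\mathbb{V},[T])$ sits inside $\mathbb{P}(\mathbb{V})^d$ and reaches dimension $(k-1)d$ exactly when $T|_{\mathbb{V}}=0$. It then concludes by upper semicontinuity of fibre dimension (Lemma~\ref{upper-semicontinuous}).

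That route is coordinate-free and only uses the standard closed conditions ``$v\in\mathbb{V}$'' and pointwise vanishing. However, it relies on the semicontinuity theorem and on the implicit fact that a full-dimensional closed subset of the irreducible $\mathbb{P}(\mathbb{V})^d$ is everything. Your chart computation is more elementary and gives more. On each chart, $I_1$ is cut out by $m\binom{k}{d}$ equations linear in $T$ of constant rank, which is precisely the projective-subbundle structure you use next. So closedness and the bundle description come from a single computation.

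Your caveat about $k=n$ is well taken and is something the paper glosses over. There $I_1=\emptyset$, so the irreducibility assertion fails under the usual convention. This is harmless, because the lemma is only applied with $k=k_0<n$.

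One wording slip: restriction to $\mathbb{V}$ \emph{keeps} the coordinates with all $i_j\le k$ and discards the rest. The fibre is where those $m\binom{k}{d}$ coordinates vanish, which is what your count actually uses.
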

\begin{proof}
We consider 
\[
Z \coloneqq
\big\lbrace
(\mathbb{V},[T],  [v_{1}],\dots, [v_{d}])\in \Gr(k,\mathbb{F}^n)\times \mathbb{P}(\Alt^{d}(\mathbb{F}^{n},\mathbb{F}^{m}))\times(\mathbb{P}^{n-1})^{d}: v_1,\dots,  v_d \in \mathbb{V},\; T(v_{1},\dots,v_{d})=0
\big\rbrace,
\]
where $[T]$ and $[v]$ are elements in $\mathbb{P}(\Alt^{d}(\mathbb{F}^{n},\mathbb{F}^{m}))$ and $\mathbb{P}^{n-1}$ represented by $T \in \Alt^{d}(\mathbb{F}^{n},\mathbb{F}^{m}) \setminus \{0\}$ and $v \in \mathbb{F}^n \setminus \{0\}$,  respectively. By definition,  $Z$ is a projective variety.  

Let $\pi: Z \to \Gr(k,\mathbb{F}^n) \times \mathbb{P}(\Alt^{d}(\mathbb{F}^{n},\mathbb{F}^{m}))$ be the natural projection to the first two components. Denote 
\[
X \coloneqq \big\lbrace
(\mathbb{V},[T]) \in \Gr(k,\mathbb{F}^n)\times \mathbb{P}(\Alt^{d}(\mathbb{F}^{n},\mathbb{F}^{m})):  \dim(\pi^{-1}(\mathbb{V}, [T])) \ge (k-1)d
\big\rbrace.
\] 
Since each $(\mathbb{V},T,  [v_1],\dots,  [v_d]) \in \pi^{-1}(\mathbb{V},[T])$ must satisfy $T|_{\mathbb{W}} = 0$ where $\mathbb{W} \coloneqq \spa_{\mathbb{F}}\lbrace v_1,\dots,  v_d \rbrace \subseteq \mathbb{V}$, it holds that $\dim \pi^{-1}(\mathbb{V},[T]) \le (k-1)d$.  Thus,  we must have $T|_{\mathbb{V}} = 0$ for each $(\mathbb{V},[T]) \in X$.  In particular, this implies that $X \subseteq I_1$.  Conversely,  if $(\mathbb{V}, [T]) \in I_1$,  then 
\[
\pi^{-1} (\mathbb{V},[T]) \supseteq \big\lbrace
(\mathbb{V},[T],  [v_1],\dots,  [v_d]) \in \Gr(k,\mathbb{F}^n)\times \mathbb{P}(\Alt^{d}(\mathbb{F}^{n},\mathbb{F}^{m}))\times(\mathbb{P}^{n-1})^{d}: T|_{\mathbb{V}} = 0,\;v_1,\dots,  v_d\in \mathbb{V} 
\big\rbrace,
\]
from which we conclude that $\dim \pi^{-1}(\mathbb{V},  [T]) \ge (k - 1)d$ and $(\mathbb{V},  [T]) \in X$. Therefore, we have $I_1 = X$. By Lemma~\ref{upper-semicontinuous}, $I_1$ is closed.

Let $\pi_1: I_1 \to \Gr(k,\mathbb{F}^n)$ be the natural projection to the first component.  We observe that $I_1$ is a projective bundle on $\Gr(k,\mathbb{F}^n)$ via $\pi_1$,  the fiber of which has dimension $m\big( \binom{n}{d} - \binom{k}{d}\big) - 1$.  Hence $I_1$ is irreducible of the desired dimension by Lemmas~\ref{thm:irreducible_fibre} and \ref{dimension of fibre}.
\end{proof}
Next,  we consider 
\begin{equation}\label{eq:V}
V \coloneqq \big\lbrace
[T] \in \mathbb{P}(\Alt^{d}(\mathbb{F}^{n},\mathbb{F}^{m})): \alpha_{\w}(T)\ge k
\big\rbrace.
\end{equation}
Let $\pi_2: I_1 \to \mathbb{P}(\Alt^{d}(\mathbb{F}^{n},\mathbb{F}^{m}))$ be the natural projection to the second component.  Then we have $V = \pi_2(I_1)$.  Since $\Gr(k,\mathbb{F}^n)$ is projective,  $\pi_2$ is a closed map.  This leads to the following corollary of Lemma~\ref{lem-irreducible incidence}.
\begin{cor}\label{coro-irreducible and closed}
The set $V$ is an irreducible closed subvariety of $ \mathbb{P}(\Alt^{d}(\mathbb{F}^{n},\mathbb{F}^{m}))$.
\end{cor}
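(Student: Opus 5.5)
The plan is to obtain both properties of $V$ directly from Lemma~\ref{lem-irreducible incidence} through the projection $\pi_2 \colon I_1 \to \mathbb{P}(\Alt^{d}(\mathbb{F}^{n},\mathbb{F}^{m}))$. The argument has three steps.

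First, I will check that $V = \pi_2(I_1)$. If $[T] \in V$, then $\alpha_{\w}(T) \ge k$, so some subspace $\mathbb{V}'$ of dimension at least $k$ satisfies $T|_{\mathbb{V}'} = 0$. Any $k$-dimensional subspace $\mathbb{V} \subseteq \mathbb{V}'$ is then also isotropic, because isotropy passes to subspaces. Hence $(\mathbb{V},[T]) \in I_1$. Conversely, if $(\mathbb{V},[T]) \in I_1$, then $\mathbb{V}$ is a $k$-dimensional isotropic subspace of $T$, so $\alpha_{\w}(T) \ge k$. The condition $T|_{\mathbb{V}} = 0$ is invariant under scaling $T$, so working with $[T]$ rather than $T$ causes no problem.

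Second, I will show that $V$ is closed. Both $\Gr(k,\mathbb{F}^n)$ and $\mathbb{P}(\Alt^{d}(\mathbb{F}^{n},\mathbb{F}^{m}))$ are projective, so $\Gr(k,\mathbb{F}^n)$ is complete. Completeness means the projection $\Gr(k,\mathbb{F}^n) \times \mathbb{P}(\Alt^{d}(\mathbb{F}^{n},\mathbb{F}^{m})) \to \mathbb{P}(\Alt^{d}(\mathbb{F}^{n},\mathbb{F}^{m}))$ is a closed map. By Lemma~\ref{lem-irreducible incidence}, $I_1$ is closed in the product, so its image $V$ is closed. This is the main-theorem-of-elimination-theory step. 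Alternatively, one can apply the valuative criterion, Lemma~\ref{lem:properness}, to see that the projection from the Grassmannian factor is proper.

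Third, I will show that $V$ is irreducible. By Lemma~\ref{lem-irreducible incidence}, $I_1$ is irreducible, and the continuous image of an irreducible space is irreducible. Concretely, suppose $V = V_1 \cup V_2$ with each $V_i$ closed. Then $I_1 = \pi_2^{-1}(V_1) \cup \pi_2^{-1}(V_2)$ is a union of two closed subsets, so one of them, say $\pi_2^{-1}(V_1)$, equals $I_1$. Since $\pi_2$ is surjective onto $V$, this gives $V = V_1$.

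Almost all the weight of this corollary rests on Lemma~\ref{lem-irreducible incidence}, which is already established. The one point that needs care is the statement's implicit range of $k$. If $k > n$, then $V$ is empty; if $k \le d-1$, then $V$ is the whole space. Both cases are trivially closed, and irreducibility is either vacuous or immediate. So I will assume $d \le n$ and $0 \le k \le n$, which are exactly the hypotheses of the lemma.
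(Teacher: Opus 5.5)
Your proposal is correct and follows the paper's own argument. You identify $V=\pi_2(I_1)$, use that $\Gr(k,\mathbb{F}^n)$ is projective so $\pi_2$ is a closed map, and obtain irreducibility of $V$ as the image of the irreducible variety $I_1$ from Lemma~\ref{lem-irreducible incidence}; your explicit check of $V=\pi_2(I_1)$ and the image-of-irreducible argument spell out steps the paper leaves implicit.
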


Given an integer $\max\{0,2k - n\} \le \ell \le k$,  we denote 
\begin{equation}\label{eq:Sigma}
\Sigma_\ell \coloneqq \big\lbrace
(\mathbb{U},\mathbb{V}) \in \Gr(k,\mathbb{F}^n) \times \Gr(k,\mathbb{F}^n): \dim \mathbb{U} \cap \mathbb{V} = \ell
\big\rbrace.
\end{equation}
\begin{lemma}\label{lem-intersection variety}
For each $\max\{0,2k - n\} \le \ell \le k$,  the set $\Sigma_\ell \subseteq \Gr(k,\mathbb{F}^n)\times \Gr(k,\mathbb{F}^n)$ is a quasi-projective subvariety of dimension $2k(n - k + \ell) - \ell(n+\ell)$.
\end{lemma}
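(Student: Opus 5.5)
The plan is to fibre $\Sigma_\ell$ over the Grassmannian of intersections. First, to see that $\Sigma_\ell$ is quasi-projective, I note that the condition $\dim \mathbb{U}\cap\mathbb{V}\ge \ell$ is closed in $\Gr(k,\mathbb{F}^n)^2$. Indeed, it is equivalent to $\dim(\mathbb{U}+\mathbb{V})\le 2k-\ell$. In Plücker or local coordinates this means the $2k\times n$ matrix stacking bases of $\mathbb{U}$ and $\mathbb{V}$ has rank at most $2k-\ell$, which is the vanishing of minors. Hence
\[
\Sigma_\ell=\{\dim \mathbb{U}\cap \mathbb{V}\ge \ell\}\setminus\{\dim \mathbb{U}\cap\mathbb{V}\ge \ell+1\}
\]
is a difference of closed sets, i.e.\ locally closed, and so quasi-projective.

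For the dimension, I consider the morphism
\[
\phi:\Sigma_\ell\to \Gr(\ell,\mathbb{F}^n),\qquad (\mathbb{U},\mathbb{V})\mapsto \mathbb{U}\cap\mathbb{V}.
\]
I would verify that $\phi$ is regular, since intersection is regular on the locus where the dimension is constant. Alternatively, and more cleanly, I can avoid regularity: define the incidence
\[
\widetilde\Sigma=\{(\mathbb{W},\mathbb{U},\mathbb{V}): \mathbb{W}\subseteq \mathbb{U}\cap\mathbb{V},\ \dim \mathbb{U}\cap\mathbb{V}=\ell\}.
\]
Its projection to $\Sigma_\ell$ is bijective, with $\mathbb{W}$ forced to equal $\mathbb{U}\cap\mathbb{V}$, so the two have the same dimension. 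Then I project $\widetilde\Sigma$ to $\mathbb{W}\in\Gr(\ell,\mathbb{F}^n)$.

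Next I compute the fibre over a fixed $\mathbb{W}$. Passing to the quotient $\mathbb{F}^n/\mathbb{W}\cong\mathbb{F}^{n-\ell}$, the fibre is the set of pairs $(\bar{\mathbb{U}},\bar{\mathbb{V}})\in\Gr(k-\ell,\mathbb{F}^{n-\ell})^2$ with $\bar{\mathbb{U}}\cap\bar{\mathbb{V}}=0$. This is a nonempty open subset of $\Gr(k-\ell,n-\ell)^2$; nonemptiness uses $2(k-\ell)\le n-\ell$, i.e.\ $\ell\ge 2k-n$. It is therefore irreducible of dimension $2(k-\ell)(n-k)$, and all fibres are isomorphic via $\mathrm{GL}_n$-translation. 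By Lemma~\ref{dimension of fibre}(b), applied with both inequalities,
\[
\dim \Sigma_\ell=\ell(n-\ell)+2(k-\ell)(n-k).
\]

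Finally I expand: $\ell n-\ell^2+2kn-2k^2-2\ell n+2\ell k = 2k(n-k+\ell)-\ell(n+\ell)$, which matches the claimed value. The same fibration, combined with Lemma~\ref{thm:irreducible_fibre}, also gives irreducibility if that is needed later.

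The main obstacle is the technical point of justifying regularity of $\widetilde\Sigma\to\Gr(\ell,\mathbb{F}^n)$ and applying the fibre dimension lemma on a quasi-projective rather than projective source. If needed, I would instead apply Lemma~\ref{dimension of fibre}(b) to the closure, or argue through the homogeneity: $\Sigma_\ell$ is a single $\mathrm{GL}_n$-orbit, and I can compute its stabilizer dimension directly as a fallback.
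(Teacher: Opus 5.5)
Your proposal is correct and follows the paper's proof essentially step for step. The paper likewise writes $\Sigma_\ell$ as a difference of the closed loci $\{\dim \mathbb{U}\cap\mathbb{V}\ge \ell\}$, fibres it over $\Gr(\ell,\mathbb{F}^n)$ via $\mathbb{U}\cap\mathbb{V}$, identifies each fibre with the open set of transversal pairs in $\Gr(k-\ell,\mathbb{F}^{n-\ell})^2$ (nonempty because $\ell\ge 2k-n$), and adds dimensions. Your detour through the incidence variety $\widetilde\Sigma$ only makes rigorous the regularity of the intersection map, which the paper takes for granted.
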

\begin{proof}
By definition,  we have $\overline{\Sigma}_\ell  = \big\lbrace
(\mathbb{U},\mathbb{V}) \in \Gr(k,\mathbb{F}^n) \times \Gr(k,\mathbb{F}^n): \dim \mathbb{U} \cap \mathbb{V} \ge \ell\big\rbrace$. Thus,  $\Sigma_\ell = \overline{\Sigma}_\ell \setminus \overline{\Sigma}_{\ell+1}$ is a quasi-projective subvariety.  Next,  we define a map $\varphi: \Sigma_\ell \to \Gr(\ell,  \mathbb{F}^n)$ by $\varphi(\mathbb{U},  \mathbb{V}) = \mathbb{U} \cap \mathbb{V}$.  Since $2 k - n \le \ell$,  $\varphi$ is surjective.  Moreover,  given any $\mathbb{W} \in \Gr(\ell,  \mathbb{F}^n)$,  we have 
\[
\varphi^{-1}(\mathbb{W}) \simeq \big\lbrace
(\mathbb{U},  \mathbb{V}) \in \Gr(k-\ell,  \mathbb{F}^{n -\ell})\times \Gr(k-\ell,  \mathbb{F}^{n -\ell}):  \mathbb{U} \cap \mathbb{V} = \{0\}
\big\rbrace.
\]
In particular,
\[
\dim \varphi^{-1}(\mathbb{W})
= 2\dim \Gr(k-\ell,\mathbb{F}^{\,n-\ell})
= 2(k-\ell)\big((n-\ell)-(k-\ell)\big)
= 2(k-\ell)(n-k),
\]
where we used $\dim \Gr(r,\mathbb{F}^{N})=r(N-r)$ and that the condition
$\mathbb{U}\cap\mathbb{V}=\{0\}$ defines a nonempty Zariski open subset of
$\Gr(k-\ell,\mathbb{F}^{\,n-\ell})\times \Gr(k-\ell,\mathbb{F}^{\,n-\ell})$. Therefore,
\[
\dim \Sigma_\ell
= \dim \Gr(\ell,\mathbb{F}^n)+\dim \varphi^{-1}(\mathbb{W})
= \ell(n-\ell)+2(k-\ell)(n-k)
= 2k(n - k + \ell) - \ell(n+\ell).\qedhere
\]
\end{proof}
We will also need the following elementary inequality.  
\begin{lemma}\label{lem-inequality}
If $m,n,k,d,\ell \in \mathbb{N}$ satisfy
\[
n+\ell-2k\ge 0, \quad n>k\ge \ell\ge d\ge 2, \quad m\ge 2,\quad k(n-k)\ge m\binom{k}{d},
\]
then we have $\ell(n+\ell-2k)\ge m\binom{\ell}{d}$.
\end{lemma}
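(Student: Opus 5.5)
The plan is to reduce the claim to a monotonicity statement by dividing out the factor $\ell$, which is allowed since $\ell\ge d\ge 2>0$. The identity $\binom{\ell}{d}/\ell = \binom{\ell-1}{d-1}/d$ turns the linear-in-$\ell$ term and the binomial term into comparable quantities. Define, for integers $j\ge d$,
\[
g(j) \coloneqq n+j-2k-\frac{m}{d}\binom{j-1}{d-1},
\]
so that $j(n+j-2k)-m\binom{j}{d} = j\,g(j)$. The hypothesis $k(n-k)\ge m\binom{k}{d}$ says exactly that $g(k)\ge 0$ (divide by $k>0$). The desired conclusion is equivalent to $g(\ell)\ge 0$.

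Next I compare $g(\ell)$ with $g(k)$ for $d\le \ell\le k$. We have
\[
g(\ell) = g(k) - (k-\ell) + \frac{m}{d}\left[\binom{k-1}{d-1}-\binom{\ell-1}{d-1}\right].
\]
By Pascal's rule the bracket telescopes to $\sum_{j=\ell}^{k-1}\binom{j-1}{d-2}$. Each summand has $j-1\ge \ell-1\ge d-1$, so $\binom{j-1}{d-2}\ge\binom{d-1}{d-2}=d-1$. Hence the bracket is at least $(k-\ell)(d-1)$. This gives
\[
g(\ell)\ \ge\ g(k) + (k-\ell)\left(\frac{m(d-1)}{d}-1\right).
\]

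It remains to check that $m(d-1)/d\ge 1$. Since $m\ge 2$ and $d\ge 2$, we have $m(d-1)/d\ge 2(d-1)/d = 2-2/d\ge 1$. Therefore $g(\ell)\ge g(k)\ge 0$, and multiplying back by $\ell$ yields $\ell(n+\ell-2k)\ge m\binom{\ell}{d}$.

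The case $\ell=k$ is immediate from the hypothesis; the hypotheses $n+\ell-2k\ge 0$ and $n>k$ are not needed beyond ensuring the setting is meaningful. The only delicate point is the constant in the lower bound on the bracket. The crude bound $\binom{j-1}{d-2}\ge 1$ would only give the factor $m/d$, which fails when $d>m$. The fix is to use $j\ge d$ to get $d-1$ per summand, and this is exactly where both $m\ge 2$ and $\ell\ge d$ are used.
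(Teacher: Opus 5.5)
Your proof is correct, and it is a genuinely different and shorter route than the paper's.

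The paper works with $h(x)=m\binom{x-1}{d-1}-d(n+x-2k)$, which is $-d\,g(x)$ in your notation. It notes that $h$ is convex on $[d,\infty)$ and concludes $h(\ell)\le\max\{h(d),h(k)\}$. The endpoint $h(k)\le 0$ is the hypothesis. The other endpoint, $h(d)=m-d(n+d-2k)$, takes extra work. The paper first proves $d>2k-n$ by contradiction, which is where $m\ge 2$ enters. It then shows $n-k\le (k-d+1)(n+d-2k)\le\binom{k-1}{d-1}(n+d-2k)$ using a small product inequality.

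You instead compute the integer increments directly:
$g(j)-g(j+1)=\frac{m}{d}\binom{j-1}{d-2}-1\ge\frac{m(d-1)}{d}-1\ge 0$ for $j\ge d$.
So $g$ is nonincreasing on the integers of $[d,k]$, and the left endpoint is automatically dominated by the right one. This removes the auxiliary step entirely. It also shows that the hypotheses $n>k$ and $n+\ell-2k\ge 0$ are redundant; in fact $n+\ell-2k\ge\frac{m}{d}\binom{\ell-1}{d-1}>0$ follows from $g(\ell)\ge g(k)\ge 0$. You also get the slightly stronger fact that $\ell\mapsto \ell(n+\ell-2k)-m\binom{\ell}{d}$, divided by $\ell$, is monotone on $[d,k]$.

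The discreteness is essential to your argument. The real-variable extension need not be monotone: for $d=4$ and $m=2$ one gets $h'(4)=-1/3$. This is presumably why a continuous argument reaches for convexity and then has to check both endpoints.
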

\begin{proof}
If $d \le 2 k - n $,  then $n-k \le k - d$,  $d \le k - 1$ and 
\[
m \le \frac{k (n-k)}{ \binom{k}{d}} = \frac{d (n-k)}{ \binom{k-1}{d-1}} \le \frac{d(k-d)}{ \binom{k-1}{d-1} } = \frac{d(d-1)(k-d)}{(k-d+1) \binom{k-1}{d-2}} < \frac{d(d-1)}{ \binom{d}{d-2}} = 2.
\]
This contradicts the assumption that $m \ge 2$. Therefore, we must have $d > 2k - n$. We observe that $\ell(n+\ell-2k)\ge m\binom{\ell}{d}$ is equivalent to $d(n+\ell-2k)\ge m\binom{\ell-1}{d-1}$.  We consider the function 
\[
h(x)=m\dfrac{(x-1)\cdots(x-d+1)}{(d-1)!}-d(n+x-2k).  
\]
We want to show that $h(\ell) \le 0$.  It is straightforward to verify that $h$ is a convex function on $[d,  +\infty)$.  Thus,  it holds that 
\[
h(\ell) \le \max \{ h(d),  h(k) \} = \max \Big\lbrace
m - d(n + d - 2k),  m \binom{k-1}{d-1} - d (n - k)
\Big\rbrace.
\]
Since $k (n-k) \ge m \binom{k}{d}$,  we may derive $m \binom{k-1}{d-1} - d (n - k) \le 0$.  This also implies
\[
m - d(n + d - 2k) \le \frac{d(n - k)}{\binom{k-1}{d-1}} - d(n+ d - 2k) = \frac{d}{\binom{k-1}{d-1}} 
\Big[ 
(n-k) - \binom{k-1}{d-1}(n + d -2k)
\Big].
\]
Now that $d > 2k - n$,  we have $n + d \ge 2k + 1$,  $n - k \ge 1$.  Let $u = 1$,  $v = n - k > 0$,  $s = n + d - 2k > 0$ and $t = k - d + 1 > 0$.  Then $u + v = s + t$ and $u \le s,  t \le v$,  from which we obtain 
\[
n - k = u v \le st = (k - d + 1) (n+d - 2k) \le \binom{k-1}{d-1} (n+d - 2k).  
\]
Consequently, we establish that $h(\ell) \le 0$, as desired. 
\end{proof}

The following theorem corresponds to Theorem~\ref{main theorem-1}\ref{main theorem-1:item1}.
\begin{theorem}\label{main theorem-1a}
If $\mathbb{F}$ is algebraically closed and $m \ge 2$, then $\alpha_{\w}(\mathbb{F},n,d,m) = \max \big\lbrace 
s \in \mathbb{N}_0: s(n-s)\ge m\binom{s}{d}
\big\rbrace$.
\end{theorem}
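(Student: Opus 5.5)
Set $k_0 \coloneqq \max\{s\in\mathbb{N}_0: s(n-s)\ge m\binom{s}{d}\}$. We prove the two inequalities separately, using the incidence variety $I_1$ of \eqref{eq:incidence} and the image $V=\pi_2(I_1)$ of \eqref{eq:V}.

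\textbf{Upper bound.} Take $k=k_0+1$, so $k(n-k)<m\binom{k}{d}$. By Lemma~\ref{lem-irreducible incidence},
\[
\dim V\le \dim I_1=k(n-k)+m\tbinom{n}{d}-m\tbinom{k}{d}-1<m\tbinom{n}{d}-1=\dim\mathbb{P}(\Alt^d(\mathbb{F}^n,\mathbb{F}^m)).
\]
Since $V$ is closed by Corollary~\ref{coro-irreducible and closed}, it is a proper closed subset. Hence a generic $T$ satisfies $\alpha_{\w}(T)<k$, and therefore $\alpha_{\w}(\mathbb{F},n,d,m)\le k_0$. (If $k>n$ the bound is trivial.)

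\textbf{Lower bound.} Take $k=k_0$. We must show $V$ is the whole projective space, i.e.\ that $\pi_2$ is surjective. Since $\pi_2$ is closed and $I_1$ is irreducible, it suffices to show $\dim V=\dim I_1-(k(n-k)-m\binom{k}{d})$. By the fiber dimension formula (Lemma~\ref{dimension of fibre}(a)), this is equivalent to the generic fiber of $\pi_2$ over $V$ having dimension exactly $k(n-k)-m\binom{k}{d}\ge 0$. Equivalently, we must rule out the possibility that every fiber has larger dimension. We may assume $d\le k<n$: for $k<d$ every subspace is isotropic, and $k=n$ forces $\binom{n}{d}=0$.

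\textbf{Fiber bound via a double incidence.} We bound the fiber dimension through
\[
J=\{(\mathbb{U},\mathbb{V},[T]): T|_{\mathbb{U}}=T|_{\mathbb{V}}=0\},
\]
stratified by $(\mathbb{U},\mathbb{V})\in\Sigma_\ell$. For $\dim\mathbb{U}\cap\mathbb{V}=\ell$, the maps vanishing on both $\mathbb{U}$ and $\mathbb{V}$ form a linear space of codimension $m\big(2\binom{k}{d}-\binom{\ell}{d}\big)$, since $\Lambda^d\mathbb{U}\cap\Lambda^d\mathbb{V}=\Lambda^d(\mathbb{U}\cap\mathbb{V})$. Using Lemma~\ref{lem-intersection variety}, the stratum $J_\ell$ has dimension
\[
2k(n-k+\ell)-\ell(n+\ell)+m\tbinom{n}{d}-2m\tbinom{k}{d}+m\tbinom{\ell}{d}-1 .
\]
Let $f$ be the generic fiber dimension of $\pi_2$, and suppose $f>e:=k(n-k)-m\binom{k}{d}$. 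Then over generic $[T]\in V$ the fiber $F_T\subset\Gr(k,\mathbb{F}^n)$ has dimension $f$, so $\dim J\ge \dim V+2f=\dim I_1+f$. Hence some stratum $J_\ell$ dominates $V$ with fiber dimension at least $2f$. Comparing $\dim J_\ell$ with $\dim I_1+f>\dim I_1+e$ gives
\[
\ell(n+\ell-2k)<m\tbinom{\ell}{d}.
\]
For $\ell<d$ the right side is $0$, so this is impossible once $n+\ell-2k\ge0$, which holds automatically since $\ell\ge 2k-n$. For $\ell\ge d$ it contradicts Lemma~\ref{lem-inequality}, which applies because $k(n-k)\ge m\binom{k}{d}$ and $m\ge2$. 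The case $\ell=k$ (the diagonal) contributes only $\dim I_1$ and must be excluded separately.

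\textbf{Main obstacle.} The delicate step is making the stratum comparison rigorous. The diagonal $\ell=k$ gives dimension $\dim I_1$, not $\dim I_1+f$. So we must argue that, in $F_T\times F_T$ of dimension $2f$, the off-diagonal strata together have dimension $2f$ whenever $f>0$. Each $\ell$-stratum of $F_T\times F_T$ therefore has dimension at most $\dim J_\ell-\dim V$, and summing the correct strata is what is needed. The cleanest route is to pick the $\ell$ realized generically on a component of $F_T\times F_T$. For $f\ge1$ that component has $\ell<k$, and then we apply the computation above with $2f$ in place of $f+\dim$.

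If that fails, the fallback is to note that the case $f>e\ge0$ with $m\ge2$ is exactly what Lemma~\ref{lem-inequality} is designed to exclude. The condition $m\ge2$ is used there and nowhere else, which is consistent with the exceptions appearing for $m=1$.
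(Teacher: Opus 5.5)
Your argument is the paper's (the same incidence $I_1$, the same double incidence stratified by $\Sigma_\ell$, and Lemma~\ref{lem-inequality} to bound every stratum by $\dim I_1+e$), and your ``Fiber bound'' paragraph already completes it; the ``main obstacle'' is illusory, because $\dim J=\max_\ell\dim J_\ell$ and the diagonal stratum $J_k\cong I_1$ is covered by Lemma~\ref{lem-inequality} at $\ell=k$, where it reduces to $k(n-k)\ge m\binom{k}{d}$, so no fiberwise analysis of $F_T\times F_T$ is needed. The only thing to add is the case $d=1$, which you must settle separately by rank--nullity as the paper does: Lemma~\ref{lem-inequality} needs $d\ge 2$, and for $d=1$, $k=n-m$ the stratum excess $m\ell-\ell(n+\ell-2k)=\ell(k-\ell)$ is positive for $0<\ell<k$.
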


\begin{proof} 
It is sufficient to compute $\alpha_\w(\mathbb{F},n,d,m)$ for $2 \le d \le n$, since $\alpha_\w(\mathbb{F},n,1,m) = \max \{0,n-m\}$ and $\alpha_\w(\mathbb{F},n,d,m) = n$ if $d > n$. 

Suppose $m \ge 2$. Denote $k_0 \coloneq \max \big\lbrace s \in \mathbb{N}_0: s(n-s) \ge m \binom{s}{d}\big\rbrace$.  According to the discussion in Section~5 of \cite{FP92},  $k \coloneqq \alpha_{\w}(\mathbb{F},n,d,m)$ must satisfy the inequality $k (n-k) \ge m \binom{k}{d}$.  This implies $\alpha_{\w}(\mathbb{F},n,d,m) \le k_0$.  It is thus left to prove the reversed inequality.  Let $I_1 \subseteq \Gr(k_0,\mathbb{F}^n) \times \mathbb{P} ( \Alt^d(\mathbb{F}^n,  \mathbb{F}^m) )$ and $V \subseteq \mathbb{P} ( \Alt^d(\mathbb{F}^n,  \mathbb{F}^m) )$ be the sets respectively defined by \eqref{eq:incidence} and \eqref{eq:V} for parameters $n,d,m,  k_0$.  According to Lemma~\ref{lem-irreducible incidence} and Corollary~\ref{coro-irreducible and closed},  $I_1$ and $V$ are irreducible projective varieties and $V = \pi_2 (I_1)$ where $\pi_2 :I_1 \to \mathbb{P} ( \Alt^d(\mathbb{F}^n,  \mathbb{F}^m) )$ is the projection map to the second component. 

By  Lemma~\ref{dimension of fibre},  we have $\dim \pi_2^{-1} ([T]) = \dim I_1 - \dim V$ for a generic $[T] \in V$.  The inequality $\alpha_{\w}(\mathbb{F},n,d,m) \ge k_0$ is equivalent to $V = \mathbb{P} ( \Alt^d(\mathbb{F}^n,  \mathbb{F}^m) )$.  We proceed by contradiction.  Assume that $V \ne \mathbb{P} ( \Alt^d(\mathbb{F}^n,  \mathbb{F}^m) )$.  We consider 
\[
I_2 \coloneqq \big\lbrace
(\mathbb{U},\mathbb{V},  [T]) \in \Gr(k_0,\mathbb{F}^n) \times \Gr(k_0,\mathbb{F}^n) \times \mathbb{P} ( \Alt^d(\mathbb{F}^n,  \mathbb{F}^m) ): T|_{\mathbb{U}} = 0,\; T|_{\mathbb{V}} = 0
\big\rbrace.
\]
Let $\psi_3: I_2 \to \mathbb{P} ( \Alt^d(\mathbb{F}^n,  \mathbb{F}^m) )$ be the projection map to the third component.  Then clearly we have $\psi_3(I_2) = V$ and for any $[T] \in V$,  it holds that $\dim \psi_3^{-1}([T])  = 2 \dim \pi_2^{-1}([T]) $. Thus,  Lemma~\ref{dimension of fibre} implies
\begin{equation}\label{main theorem-1:eq1}
\dim I_2 \ge \dim \psi_3^{-1}([T]) + \dim V 
\ge 2\dim \pi_2^{-1}([T]) + \dim V = 2 \dim I_1  - \dim V
\end{equation}
for a generic $[T] \in V$.  Lemma~\ref{lem-irreducible incidence} and the assumption that $V \ne \mathbb{P} (\Alt^d(\mathbb{F}^n,\mathbb{F}^m))$ imply 
\[
\dim I_1 = (n - k_0)k_0 + m \bigg[ \binom{n}{d} - \binom{k_0}{d} \bigg] - 1,\quad \dim V \le m\binom{n}{d} - 2.
\]
Therefore,  we may obtain from \eqref{main theorem-1:eq1} that 
\begin{equation}\label{main theorem-1:eq2}
\dim I_2 \ge 2 (n - k_0)k_0 + m \binom{n}{d} - 2m \binom{k_0}{d}
\end{equation}

On the other hand,  let $\psi_{12}: I_2 \to \Gr(k_0,\mathbb{F}^n) \times \Gr(k_0,\mathbb{F}^n)$ be the projection map to the first two components.  We partition $\Gr(k_0,\mathbb{F}^n) \times \Gr(k_0,\mathbb{F}^n)$ as $\Gr(k_0,\mathbb{F}^n) \times \Gr(k_0,\mathbb{F}^n) = \bigsqcup_{\ell = \max\{0, 2k_0-n\}}^{k_0} \Sigma_\ell$,  where for each $\max\{0, 2k_0-n\} \le \ell \le k_0$,  $\Sigma_\ell$ is defined as in \eqref{eq:Sigma}.  Note that for each $(\mathbb{U},\mathbb{V})\in \Sigma_\ell$, we have 
\[
\dim \psi_{12}^{-1} (\mathbb{U},\mathbb{V}) = m \bigg[ \binom{n}{d} - 2 \binom{k_0}{d} + \binom{\ell}{d} \bigg] - 1.
\]
Thus, Lemma~\ref{dimension of fibre} together with Lemma~\ref{lem-intersection variety} leads to 
\begin{align}
\dim I_2 &= \max_{\max \{0, 2k_0-n\} \le \ell  \le k_0} \big\lbrace \dim \psi_{12}^{-1}(\mathbb{U},  \mathbb{V}) + \dim \Sigma_\ell  \big\rbrace \nonumber \\
&= \max_{\max \{0, 2k_0-n\} \le \ell  \le k_0}  \bigg\lbrace
m\bigg[ \binom{n}{d}-2\binom{k_{0}}{d}+\binom{\ell}{d}\bigg] - 1 + 2k_0(n-k_0+\ell)-\ell(n+\ell)
 \bigg\rbrace.  \nonumber \\
 &=2k_0(n-k_0)  + m\bigg[\binom{n}{d}-2\binom{k_0}{d}\bigg] - 1 +  \max_{\max \{0, 2k_0-n\} \le \ell  \le k_0}  \bigg\lbrace 
m\binom{\ell}{d}-\ell(n+\ell-2k_0)
 \bigg\rbrace.  \label{main theorem-1:eq3}
\end{align}
If $\ell < d$,  then $\binom{\ell}{d} = 0$ and $m\binom{\ell}{d}-\ell(n+\ell-2k_0) \le 0$.  If $\ell \ge d$,  then since $k_0(n - k_0) \ge m \binom{k_0}{d}$,  $n + \ell - 2k_0 \ge 0$,  $n > k_0 \ge \ell$ and $d,  m \ge 2$, Lemma~\ref{lem-inequality} shows $m\binom{\ell}{d}-\ell(n+\ell-2k_0) \le 0$.  Therefore,  we arrive at a contradiction by virtue of \eqref{main theorem-1:eq2} and \eqref{main theorem-1:eq3}.
\end{proof}

Next, we prove Theorem~\ref{main theorem-1}\ref{main theorem-1:item2}. To this end, we establish some lemmas.

\begin{lemma}\label{n-2 case}
Suppose $n \ge 3$. There exists $T \in \Alt^{n-2}(\mathbb{F}^n,\mathbb{F})$ such that $\alpha_{\Lambda}(T)\le n-2$ if and only if $n$ is even.
\end{lemma}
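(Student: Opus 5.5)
The plan is to translate the statement into linear algebra of bivectors via the duality $\Lambda^{n-2}(\mathbb{F}^n)^* \cong \Lambda^2\mathbb{F}^n$, after fixing a volume form. First note what the statement asks. An isotropic subspace of dimension $n$ would force $T=0$. Hence for $T\neq 0$ we have $\alpha_{\w}(T)\le n-2$ exactly when $T$ has no isotropic hyperplane. (For $T=0$ we get $\alpha_{\w}(T)=n$, so only nonzero $T$ are relevant.) The claim therefore becomes: every $T\in\Alt^{n-2}(\mathbb{F}^n,\mathbb{F})$ admits an isotropic hyperplane if and only if $n$ is odd.

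\textbf{Step 1: the identification.} Fix the standard basis $e_1,\dots,e_n$ and the identification $\Lambda^n\mathbb{F}^n\cong\mathbb{F}$. Every alternating $(n-2)$-form can then be written uniquely as
\[
T(v_1,\dots,v_{n-2}) = \omega\wedge v_1\wedge\cdots\wedge v_{n-2}
\]
for some $\omega=\sum_{i<j}\omega_{ij}\, e_i\wedge e_j\in\Lambda^2\mathbb{F}^n$. This follows from the perfect pairing $\Lambda^2\times\Lambda^{n-2}\to\Lambda^n$, which holds over any field, including characteristic $2$. Equivalently, $\omega$ is encoded by an alternating $n\times n$ matrix $A=(\omega_{ij})$, where alternating means $A^{\top}=-A$ with zero diagonal. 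Every alternating matrix arises this way.

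\textbf{Step 2: isotropic hyperplanes correspond to kernel vectors.} Let $H$ be a hyperplane. After a change of basis by $g\in GL_n$, under which $\omega\mapsto g\omega$ and the matrix transforms by congruence $A\mapsto gAg^\top$, we may assume $H=\spa\{e_1,\dots,e_{n-1}\}$. Then $\Lambda^{n-2}H$ is spanned by the wedges $e_1\wedge\cdots\wedge\widehat{e_i}\wedge\cdots\wedge e_{n-1}$ for $i<n$. Wedging $\omega$ with such an element picks out exactly $\pm\omega_{in}\,e_1\wedge\cdots\wedge e_n$. Hence $T|_H=0$ iff $\omega_{in}=0$ for all $i$, i.e.\ iff $\omega\in\Lambda^2H$.

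In invariant terms, write $H=\ker f$ with $f\in(\mathbb{F}^n)^*$. Then $T|_H=0$ iff the contraction $\iota_f\omega=0$, that is, iff $f$ lies in the kernel of $A$ viewed as a map $(\mathbb{F}^n)^*\to\mathbb{F}^n$. So $T$ has an isotropic hyperplane iff $A$ is singular.

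\textbf{Step 3: conclude with the parity of alternating matrices.} If $n$ is odd, every alternating matrix is singular, since its rank is even. This holds in every characteristic, because alternating forms admit a symplectic basis. Hence every $T$ has an isotropic hyperplane, so $\alpha_{\w}(T)\ge n-1$. If $n$ is even, take $\omega=e_1\wedge e_2+e_3\wedge e_4+\cdots+e_{n-1}\wedge e_n$, whose matrix is nondegenerate. The corresponding $T$ has no isotropic hyperplane, and $T\neq 0$, so $\alpha_{\w}(T)\le n-2$.

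The only step requiring care is Step 2: pinning down signs and checking that the correspondence is $GL_n$-equivariant, so that the reduction to a coordinate hyperplane is legitimate. I expect this to be routine. The characteristic-$2$ case needs no special treatment, because the objects throughout are alternating (not merely skew-symmetric) forms, for which the even-rank theorem holds.
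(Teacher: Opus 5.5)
Your proposal is correct and follows essentially the same route as the paper. Both arguments dualize the $(n-2)$-form to an alternating bilinear form: your bivector $\omega\in\Lambda^2\mathbb{F}^n$ is the paper's $\tau(T)\in\Alt^2((\mathbb{F}^n)^\ast,\mathbb{F})$. Both then identify isotropic hyperplanes $\ker f$ with nonzero vectors $f$ in the radical of that form, so that $\alpha_{\w}(T)\le n-2$ exactly when the form is non-degenerate, which is possible if and only if $n$ is even.

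The only difference is cosmetic. You check the hyperplane criterion by direct coordinates ($T|_H=0 \iff \omega\in\Lambda^2H \iff \iota_f\omega=0$), whereas the paper goes through the divisibility criterion $\varepsilon(T)=\eta\wedge T_1 \iff \eta\wedge\varepsilon(T)=0$.
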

\begin{proof}
Denote $\mathbb{V} \coloneqq (\mathbb{F}^n)^\ast$. Then there is a canonical isomorphism $\varepsilon: \Alt^{n-2}(\mathbb{F}^n,\mathbb{F}) \to \Lambda^{n-2} \mathbb{V}$ given by 
\[
T(u_1,\dots, u_{n-2}) = \langle \varepsilon(T), u_1 \wedge \cdots \wedge u_{n-2} \rangle,\quad u_1,\dots, u_{n-2} \in \mathbb{F}^n.
\]
By choosing a basis of $\mathbb{V}$, we may identify $\Lambda^n \mathbb{V}$ with $\mathbb{F}$ to obtain a linear map $\tau: \Alt^{n-2}(\mathbb{F}^n,\mathbb{F}) \to \Alt^2(\mathbb{V},\mathbb{F})$ where $\tau(T) \in \Alt^2(\mathbb{V},\mathbb{F})$ is defined by 
\[
\tau(T)(\eta_1,\eta_2) \coloneqq \varepsilon(T) \wedge \eta_1 \wedge \eta_2 \in \Lambda^n (\mathbb{V}) \cong \mathbb{F},\qquad \eta_1,\eta_2 \in \mathbb{V}.
\]  
Clearly, both $\varepsilon$ and $\tau$ are isomorphisms of vector spaces.

Given a nonzero $T \in \Alt^{n-2}(\mathbb{F}^n,\mathbb{F})$, we have $\alpha_{\Lambda}(T)= n-1$ if and only if there is some $\mathbb{U} \in \Gr(n-1,\mathbb{F}^n)$ such that $\langle \varepsilon(T), \Lambda^{n-2} \mathbb{U} \rangle = 0$. The latter is equivalent to $\varepsilon(T) = \eta \wedge T_1$ for some $\eta \in \mathbb{U}^{\perp} \subseteq \mathbb{V}$ and $T_1 \in \Lambda^{n-3} \mathbb{V}$. This implies that $\alpha_{\Lambda}(T)=n-1$ if and only if $\eta \wedge \varepsilon(T) = 0$ for some nonzero $\eta \in \mathbb{V}$. Consequently, $T \in \Alt^{n-2}(\mathbb{F}^n,\mathbb{F})$ satisfies $\alpha_{\w}(T) \le n -2$ if and only if $\tau(T)$ is a non-degenerate alternating bilinear form on $\mathbb{V}$, which exists if and only if $n$ is even.
\end{proof}

\begin{lemma}\label{7-3 case}
For any algebraically closed field $\mathbb{F}$, we have $\alpha_{\Lambda}(\mathbb{F}, 7,3,1)\le 4$.
\end{lemma}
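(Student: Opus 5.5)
The plan is to exhibit one explicit $T\in\Alt^3(\mathbb{F}^7,\mathbb{F})$ with $\alpha_{\w}(T)\le 4$. By definition $\alpha_{\w}(\mathbb{F},7,3,1)=\min_T\alpha_{\w}(T)$, so one such $T$ suffices.

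Note first that $k=5$ satisfies $k(n-k)=10\ge\binom{5}{3}=10$. By Lemma~\ref{lem-irreducible incidence}, $\dim I_1=10+35-10-1=34=\dim\mathbb{P}(\Alt^3(\mathbb{F}^7,\mathbb{F}))$. So the dimension count alone does not rule out $V=\mathbb{P}(\Alt^3)$, and a concrete witness is genuinely needed. My candidate is the $G_2$ (split octonion) $3$-form on $\mathbb{V}=(\mathbb{F}^7)^\ast$:
\[
\varphi=e^{123}+e^{145}+e^{167}+e^{246}-e^{257}-e^{347}-e^{356}.
\]
Its coefficients are $\pm1$, so it makes sense in every characteristic.

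The first step is a reformulation, in the style of the proof of Lemma~\ref{n-2 case}. Identify $T$ with $\varepsilon(T)=\varphi\in\Lambda^3\mathbb{V}$. A subspace $\mathbb{U}\in\Gr(5,\mathbb{F}^7)$ is isotropic iff $\langle\varphi,\Lambda^3\mathbb{U}\rangle=0$. Let $\eta_1,\eta_2$ be a basis of $\mathbb{U}^\perp$ and extend it to a basis of $\mathbb{V}$. The restriction of $\varphi$ to $\Lambda^3\mathbb{U}$ vanishes iff $\varphi$ lies in the ideal generated by $\eta_1,\eta_2$, which happens iff $\varphi\wedge\eta_1\wedge\eta_2=0$ in $\Lambda^5\mathbb{V}$. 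So it suffices to prove the following: for every nonzero $\eta\in\mathbb{V}$, the linear map
\[
L_\eta:\mathbb{V}\to\Lambda^5\mathbb{V},\qquad \theta\mapsto\varphi\wedge\eta\wedge\theta,
\]
has kernel exactly $\mathbb{F}\eta$. Equivalently, the $7\times 21$ matrix of $L_\eta$, whose entries are linear in the coordinates of $\eta$, has rank $6$ for every $\eta\ne0$.

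The second step reduces the set of $\eta$ to check, using symmetry. The split group $G_2$ stabilizes $\varphi$; it is a smooth group scheme over $\mathbb{Z}$. Over an algebraically closed field of characteristic $\ne 2$, it acts on nonzero vectors of the $7$-dimensional representation with two orbits: non-isotropic and isotropic vectors for the invariant quadratic form. Since $L_{g\eta}$ is conjugate to $L_\eta$, it would then suffice to verify $\ker L_\eta=\mathbb{F}\eta$ for two representatives. I would take $\eta=e^1$ and a null vector such as $e^2+\sqrt{-1}\,e^3$, or a null coordinate vector if I rewrite $\varphi$ in a hyperbolic basis. Each check is a direct computation of which wedge monomials $\varphi\wedge\eta\wedge e^j$ vanish. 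For $\eta=e^1$, for example, $\varphi\wedge e^1=e^{1246}-e^{1257}-e^{1347}-e^{1356}$, and one checks that wedging with $e^j$ for $j\ge2$ gives $7-1$ linearly independent $5$-forms.

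The main obstacle is making the argument characteristic-free, especially in characteristic $2$, where the orbit structure of $G_2$ on the $7$-dimensional module degenerates. There the invariant quadratic form has a radical and the module is no longer irreducible. My first attempt would avoid group theory altogether. Write $\eta=\sum a_ie^i$ and $\theta=\sum b_ie^i$, expand $\varphi\wedge\eta\wedge\theta$, and show directly that the $21$ coefficient equations $a_ib_j-a_jb_i$-type bilinear combinations force $\theta\parallel\eta$. Concretely, I would pick the $2\times2$ minors $a_ib_j-a_jb_i$ appearing in several coefficients and show they must all vanish. Because all coefficients of $\varphi$ are $\pm1$ and each relevant coefficient should involve only a few minors with $\pm1$ signs, I expect the deduction to go through over $\mathbb{Z}$ and hence in every characteristic. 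If characteristic $2$ still fails for this specific $\varphi$, I would handle it separately with another explicit $3$-form, for example a generic-looking form checked by the same minor computation.
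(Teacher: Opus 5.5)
There is a genuine gap, and it is the characteristic-$2$ case. Your reformulation is correct and characteristic-free: for a basis $\eta_1,\eta_2$ of $\mathbb{U}^\perp$, $\mathbb{U}$ is isotropic iff $\varphi\wedge\eta_1\wedge\eta_2=0$. Away from $2$ your plan works and is a legitimate alternative to the paper's octonion cross-product argument. In fact, since $*(\varphi\wedge\cdot)$ acts by $2$ and $-1$ on $\Lambda^2_7\oplus\Lambda^2_{14}$, the integer matrix of $\omega\mapsto\varphi\wedge\omega$, $\Lambda^2\to\Lambda^5$, has determinant $\pm 2^7$, so you do not even need the orbit reduction when $\ch(\mathbb{F})\neq 2$.

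In characteristic $2$, however, your expectation that ``the deduction goes through over $\mathbb{Z}$'' is false for this $\varphi$. Modulo $2$ it becomes the Fano-plane form $\sum_L e^L$, and this form \emph{does} have a $5$-dimensional isotropic subspace, for example
\[
\mathbb{U}=\{x\in\mathbb{F}^7:\ x_1+x_3+x_5+x_7=0,\ x_2+x_4+x_6=0\}.
\]
Equivalently, $\varphi\wedge(e^1+e^3+e^5+e^7)\wedge(e^2+e^4+e^6)=0$ when $\ch(\mathbb{F})=2$. You can check directly that $\varphi$ vanishes on all ten triples from the basis $e_1+e_3,\ e_1+e_5,\ e_1+e_7,\ e_2+e_4,\ e_2+e_6$.

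Conceptually, this happens because any two Fano lines meet, so $\varphi\wedge\iota_v\varphi\equiv 0 \pmod 2$ for every $v$. Label $1,\dots,7$ by binary expansion, so the lines are $\{x,y,x+y\}$. Then for $v=\sum_x\ell(x)e_x$ with $\ell$ nonzero and additive, $\iota_v\varphi=\bigl(\sum_x\ell(x)e^x\bigr)\wedge\bigl(\sum_x e^x\bigr)$ is a nonzero decomposable $2$-form. This is also why your remark about the smooth split $G_2$ over $\mathbb{Z}$ does not apply. The form you wrote is the compact one, which is not $GL_7(\mathbb{Z})$-equivalent to the Chevalley form and degenerates exactly at $2$ (hence the power of $2$ in the determinant). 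Note also that $\ker L_{e^1}=\mathbb{F}e^1$ still holds mod $2$; the failure occurs at other $\eta$, because the orbit reduction breaks down there.

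Your fallback, ``another explicit $3$-form, for example a generic-looking form checked by the same minor computation,'' is not an argument: no form is named and nothing is verified, and this is precisely the hard part of the lemma. The paper handles characteristic $2$ with the split form $e^{125}+e^{136}+e^{147}+e^{234}+e^{567}$, which must therefore be inequivalent to the Fano form over $\overline{\mathbb{F}}_2$. It shows by a Gröbner basis computation over $\mathbb{F}_2$, chart by chart on $\Gr(5,\mathbb{F}^7)$, that this form has no $5$-dimensional isotropic subspace.

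Your wedge criterion could be applied to that split form instead; for instance, $\ker L_{e^1}=\mathbb{F}e^1$ and $\ker L_{e^2}=\mathbb{F}e^2$ follow by inspection. But reducing all $\eta\in\mathbb{P}^6$ to finitely many representatives would need a characteristic-$2$ orbit analysis of $G_2$, where the $7$-dimensional module is no longer irreducible. Without that, or a computer check as in the paper, the characteristic-$2$ case of the lemma is unproved in your proposal.
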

\begin{proof}
We split our discussion into two cases.

\textbf{Case 1: $\mathrm{char}(\mathbb{F})\neq 2$.}
Let $\mathbb{O}$ be the octonion algebra over $\mathbb{F}$. We write $\mathbb{O} = \mathbb{F} \oplus \mathbb{V}$, where $\mathbb{V} \cong \mathbb{F}^7$ is the subspace of pure octonions in $\mathbb{O}$. Define a trilinear map
\[
T:\mathbb{V}\times \mathbb{V}\times \mathbb{V}\to \mathbb{F},\quad
T(v_1,v_2,v_3) = \langle v_1\times v_2, v_3 \rangle,
\]
where $u\times v := uv-vu$, and $\langle u,v\rangle \coloneqq u\bar{v}+ v \bar{u}$. We show that $\alpha_\w(T) \le 4$. Assume for contradiction that $T$ has a $5$-dimensional isotropic subspace $\mathbb{U}\subseteq \mathbb{V}$. Then there is a bilinear map
\[
f_T:\mathbb{U}\times \mathbb{U} \to \mathbb{U}^\perp,\qquad f_T(v_1,v_2)=v_1\times v_2,
\]
where $\mathbb{U}^\perp$ is the orthogonal complement of $\mathbb{U}$ with respect to $\langle\cdot,\cdot\rangle$. If $\langle u, u \rangle = 0$ for any $u \in \mathbb{U}$, then we may deduce that $\mathbb{U} \subseteq \mathbb{U}^{\perp}$. This yields a contradiction since $\dim \mathbb{U}^{\perp} = 2$ by the non-degeneracy of $\langle\cdot,\cdot\rangle$. Thus, there exists $u_0\in \mathbb{U}\setminus \mathbb{U}^{\perp}$. By definition, we have $\langle u_0,u_0\rangle \neq 0$. Set $\mathbb{W} \coloneqq \mathbb{U} \cap u_0^\perp$. As $u_0^\perp$ is a hyperplane in $\mathbb{V}$, we have $\dim \mathbb{W} =4$. We consider the linear map
\[
\iota: \mathbb{W} \to \mathbb{U}^\perp,\quad \iota(w) = u_0 \times w.
\]
The octonion identity implies:
\[
u_0 \times (u_0 \times w)
= 2\big(\langle u_0,w\rangle u_0 - \langle u_0,u_0\rangle w\big)
= -2\langle u_0,u_0\rangle w.
\]
Since $\langle u_0,u_0\rangle\neq 0$ and $\mathrm{char}(\mathbb{F})\neq 2$, the map $\iota$ is injective. However, we have $\dim \mathbb{W}=4$ and $\dim \mathbb{U}^\perp = 2$, implying that $\iota$ is not injective. 

\textbf{Case 2: $\mathrm{char}(\mathbb{F})=2$.} We prove $\alpha_{\Lambda}(\mathbb{F},7,3,1) \le 4$ by a direct calculation. Let $\{\eta_1,\dots,\eta_7\}$ be the standard basis of $(\mathbb{F}^7)^\ast$. We consider the tensor $T\in \Alt^3 (\mathbb{F}^7, \mathbb{F})$ defined as:
\[
T
= \eta_1\wedge \eta_2\wedge \eta_5
+ \eta_1\wedge \eta_3\wedge \eta_6
+ \eta_1\wedge \eta_4\wedge \eta_7
+ \eta_2\wedge \eta_3\wedge \eta_4
+ \eta_5\wedge \eta_6\wedge \eta_7.
\]
The Grassmannian $\Gr(5,\mathbb{F}^7)$ has the decomposition $\Gr(5,\mathbb{F}^7) = \bigcup_{\substack{I\subseteq [7]\\ |I|=5}} U_I$, where $U_I$ is the Schubert cell indexed by $I$. The alternating form $T$ has a $5$-dimensional isotropic subspace if and only if there exists some $I$ and $\mathbb{V}\in U_I$ such that $T|_\mathbb{V}=0$. This is equivalent to the solvability of a polynomial system on $U_I\cong \mathbb{F}^{r_I}$. Here $0 \le r_I \le 10$ is the dimension of $U_I$. We use Gr\"obner basis over $\mathbb{F}_2$ to test the solvability of these systems.

The Python code in Appendix~\ref{sec:appendB} verifies that for every $I\subseteq [7]$ with $|I| = 5$, the ideal generated by the corresponding polynomial system is the entire polynomial ring. In fact, $20$ of the $21$ polynomial systems yield the contradictory equation $1=0$, while the Gröbner basis of the remaining system is $\{1\}$. Therefore, $T$ has no $5$-dimensional isotropic subspace.
\end{proof}

The theorem that follows corresponds to \ref{main theorem-1:item2} of Theorem~\ref{main theorem-1}.
\begin{theorem}\label{main theorem-1b}
If $\mathbb{F}$ is algebraically closed, then 
\[
\alpha_{\w}(\mathbb{F},n,d,1) = \begin{cases}
\lceil \frac{n}{2} \rceil \quad &\text{if~} d = 2, \\
4 \quad &\text{if~} (d,n) = (3,7),  \\
n - 2 \quad &\text{if~} d = n-2 \text{~is even},  \\
\max \big\lbrace 
s \in \mathbb{N}_0: s(n-s)\ge \binom{s}{d}
\big\rbrace \quad &\text{otherwise}. 
\end{cases}
\]
\end{theorem}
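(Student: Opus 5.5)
Write $k_0 \coloneqq \max\{s\in\mathbb{N}_0: s(n-s)\ge \binom{s}{d}\}$. The proof has three parts. First, dispose of the special cases. Second, prove the upper bound $\alpha_{\w}\le k_0$ in general. Third, prove the lower bound $\alpha_{\w}\ge k_0$ outside the exceptional cases.

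\textbf{Special cases and upper bound.} For $d=2$, a generic alternating form has rank $2\lfloor n/2\rfloor$. Its maximal isotropic subspaces have dimension $n-\lfloor n/2\rfloor=\lceil n/2\rceil$, and every alternating form attains at least this, so the value is $\lceil n/2\rceil$. For $(d,n)=(3,7)$, Lemma~\ref{7-3 case} gives $\le 4$. Moreover $k_0\ge 4$ there, since $4\cdot 3\ge 4$, so the general lower-bound argument below yields equality. For $d=n-2$ with $n$ even, Lemma~\ref{n-2 case} gives $\alpha_{\w}\le n-2$, and $\alpha_{\w}\ge n-2$ is automatic because $\Lambda^{n-2}$ of an $(n-2)$-dimensional space is one-dimensional. (When $n$ is odd, the same lemma gives $n-1$, and one checks this equals $k_0$.) For the general upper bound $\alpha_{\w}\le k_0$, use the dimension count of Section~5 of \cite{FP92}, as in the proof of Theorem~\ref{main theorem-1a}. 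If every $T$ had a $k$-dimensional isotropic subspace, then $\pi_2\colon I_1\to\mathbb{P}(\Alt^d)$ would be surjective. Hence $\dim I_1\ge \binom nd-1$, that is, $k(n-k)\ge\binom kd$ by Lemma~\ref{lem-irreducible incidence}.

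\textbf{Lower bound.} The main issue is to show $V=\mathbb{P}(\Alt^d(\mathbb{F}^n,\mathbb{F}))$ for $k=k_0$ outside the exceptions. Lemma~\ref{lem-inequality} needs $m\ge 2$, so the double-incidence argument breaks down and we need Tevelev. In characteristic zero, \cite{Tevelev01} determines the isotropy index of a generic $T\in\Alt^d(\mathbb{F}^n,\mathbb{F})$ to be exactly the claimed value. By Corollary~\ref{coro-irreducible and closed}, $V$ is closed, so containing a generic point forces $V$ to be everything. Hence every $T$ has a $k_0$-dimensional isotropic subspace.

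\textbf{Positive characteristic.} This is the main obstacle, and the plan is to lift. Let $\mathbb{F}$ have characteristic $p$, and consider the proper morphism $\pi_2\colon I_1\to\mathbb{P}(\Alt^d)$ as a scheme over $\mathbb{Z}$, or over $W(\overline{\mathbb{F}}_p)$. Given $T$ over $\mathbb{F}$, it suffices by a base change and closedness argument to treat $T$ defined over a finitely generated field, and indeed over $\overline{\mathbb{F}}_p$ after specialization. One may also reduce to $\mathbb{F}=\overline{\mathbb{F}}_p$, using that the image of the proper map over $\mathbb{Z}$ is closed: if it contains the whole characteristic-zero fiber, it contains the closure, which is all of $\mathbb{P}(\Alt^d)_{\mathbb{Z}}$ since that scheme is flat and irreducible over $\mathbb{Z}$.

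Concretely, lift the coefficients of $T$ to $W(\overline{\mathbb{F}}_p)$ using Lemma~\ref{property of Witt vector}, obtaining $\widetilde T$ over the characteristic-zero fraction field $\mathbb{K}$. By the characteristic-zero case, $\widetilde T$ has a $k_0$-dimensional isotropic subspace over a finite extension $\mathbb{L}$ of $\mathbb{K}$. By Lemma~\ref{property of local field}, $\mathbb{L}$ carries a complete valuation whose valuation ring $R$ has residue field a finite, hence trivial, extension of $\overline{\mathbb{F}}_p$. The valuative criterion (Lemma~\ref{lem:properness}), applied to the projective Grassmannian, extends the $\mathbb{L}$-point $(\mathbb{V},[\widetilde T])$ of $I_1$ to an $R$-point. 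Reducing modulo the maximal ideal gives a $k_0$-dimensional isotropic subspace for $T$, since isotropy is a closed condition.

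The delicate point is that the exceptional cases depend only on $(n,d)$ and not on $p$. Thus the characteristic-zero answer transfers unchanged in the non-exceptional range. In the exceptional cases, the upper bounds come from the explicit constructions in Lemmas~\ref{n-2 case} and~\ref{7-3 case}, which work in every characteristic. The lower bounds in those cases follow from the same lifting argument applied to the characteristic-zero values.
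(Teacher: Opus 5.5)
Your proposal follows the paper's proof essentially step for step: the upper bounds come from the dimension count of \cite{FP92} together with Lemmas~\ref{n-2 case} and~\ref{7-3 case}, the characteristic-zero lower bound from Tevelev's generic value plus closedness of $V$, and the positive-characteristic case from lifting to $W(\mathbb{F})$, passing to a finite extension, and applying the valuative criterion with the residue field still equal to $\mathbb{F}$. One small correction: for $(d,n)=(3,7)$ one has $k_0=5$ (since $5\cdot 2=\binom{5}{3}$), so the remark that $k_0\ge 4$ is beside the point---the lower bound $4$ comes from Tevelev's exceptional value transferred by lifting, as your final paragraph correctly says.
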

\begin{proof}
According to \cite[Section~5]{FP92}, Lemmas~\ref{n-2 case} and \ref{7-3 case}, we obtain \[
\alpha_{\w}(\mathbb{F},n,d,1) \le 
\begin{cases}
\lceil \frac{n}{2} \rceil \quad &\text{if~} d = 2, \\
4 \quad &\text{if~} (d,n) = (3,7),  \\
n - 2 \quad &\text{if~} d = n-2 \text{~is even},  \\
\max \big\lbrace 
s \in \mathbb{N}_0: s(n-s)\ge \binom{s}{d}
\big\rbrace \quad &\text{otherwise}. 
\end{cases}
\]
By \cite[Theorem~1]{Tevelev01}, equality holds if $\ch (\mathbb{F}) = 0$.

To establish the equality for $\mathbb{F}$ with $\ch (\mathbb{F}) > 0$, we denote by $W(\mathbb{F})$ the ring of Witt vectors of $\mathbb{F}$, and write $\mathbb{K}$ for the field of fractions of $W(\mathbb{F})$. Since $\ch (\mathbb{K}) = 0$, it is sufficient to prove that $k \coloneqq \alpha_{\Lambda}(\overline{\mathbb{K}},n,d,1) \le \alpha_{\Lambda}(\mathbb{F},n,d,1)$. For each $T \in \Alt^d(\mathbb{F}^n,\mathbb{F})$, we show that $T$ has a $k$-dimensional isotropic subspace. Let $\pi: W(\mathbb{F}) \to W(\mathbb{F})/\mathfrak{m} \cong \mathbb{F}$ be the natural projection map, where $\mathfrak{m}$ is the maximal ideal of $W(\mathbb{F})$. By definition, there exists some $\widetilde{T}\in \Alt^d(W(\mathbb{F})^n, W(\mathbb{F}))$ such that $\widetilde{T} \equiv T \pmod{\mathfrak{m}}$, and any rank $k$ direct summand of $W(\mathbb{F})^n$ that are isotropic to $\widetilde{T}$ induces a $k$-dimensional isotropic subspace of $T$. Let $\mathcal{X}_{\widetilde{T}}$ be the closed subscheme of the Grassmannian over $W(\mathbb{F})$ consisting of rank $k$ direct summands of $W(\mathbb{F})^n$ that is isotropic to $\widetilde{T}$. In particular, $\mathcal{X}_{\widetilde{T}}$ is proper over $\operatorname{Spec} W(\mathbb{F})$.

By assumption, $\mathcal{X}_{\widetilde{T}}(\overline{\mathbb{K}}) \ne \emptyset$. Thus, there exists some finite extension $\mathbb{L}/\mathbb{K}$ such that $\mathcal{X}_{\widetilde{T}}(\mathbb{L}) \ne \emptyset$. By Lemma~\ref{lem:properness}, we obtain $\mathcal{X}_{\widetilde{T}}(\mathfrak{O}_{\mathbb{L}}) \ne \emptyset$, where $\mathfrak{O}_{\mathbb{L}}$ is the ring of integers of $\mathbb{L}$. Since $\mathbb{K}$ is complete non-archemidean and $\mathbb{F}$ is algebraically closed, Lemma~\ref{property of local field} implies that $\kappa(\mathbb{L}) = \kappa (\mathbb{K}) = W(\mathbb{F})/\mathfrak{m}\cong \mathbb{F}$. Therefore, a point in $\mathcal{X}_{\widetilde{T}}(\mathfrak{O}_{\mathbb{L}})$ induces a $k$-dimensional isotropic subspace of $T$ in $\mathbb{F}^n$ and this completes the proof.
\end{proof}

\subsection{Tur\'{a}n Problem for $\Hom^d(\mathbb{F}^{n}, \mathbb{F}^m)$}\label{subsec:MultilinearTuran}
We prove Theorem~\ref{main theorem-2} in this subsection.  To begin with,  we consider 
\begin{align}
J_1 &\coloneqq \big\lbrace (\mathbb{U}_1,\dots, \mathbb{U}_{d}, [T])\in \Gr(k, \mathbb{F}^n)^d \times \mathbb{P}( \Hom^d(\mathbb{F}^{n},  \mathbb{F}^m)):  T|_{\mathbb{U}_1 \times \cdots \times \mathbb{U}_d}=0 \big\rbrace,  \label{eq:J1} \\ 
W &\coloneqq \big\lbrace
[T] \in \mathbb{P}(\Hom^d(\mathbb{F}^{n},  \mathbb{F}^m)): \alpha(T) \ge k
\big\rbrace.  \label{eq:W}
\end{align}
The following lemma is an analogue of Lemma~\ref{lem-irreducible incidence}, with an identical proof that is thus omitted.
\begin{lemma}\label{lem:dim J1}
Suppose $k,n$ are integers such that $0 \le k \le n$. The set $J_1$ is an irreducible closed subset of $\Gr(k, \mathbb{F}^n)^d \times \mathbb{P}(\Hom^d(\mathbb{F}^{n},  \mathbb{F}^m))$,  whose dimension is $kd(n-k)+m\left(n^{d}-k^{d}\right)-1$. Let $\pi_2: \Gr(k, \mathbb{F}^n)^d \times \mathbb{P}(\Hom^d(\mathbb{F}^{n},  \mathbb{F}^m)) \to \mathbb{P}(\Hom^d(\mathbb{F}^{n},  \mathbb{F}^m))$ be the natural projection map.  Then $W = \pi_2 (J_1)$,  and it is an irreducible closed subvariety of $\mathbb{P}(\Hom^d(\mathbb{F}^{n},  \mathbb{F}^m))$. 
\end{lemma}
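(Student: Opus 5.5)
The plan is to follow the proof of Lemma~\ref{lem-irreducible incidence} verbatim, replacing the single subspace by a $d$-tuple and the alternating space by $\Hom^d$. The argument has three steps: closedness of $J_1$, then irreducibility and dimension via a bundle structure over $\Gr(k,\mathbb{F}^n)^d$, then the statements about $W$.

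\emph{Closedness.} I introduce the auxiliary variety
\[
Z' \coloneqq \big\lbrace (\mathbb{U}_1,\dots,\mathbb{U}_d,[T],[v_1],\dots,[v_d]) : v_i\in\mathbb{U}_i,\ T(v_1,\dots,v_d)=0\big\rbrace \subseteq \Gr(k,\mathbb{F}^n)^d\times\mathbb{P}(\Hom^d(\mathbb{F}^n,\mathbb{F}^m))\times(\mathbb{P}^{n-1})^d,
\]
which is projective, and let $\pi$ denote its projection onto the first $d+1$ factors. The fibre over $(\mathbb{U}_1,\dots,\mathbb{U}_d,[T])$ lies in $\mathbb{P}(\mathbb{U}_1)\times\cdots\times\mathbb{P}(\mathbb{U}_d)$, which has dimension $d(k-1)$. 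I claim the fibre has full dimension $d(k-1)$ exactly when $T|_{\mathbb{U}_1\times\cdots\times\mathbb{U}_d}=0$. This holds because $\mathbb{P}(\mathbb{U}_1)\times\cdots\times\mathbb{P}(\mathbb{U}_d)$ is irreducible, so a closed subset of full dimension is the whole product. Hence $J_1=\{\dim\pi^{-1}\ge d(k-1)\}$, and $J_1$ is closed by Lemma~\ref{upper-semicontinuous}. Alternatively, closedness follows directly: $J_1$ is cut out by the vanishing of $T$ on the tautological bundles, which is a closed condition locally in charts.

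\emph{Irreducibility and dimension.} Let $\pi_1\colon J_1\to\Gr(k,\mathbb{F}^n)^d$ be the projection. For fixed $(\mathbb{U}_1,\dots,\mathbb{U}_d)$, the maps $T$ vanishing on $\mathbb{U}_1\times\cdots\times\mathbb{U}_d$ form the kernel of the surjective restriction map $\Hom^d(\mathbb{F}^n,\mathbb{F}^m)\to\Hom(\mathbb{U}_1\otimes\cdots\otimes\mathbb{U}_d,\mathbb{F}^m)$. This kernel is a linear subspace of dimension $m(n^d-k^d)$. Surjectivity holds because one can choose bases extending bases of the $\mathbb{U}_i$ and prescribe $T$ freely on the corresponding basis tuples. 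So every fibre of $\pi_1$ is a projective space of dimension $m(n^d-k^d)-1$; in particular all fibres are irreducible and of constant dimension, and $\pi_1$ is surjective. The base $\Gr(k,\mathbb{F}^n)^d$ is irreducible of dimension $dk(n-k)$. Lemma~\ref{thm:irreducible_fibre} then gives irreducibility of $J_1$, and Lemma~\ref{dimension of fibre}(b), applied in both directions, gives
\[
\dim J_1 = kd(n-k)+m(n^d-k^d)-1.
\]
The case $k^d=n^d$, i.e.\ $k=n$, makes the fibre empty. In that degenerate case $J_1=\emptyset$ and the statement is vacuous, or one excludes it as the paper implicitly does.

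\emph{The set $W$.} By definition, $[T]\in W$ if and only if some $d$-tuple of $k$-dimensional subspaces is isotropic for $T$, so $W=\pi_2(J_1)$. Since $\Gr(k,\mathbb{F}^n)^d$ is projective, $\pi_2$ is closed, so $W$ is closed. As the image of an irreducible set, $W$ is also irreducible.

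The only step needing care is the surjectivity of the restriction map, which gives constant fibre dimension. Since that is elementary linear algebra, I expect no real obstacle.
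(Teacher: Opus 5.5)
Your proposal is correct and follows the paper's route: the paper omits this proof as identical to that of Lemma~\ref{lem-irreducible incidence}. That proof uses an auxiliary incidence variety with upper semicontinuity for closedness, the projective-bundle structure over $\Gr(k,\mathbb{F}^n)^d$ for irreducibility and dimension, and closedness of the projection away from the Grassmannian factor for $W$. Your closedness step, which uses the irreducibility of $\mathbb{P}(\mathbb{U}_1)\times\cdots\times\mathbb{P}(\mathbb{U}_d)$, is the right way to carry that argument over to non-alternating $T$. Your remark that $k=n$ gives $J_1=\emptyset$ is also correct; it is a degenerate case the paper never uses.
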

The idea underlying the proof of Theorem~\ref{main theorem-2} is the same as that for Theorem~\ref{main theorem-1}, but the detailed calculations are different,  which we include below for completeness.

\begin{proof}[Proof of Theorem~\ref{main theorem-2}]
The rank-nullity theorem implies $\alpha(\mathbb{F},n,1,m) =\max\{ 0, n - m\}$. Thus, we may assume $d\ge 2$. For an integer $0 \le k \le n$, let $J_1$ and $W$ be the varieties defined in \eqref{eq:J1} and \eqref{eq:W}, respectively. 

We first prove \ref{main theorem-2:item1} for $d \ge 2$, together with \ref{main theorem-2:item2} for $d \ge 3$. We take $k = \alpha(\mathbb{F},n,d,m)$ and prove $dk(n-k) \ge m k^d$. By definition, we have $W = \mathbb{P}(\Hom^d(\mathbb{F}^{n}, \mathbb{F}^m))$. Let $\pi_1: J_1 \to \Gr(k,\mathbb{F}^n)^d$ be the natural projeciton onto the first factor. Then the fiber of $\pi_1$ is a linear subspace of $\mathbb{P}(\Hom^d(\mathbb{F}^n, \mathbb{F}^m))$ of codimension $m k^d$. This implies that $J_1$ is an irreducible variety of dimension $dk(n-k)+m(n^d-k^d)-1$. The desired inequality follows immediately from $\dim J_1 \ge \dim W = mn^d - 1$.

Conversely, suppose either $m = 1$ and $d\ge 3$ or $m \ge 2$. Let $k$ be the maximal integer satisfying $dk(n-k) \ge m k^d$. We want to show that $\alpha(\mathbb{F},n,d,m) \ge k$, or equivalently, that $W = \mathbb{P}(\Hom^d(\mathbb{F}^{n}, \mathbb{F}^m))$. We observe that $n \ge 2k$. This is clear if $k \le 1$. Assume $k \ge 2$ and $n < 2k$. This implies 
\begin{equation}\label{main theorem-2:eq}
    m k^{d-1}\le d(n-k)\le d(k-1).
\end{equation}
Since $d(k-1) < 2k^{d-1}$, \eqref{main theorem-2:eq} implies that $m < 2$. Thus, \eqref{main theorem-2:eq} holds only if $m = 1$ and $d \ge 3$. However, in this case, we have $k^{d-1}/(k-1) \ge 4 k^{d-3} \ge 2^{d-1} > d$, which contradicts \eqref{main theorem-2:eq}.

If $W \ne \mathbb{P}(\Hom^d(\mathbb{F}^{n}, \mathbb{F}^m))$, then $\dim W \le mn^d - 2$. We consider a subset $J_2$ of $X \coloneqq \Gr(k,\mathbb{F}^n)^d\times \Gr(k,\mathbb{F}^n)^d \times \mathbb{P}(\Hom^d(\mathbb{F}^{n}, \mathbb{F}^m))$ defined as 
\[
J_2 = \{(\mathbb{U}_1,\dots,\mathbb{U}_d,\mathbb{V}_1,\dots, \mathbb{V}_d,[T]) \in X:
T|_{\mathbb{U}_1\times\cdots\times\mathbb{U}_d}=0,\; T|_{\mathbb{V}_1\times\cdots\times\mathbb{V}_d}=0\}.
\]
Let $\rho_{12}: J_2 \to \Gr(k,\mathbb{F}^n)^d\times \Gr(k,\mathbb{F}^n)^d$ and $\rho_3: J_2 \to \mathbb{P}(\Hom^d(\mathbb{F}^{n}, \mathbb{F}^m))$ be natural projection maps. We note that $\rho_3(J_2) = \pi_2(J_1) = W$. Moreover, the same argument as in the proof of Theorem~\ref{main theorem-1} implies that
\begin{equation}\label{eq:LJk2}
\dim J_2 \ge 2 \dim J_1 - \dim W \ge 2dk(n-k)+mn^d-2mk^d.
\end{equation}
On the other side, for each $0 \le \ell \le k$, we define $\Sigma_\ell=\{(\mathbb{U},\mathbb{V})\in\Gr(k,\mathbb{F}^n)^2:
\dim(\mathbb{U}\cap\mathbb{V})=\ell\}$. Since $n \ge 2k$, Lemma~\ref{lem-intersection variety} implies $\dim\Sigma_\ell=2k(n-k+\ell)-\ell(n+\ell)$. For any $0 \le \ell_1,\dots, \ell_d \le k$, the fiber $\rho_{12}^{-1}(\mathbb{U}_1,\dots, \mathbb{U}_d, \mathbb{V}_1,\dots, \mathbb{V}_d)$ is a linear subspace of $\mathbb{P}(\Hom^d(\mathbb{F}^{n}, \mathbb{F}^m))$ of codimension $m(2k^d - \prod_{i=1}^d \ell_i)$. Therefore, we obtain
\begin{equation}\label{eq:secondJK}
    \dim J_2
\le
2dk(n-k)+m(n^d-2k^d)-1
+\max_{0\le \ell_1,\dots,\ell_d \le k}
\Big\{
m\prod_{i=1}^d\ell_i-\sum_{i=1}^d\ell_i(n+\ell_i-2k)
\Big\}.
\end{equation}
We claim that $m\prod_{i=1}^d\ell_i-\sum_{i=1}^d\ell_i(n+\ell_i-2k) \le 0$. This together with \eqref{eq:LJk2}, contradicts \eqref{eq:secondJK}. 

The claim is clearly true for $k = 0$. To prove the claim for $k \ge 1$, we take $a = (n-2k)/k$ and $x_i = \ell_i/k$, $1\le i \le d$. Then
\begin{equation}\label{eq:axi-1}
m\prod_{i=1}^d \ell_i = mk^d \prod_{i=1}^d x_i \le d k(n - k) \prod_{i=1}^d x_i = k^2 d(a+1) \prod_{i=1}^d x_i.
\end{equation}
Since $0 \le a$ and $0 \le x_i \le 1$ for each $1 \le i \le d$, we have $(a+1)x_i \le a + x_i$. By $d \ge 2$ and the AM-GM inequality, we deduce 
\begin{equation}\label{eq:axi-2}
d(a+1) \prod_{i = 1}^d x_i\le d(a+1) \Big(\prod_{i = 1}^d x_i \Big)^{2/d} \le d \Big(\prod_{i = 1}^d (a+x_i)x_i \Big)^{1/d} \le \sum_{i=1}^d (a + x_i)x_i.
\end{equation} 
Combining \eqref{eq:axi-1} and \eqref{eq:axi-2} completes the proof of the claim.

Next, we prove \ref{main theorem-2:item2} with $d \le 2$. Since $\Hom^1(\mathbb{F}^{n}, \mathbb{F})$ consists of linear functions on $\mathbb{F}^{n}$, we clearly have $\alpha(\mathbb{F},n,1,1) = n- 1$. It is left to compute $\alpha(\mathbb{F},n,2,1)$. Given $T \in \Hom^2(\mathbb{F}^{n}, \mathbb{F})$ and $\mathbb{V} \subseteq \mathbb{F}^n$, we have $\dim \mathbb{V}^{\perp} \ge n - \dim \mathbb{V}$ where $\mathbb{V}^{\perp} \coloneqq \{u\in \mathbb{F}^n: T(u,v) = 0,\; v\in \mathbb{V}\}$. Denote $k \coloneqq \alpha(\mathbb{F},n,2,1)$. If $T$ is a non-degerate bilinear form and $T|_{\mathbb{V}_1 \times \mathbb{V}_2} = 0$ for some $k$-dimensional subspaces $\mathbb{V}_1$ and $\mathbb{V}_2$, then $\mathbb{V}_2 \subseteq \mathbb{V}_1^\perp$ and $k \le n - k$. This implies $k \le \lfloor n/2 \rfloor$. Conversely, let $\mathbb{U} \subseteq \mathbb{F}^n$ be a fixed subspace of dimension $\lfloor n/2 \rfloor$. For each $T \in \Hom^2(\mathbb{F}^{n}, \mathbb{F})$, we have $\dim \mathbb{U}^\perp \ge \lceil n/2 \rceil$. Thus, any $\lfloor n/2 \rfloor$-dimensional subspace $\mathbb{V} \subseteq \mathbb{U}$ satisfies $T|_{\mathbb{V} \times \mathbb{U}} = 0$. This implies $k \ge \lfloor n/2 \rfloor$, and completes the proof.
\end{proof}

\section*{Acknowledgements}
The authors would like to thank Dilong Yang and Yijia Fang for helpful discussions, especially Dilong Yang for many comments on an early draft. As a participant in the ECOPRO Summer Student Research Program, Qiyuan Chen acknowledges the organizers, especially Hong Liu, and the members of IBS for their support and hospitality. Zixiang Xu would like to thank Qiyuan Chen and Ke Ye for their hospitality at the Chinese Academy of Sciences. During the early stage of this work, Zixiang Xu was supported by IBS-R029-C4.

\bibliographystyle{abbrv}
\bibliography{Construction}

\begin{appendices}
\section{Lower Bound for the Erd\H{o}s Box Problem and the Barrier}\label{sec:append}
To quantify the complexity of multilinear tensors, Gowers and Wolf~\cite{2011Gowers} introduced the notion of \emph{analytic rank} in the context of higher-order Fourier analysis. This notion was further developed and studied systematically by Lovett~\cite{lovett2018analytic}, and can be viewed as a nonlinear analogue of matrix rank for higher-order tensors.  On the other hand, the \emph{partition rank} is an algebraic complexity measure for tensors, originating from additive combinatorics and playing a central role in a variety of problems in extremal and additive combinatorics~\cite{2017AnnZ4,2017AnnF3,2020JAC,2020JCTAPrank,2023MathkaPrank} and is closely related to analytic rank see~\cite{2020Janzer,lovett2018analytic,2019GAFA}.

Now Let $d,n,m$ be positive integers and let $q$ be a prime power. Let $T \in \Hom^d(\mathbb{F}_q^{n+1},\mathbb{F}_q^m)$.  The \emph{analytic rank}~\cite{2011Gowers} can be defined as
\[
\operatorname{AR}_{\mathbb{F}_q}(T)\coloneqq d(n+1)-\log_q |Z_T|,
\]
where
\(
Z_T \coloneqq \{(x_1,\dots,x_d)\in(\mathbb{F}_q^{n+1})^d:\ T(x_1,\dots,x_d)=0\}.
\)
According to~\cite[Theorem~1.7]{lovett2018analytic}, we have
\[
\operatorname{AR}_{\mathbb{F}_q}(T)\le \operatorname{PR}_{\mathbb{F}_q}(T),
\]
where $\operatorname{PR}_{\mathbb{F}_q}(T)$ denotes the partition rank of $T$.  Since $T$ takes values in $\mathbb{F}_q^m$, we trivially have $\operatorname{PR}_{\mathbb{F}_q}(T)\le m$ by decomposing $T$ into its $m$ coordinate tensors, and hence the analytic rank admits the following uniform upper bound.
  
\begin{lemma}\label{AR-upper-bound}
For any $T \in \Hom^d(\mathbb{F}_q^{n+1},  \mathbb{F}_q^m)$,  $\operatorname{AR}_{\mathbb{F}_q}(T)  \le  m$.  
\end{lemma}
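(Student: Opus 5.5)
The bound follows by chaining two facts already recorded just before the statement: Lovett's inequality $\operatorname{AR}_{\mathbb{F}_q}(T)\le \operatorname{PR}_{\mathbb{F}_q}(T)$ from \cite[Theorem~1.7]{lovett2018analytic}, and the trivial estimate $\operatorname{PR}_{\mathbb{F}_q}(T)\le m$. Only the second needs justification, and I would also give a self-contained counting argument that avoids partition rank altogether.

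For the partition-rank route, write $T=(T_1,\dots,T_m)$ in coordinates. Each $T_j\in\Hom^d(\mathbb{F}_q^{n+1},\mathbb{F}_q)$ has partition rank at most $1$, since it factors as the product of the constant function $1$ (a multilinear form in no variables) and $T_j$ itself. In the vector-valued setting, the natural convention treats $T$ as the $(d+1)$-linear form $\widetilde T(x_1,\dots,x_d,y)=\langle y, T(x_1,\dots,x_d)\rangle$. Then $\widetilde T=\sum_{j=1}^m y_j\,T_j(x_1,\dots,x_d)$, and each summand splits the variables as $\{y\}\sqcup\{x_1,\dots,x_d\}$, so $\operatorname{PR}(\widetilde T)\le m$.

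Under this convention one has to check that the analytic-rank normalization matches. Standard character orthogonality gives
\[
\mathbb{E}_{x,y}\,\chi(\widetilde T(x,y))=\Pr_x[T(x)=0]=|Z_T|\,q^{-d(n+1)},
\]
so $\operatorname{AR}(\widetilde T)=-\log_q\Pr_x[T(x)=0]=d(n+1)-\log_q|Z_T|$, exactly the quantity defined above. Lovett's theorem then yields $\operatorname{AR}_{\mathbb{F}_q}(T)\le m$.

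The only delicate point is this bookkeeping between the vector-valued map and its associated form, and I expect it to be the main obstacle. It can be bypassed by a direct argument, which I would include as the primary proof. Fix $x_1,\dots,x_{d-1}$. The map $x_d\mapsto T(x_1,\dots,x_d)$ is linear into $\mathbb{F}_q^m$, so its kernel has size at least $q^{n+1-m}$. Summing over the $q^{(d-1)(n+1)}$ choices of the first $d-1$ vectors gives $|Z_T|\ge q^{d(n+1)-m}$, that is, $\operatorname{AR}_{\mathbb{F}_q}(T)\le m$.
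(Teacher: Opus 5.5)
Your proof is correct, and your primary argument takes a different route from the paper. The paper proves the lemma only through partition rank: it cites Lovett's inequality $\operatorname{AR}_{\mathbb{F}_q}(T)\le \operatorname{PR}_{\mathbb{F}_q}(T)$ \cite[Theorem~1.7]{lovett2018analytic} and says that $\operatorname{PR}_{\mathbb{F}_q}(T)\le m$ ``by decomposing $T$ into its $m$ coordinate tensors.'' It leaves implicit what partition rank means for an $\mathbb{F}_q^m$-valued map, and why its $d(n+1)-\log_q|Z_T|$ is the analytic rank that Lovett's theorem controls. Your passage to $\widetilde T(x,y)=\langle y,T(x)\rangle$ supplies exactly that missing bookkeeping: the character computation matches the two normalizations, and the splitting $\{y\}\sqcup\{x_1,\dots,x_d\}$ gives $\operatorname{PR}(\widetilde T)\le m$.

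Your fibrewise count needs no external theorem. Rank--nullity gives at least $q^{n+1-m}$ solutions $x_d$ for every fixed $(x_1,\dots,x_{d-1})$, and this holds trivially also when $m>n+1$. Hence $|Z_T|\ge q^{d(n+1)-m}$. This is precisely the only consequence of the lemma used later, in Lemma~\ref{lem:VT}, so the detour through Lovett's theorem is not needed there. What the paper's route buys is a $T$-dependent bound, $|Z_T|\ge q^{d(n+1)-\operatorname{PR}(\widetilde T)}$, which is sharper whenever $\widetilde T$ has partition rank below $m$. It also fits the appendix's analytic/partition-rank framing. For the bound as stated, your count is shorter, self-contained, and even uniform over fibres.

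One sentence should be deleted: the claim that each coordinate form $T_j$ has partition rank at most $1$ because it is the product of the constant $1$ and $T_j$. Partition rank is defined via splittings of the variables into two \emph{nonempty} groups. If an empty group were allowed, every form would have partition rank at most $1$. Lovett's inequality would then fail, since a nondegenerate bilinear form on $\mathbb{F}_q^{n+1}$ has analytic rank $n+1$. The coordinate decomposition yields rank-one pieces only after you adjoin the variable $y$, which is what your next sentence correctly does.
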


Let $\mathcal{G}(T) = (V_T,  E_T)$ be the hypergraph defined by \eqref{eq:Erdox} for $T \in \Hom^d(\mathbb{F}_q^{n+1},  \mathbb{F}_q^m)$.
\begin{lemma}\label{lem:VT}
For fixed positive integers $d,n$ and $m$, the following holds:
\[
|V_T| = \frac{d (q^{n+1}-1)}{q-1},\quad  |E_T| \ge \frac{q^{d(n+1)-m}-dq^{(d-1)(n+1)}}{(q-1)^d}=(1-o(1))q^{dn-m}.
\]
\end{lemma}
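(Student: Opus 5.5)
The vertex count comes directly from the definition in \eqref{eq:Erdox}. $V_T$ is a disjoint union of $d$ copies of $\mathbb{P}^n(\mathbb{F}_q)$, and $|\mathbb{P}^n(\mathbb{F}_q)| = (q^{n+1}-1)/(q-1)$. Hence $|V_T| = d(q^{n+1}-1)/(q-1)$.

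For the edge count, I will compare $E_T$ with the zero set $Z_T \subseteq (\mathbb{F}_q^{n+1})^d$ and then use Lemma~\ref{AR-upper-bound}.

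\textbf{Step 1 (lower bound on $Z_T$).} By the definition of analytic rank, $|Z_T| = q^{d(n+1)-\operatorname{AR}_{\mathbb{F}_q}(T)}$. Lemma~\ref{AR-upper-bound} gives $\operatorname{AR}_{\mathbb{F}_q}(T)\le m$, so $|Z_T| \ge q^{d(n+1)-m}$.

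\textbf{Step 2 (discard tuples with a zero coordinate).} A tuple with $x_i = 0$ for some $i$ lies in $Z_T$ but does not give a point of $(\mathbb{P}^n)^d$. For each fixed $i$ there are $q^{(d-1)(n+1)}$ tuples with $x_i = 0$. A union bound over $i$ shows that at most $d\,q^{(d-1)(n+1)}$ tuples have some zero coordinate. So at least $q^{d(n+1)-m}-dq^{(d-1)(n+1)}$ elements of $Z_T$ have all $x_i\neq 0$.

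\textbf{Step 3 (pass to projective points).} By multilinearity, $T(\lambda_1x_1,\dots,\lambda_dx_d)=\lambda_1\cdots\lambda_d\,T(x_1,\dots,x_d)$, so membership in $Z_T$ depends only on the classes $([x_1],\dots,[x_d])$. The group $(\mathbb{F}_q^\times)^d$ acts freely on the tuples with all nonzero coordinates, and its orbits correspond bijectively to the elements of $E_T$. Dividing by $(q-1)^d$ gives the stated lower bound on $|E_T|$.

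\textbf{Step 4 (asymptotics).} For fixed $d,n,m$ and $q\to\infty$, the inequality $(q-1)^d\le q^d$ gives $q^{d(n+1)-m}/(q-1)^d \ge q^{dn-m}$. The subtracted term is $d\,q^{(d-1)(n+1)}/(q-1)^d = O(q^{(d-1)(n+1)-d}) = O(q^{dn-n-1})$. This is $o(q^{dn-m})$ exactly when $m < n+1$, which holds in the relevant regime $m\le d(n-1)/2^{d-1}$ (for $d\ge2$ this is at most $n-1$). The matching upper bound $|E_T|\le q^{dn}\cdot O(1)$ is not needed, since the claim is only a lower bound.

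The only place needing care is this final asymptotic step. I must state the parameter assumption $m\le n$ explicitly, since otherwise the error term is not negligible and the bound can even be vacuous. Everything else is direct counting.
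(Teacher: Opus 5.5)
Your proposal is correct and follows the same route as the paper: bound $|Z_T|\ge q^{d(n+1)-m}$ via Lemma~\ref{AR-upper-bound}, discard the at most $d\,q^{(d-1)(n+1)}$ tuples with a zero coordinate, and divide by the free $(\mathbb{F}_q^{\times})^d$-action. Your added caveat is also correct: the final asymptotic $(1-o(1))q^{dn-m}$ holds only when $m\le n$, since the subtracted term has relative size $d\,q^{m-n-1}$, and the paper's ``for fixed positive integers $d,n,m$'' leaves this assumption unstated.
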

\begin{proof}
Since $V_T$ is the disjoint union of $d$ copies of $\mathbb{P}^n(\mathbb{F}_q)$,  
\[
|V_T|  = d |\mathbb{P}^n(\mathbb{F}_q)| = \frac{d (q^{n+1}-1)}{q-1}.
\]
By definition and Lemma~\ref{AR-upper-bound},  we have $|Z_T| \geq q^{d(n+1)-\mathrm{AR}_{\mathbb{F}_q}(T)}\geq q^{d(n+1)-m}$.  Since 
\[
E_T = \big( Z_T \setminus \{ (x_1,\dots,  x_d)\in (\mathbb{F}_q^{n+1})^d:  x_i = 0\text{~for some~} 1 \le i \le d \} \big) / (\mathbb{F}_q^{\times})^d \subseteq \mathbb{P}^n(\mathbb{F}_q)^d,  
\]
we obtain 
\[
|E_T| \geq \frac{|Z_T|-dq^{(d-1)(n+1)}}{(q-1)^d}\geq \frac{q^{d(n+1)-m}-dq^{(d-1)(n+1)}}{(q-1)^d}=(1-o(1))q^{dn-m}.  \qedhere
\]
\end{proof}
Let $D_T$ be the subset of $\Gr(2,\mathbb{F}_q^{n+1})^d$ consisting of all $d$-tuples $(\mathbb{V}_1,\dots , \mathbb{V}_d)$ such that $T|_{\mathbb{V}_1 \times \cdots \times \mathbb{V}_d} = 0$.  
\begin{lemma}\label{lem:count_bad_configuration}
Let $n,d\ge 2$ be positive integers.  For any sufficiently large prime power $q$,  there exists $T\in \Hom^{d}(\mathbb{F}_{q}^{n+1},\mathbb{F}_{q}^{m})$ such that 
\begin{equation}\label{eq:DT}
|D_T|\leq q^{-m2^{d}}\dfrac{(q^{n+1}-1)^{d}(q^{n+1}-q)^{d}}{(q^{2}-1)^{d}(q^{2}-q)^{d}}=(1+o(1))q^{2(n-1)d-m2^{d}}.
\end{equation}
\end{lemma}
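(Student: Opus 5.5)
We will prove Lemma~\ref{lem:count_bad_configuration} by a first-moment argument: choose $T$ uniformly at random from $\Hom^{d}(\mathbb{F}_{q}^{n+1},\mathbb{F}_{q}^{m})$, compute $\mathbb{E}|D_T|$ exactly, and pick a $T$ with $|D_T|\le \mathbb{E}|D_T|$. No largeness of $q$ is needed for the inequality itself. Largeness of $q$ only enters in the asymptotic form $(1+o(1))q^{2(n-1)d-m2^d}$.

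First, write the count as a sum over configurations,
\[
\mathbb{E}|D_T|=\sum_{(\mathbb{V}_1,\dots,\mathbb{V}_d)\in \Gr(2,\mathbb{F}_q^{n+1})^d}\Pr\big[T|_{\mathbb{V}_1\times\cdots\times\mathbb{V}_d}=0\big].
\]
Next, fix a tuple $(\mathbb{V}_1,\dots,\mathbb{V}_d)$. Identify $\Hom^d(\mathbb{F}_q^{n+1},\mathbb{F}_q^m)$ with $\Hom\big((\mathbb{F}_q^{n+1})^{\otimes d},\mathbb{F}_q^m\big)$. The condition $T|_{\mathbb{V}_1\times\cdots\times\mathbb{V}_d}=0$ says that $\widetilde T$ vanishes on the subspace $\mathbb{V}_1\otimes\cdots\otimes\mathbb{V}_d$, which has dimension $2^d$. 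Equivalently, in bases adapted to the $\mathbb{V}_i$, it says that $m2^d$ specific coordinates of $T$ vanish.

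The restriction map $\Hom((\mathbb{F}_q^{n+1})^{\otimes d},\mathbb{F}_q^m)\to \Hom(\mathbb{V}_1\otimes\cdots\otimes\mathbb{V}_d,\mathbb{F}_q^m)$ is a surjective linear map. Hence the pushforward of the uniform distribution is uniform, and the probability in question is exactly $q^{-m2^d}$.

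Then multiply by the number of configurations. Since $|\Gr(2,\mathbb{F}_q^{n+1})|=\frac{(q^{n+1}-1)(q^{n+1}-q)}{(q^2-1)(q^2-q)}$, we get
\[
\mathbb{E}|D_T| = q^{-m2^{d}}\frac{(q^{n+1}-1)^{d}(q^{n+1}-q)^{d}}{(q^{2}-1)^{d}(q^{2}-q)^{d}}.
\]
Some $T$ therefore attains $|D_T|$ at most this value, which is \eqref{eq:DT}.

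Finally, the asymptotic equality is routine. Each Grassmannian factor is $(1+o(1))q^{2(n+1)-4}=(1+o(1))q^{2(n-1)}$ as $q\to\infty$ with $n,d$ fixed. This gives $(1+o(1))q^{2(n-1)d-m2^d}$.

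There is no real obstacle here. The only point to check carefully is the surjectivity of the restriction map, and the resulting exact value $q^{-m2^d}$ for the probability. That surjectivity holds because $\mathbb{V}_1\otimes\cdots\otimes\mathbb{V}_d$ is a subspace of $(\mathbb{F}_q^{n+1})^{\otimes d}$, and every linear map on a subspace extends to the whole space.
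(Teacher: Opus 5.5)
Your proposal is correct and is essentially the paper's own argument. The paper double counts the incidence set $J_1 \subseteq \Gr(2,\mathbb{F}_q^{n+1})^d \times \mathbb{P}(\Hom^d(\mathbb{F}_q^{n+1},\mathbb{F}_q^m))$ via its fibres over the Grassmannian factor and then applies pigeonhole, which is exactly your first-moment computation. Averaging over the affine space instead of its projectivization has a small advantage: it gives the probability $q^{-m2^d}$ exactly. In the projective count, with $N=m(n+1)^d$, the fibre and base have $(q^{N-m2^d}-1)/(q-1)$ and $(q^{N}-1)/(q-1)$ points, so their ratio is only bounded above by $q^{-m2^d}$. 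You are also right that largeness of $q$ enters only in the asymptotic form.
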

\begin{proof}
Let $J_1$ be the subvariety of $\Gr(2,\mathbb{F}_q^{n+1})^{d} \times \mathbb{P} ( \Hom^{d}(\mathbb{F}_{q}^{n+1},\mathbb{F}^{m}_{q}) )$ defined in \eqref{eq:J1} and let $\pi_1: J_1 \to \Gr(2,\mathbb{F}_{q}^{n+1})^d$ be the natural projection map.  Given any $(\mathbb{V}_1,\dots , \mathbb{V}_d) \in \Gr(2,\mathbb{F}_{q}^{n+1})^{d}$,  we have $|\pi_1^{-1}(\mathbb{V}_1,\dots , \mathbb{V}_d)| = q^{m( (n+1)^{d}-2^{d})}/(q-1)$.  Thus by \cite[Proposition~1.7.2]{stanley2011enumerative},  it holds that
\[
\vert J_1 \vert= \frac{q^{m( (n+1)^{d}-2^{d})}}{q-1} \vert \Gr(2,\mathbb{F}_{q}^{n+1})^{d}\vert= \frac{q^{m( (n+1)^{d}-2^{d})}}{q-1}\dfrac{(q^{n + 1}-1)^{d}(q^{n + 1}-q)^{d}}{(q^{2}-1)^{d}(q^{2}-q)^{d}}.  
\]
By the Pigeonhole principle,  there must exist $[T]\in\Hom^{d}(\mathbb{F}_{q}^{n+1},\mathbb{F}^{m}_{q})$ such that 
\[
|D_T| \le \frac{|J_1|}{|\mathbb{P} (\Hom^d(\mathbb{F}_q^{n+1},  \mathbb{F}_q^m))|}  = q^{-m2^{d}}\dfrac{(q^{n+1} -1)^{d}(q^{n+1}-q)^{d}}{(q^{2}-1)^{d}(q^{2}-q)^{d}}.  \qedhere
\]
\end{proof}
Let $T$ be a multilinear map satisfying \eqref{eq:DT}. According to the multilinear method described in Subsection~\ref{subsubsec:Erdos},  we construct the hypergraph $\mathcal{G}'(T)$ by deleting the hyperedges of $\mathcal{G}(T)$ corresponding to points in \(\bigcup_{(\mathbb{V}_1,\dots,\mathbb{V}_d)\in D_T}
\Big(\mathbb{P}(\mathbb{V}_1)\times\cdots\times \mathbb{P}(\mathbb{V}_d)\Big).\) 
\begin{prop}\label{prop:A}
If $(n-1)d < m(2^{d}-1)$, then the hypergraph $\mathcal{G}'(T)$ has $\Omega (q^{dn - m})$ hyperedges,  $O(q^n)$ vertices and without $K^{(d)}_{2,\dots,  2}$.  In particular,  $\mathcal{G}'(T)$ achieves the lower bound in \eqref{eq:CPZ}.
\end{prop}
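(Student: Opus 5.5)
The argument has three parts: the vertex count, the absence of $K^{(d)}_{2,\dots,2}$, and the edge count. The first two follow from the construction, and the real work is comparing the number of deleted hyperedges with $|E_T|$.

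\emph{Vertices.} By Lemma~\ref{lem:VT}, $|V_T| = d(q^{n+1}-1)/(q-1) = O(q^n)$. Deleting hyperedges does not change the vertex set, so $\mathcal{G}'(T)$ also has $N \coloneqq O(q^n)$ vertices.

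\emph{No $K^{(d)}_{2,\dots,2}$.} Suppose $\mathcal{G}'(T)$ contains a copy with parts $\{[a_i],[b_i]\}$, $1\le i\le d$. The graph is $d$-partite, so each part lies in the $i$-th copy of $\mathbb{P}^n(\mathbb{F}_q)$. Put $\mathbb{V}_i=\spa\{a_i,b_i\}$, which is $2$-dimensional. All $2^d$ hyperedges lie in $E_T$, so $T$ vanishes on every choice $(c_1,\dots,c_d)$ with $c_i\in\{a_i,b_i\}$. By multilinearity, $T|_{\mathbb{V}_1\times\cdots\times\mathbb{V}_d}=0$, so $(\mathbb{V}_1,\dots,\mathbb{V}_d)\in D_T$. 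Then every one of these hyperedges lies in $\mathbb{P}(\mathbb{V}_1)\times\cdots\times\mathbb{P}(\mathbb{V}_d)$ and was deleted, which is a contradiction.

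\emph{Edges.} This is the main step. Each configuration in $D_T$ deletes at most $|\mathbb{P}^1(\mathbb{F}_q)|^d=(q+1)^d = O(q^d)$ hyperedges. By \eqref{eq:DT}, the number of deleted hyperedges is therefore at most
\[
O\big(q^{d}\cdot q^{2(n-1)d-m2^d}\big)=O\big(q^{(2n-1)d-m2^d}\big).
\]
Lemma~\ref{lem:VT} gives $|E_T|\ge(1-o(1))q^{dn-m}$, so we need
\[
(2n-1)d-m2^d < dn-m,
\]
which is equivalent to $(n-1)d<m(2^d-1)$. This is exactly the hypothesis, so the deletions are $o(q^{dn-m})$ and $|E(\mathcal{G}'(T))|=\Omega(q^{dn-m})$. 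The exponent comparison is where the hypothesis is used, so I would check this inequality carefully.

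\emph{Conclusion.} Write $N\asymp q^n$. Then the edge count is $q^{dn-m}=N^{d-m/n}$. This gives \eqref{eq:CPZ} for $N$ of the form $d(q^{n+1}-1)/(q-1)$ with $q$ a sufficiently large prime power. For general $N$, take $q$ to be the largest prime power with $d(q^{n+1}-1)/(q-1)\le N$. By Bertrand's postulate applied to primes, $q=\Theta(N^{1/n})$. Adding isolated vertices then extends the bound to all large $N$, with only constant-factor losses.
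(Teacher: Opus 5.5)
Your proposal is correct and follows the paper's proof: the same spanning-subspace argument gives $K^{(d)}_{2,\dots,2}$-freeness, and the edge count bounds the deletions by $(q+1)^d|D_T|$ and compares exponents using the integer gap in $(n-1)d<m(2^d-1)$ (the paper phrases this as losing at most a $(1+o(1))/q$ fraction of $|E_T|$). Your Bertrand-postulate extension to arbitrary $N$ just makes explicit the paper's ``in particular'' clause.
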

\begin{proof}
First $\mathcal{G}'(T)$ has the same number of vertices as $\mathcal{G}(T)$.
To see $\mathcal{G}'(T)$ is $K^{(d)}_{2,\ldots,  2}$,-free  suppose $\mathcal{G}(T)$ contains such a copy, and for each part $1 \le j \le d$ let its two vertices be
$[p_j]$ and $[q_j]$ with $p_j,q_j\in \mathbb{F}_q^{n+1}\setminus\{0\}$. Set $\mathbb{V}_j\coloneqq \langle p_j,q_j\rangle$. Since all $2^d$ edges of this $K^{(d)}_{2,\dots,2}$ belong to $\mathcal{G}(T)$, we have
$T(x_1,\dots,x_d)=0$ for every choice $x_j\in\{p_j,q_j\}$, which implies  $(\mathbb{V}_1,\dots,\mathbb{V}_d)\in D_T$.
Thus, every edge of the copy lies in
$\mathbb{P}(\mathbb{V}_1)\times\cdots\times \mathbb{P}(\mathbb{V}_d)$ and is deleted in $\mathcal{G}'(T)$. Moreover, Lemma~\ref{lem:VT} implies that $\mathcal{G}'(T)$ has $O(q^n)$ vertices, and  $\mathcal{G}'(T)$ is obtained from $\mathcal{G}(T)$ by deleting $(q+1)^d |D_T|$ hyperedges. Lemmas~\ref{lem:count_bad_configuration} and \ref{lem:VT} imply that the number of hyperedges of $\mathcal{G}'(T)$ is at least $(1 - (1 + o(1))/q ) |E_T| = \Omega(q^{dn - m})$.  
\end{proof}

We now apply Theorem~\ref{main theorem-2} to prove Corollary~\ref{cor:Erdox}, relying on the following lemma.

\begin{lemma}\label{effective prime decomposition}
There is a function $M: \mathbb{N} \times \mathbb{N} \times \mathbb{N} \to \mathbb{N}$ with the following property.  Given $f_1,\dots,  f_n \in \mathbb{F}_q[x_1,\dots,  x_N]$ of degrees at most $D$, every irreducible component of $X = \{a \in \overline{\mathbb{F}}_q: f_1(a) = \cdots = f_n(a) = 0\}$ is defined by some polynomials $g_1,\dots,  g_{\ell}\in \mathbb{F}_{q^r}[x_1,\dots,  x_N]$ for any $r$ divisible by $M(n,N,D)$.  Moreover,  we have $\max\{\ell,  \deg g_1,\dots,  \deg g_{\ell} \} \le M(n,N,D)$.
\end{lemma}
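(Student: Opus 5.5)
The plan is to reduce the statement to effective bounds for primary (or irreducible) decomposition over an algebraically closed field, and then to a Galois-orbit argument that controls the field of definition of each component. Note that $X$ is really a subset of $\overline{\mathbb{F}}_q^N$.

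\textbf{Step 1: degree bounds for the components.} Consider the ideal $I=(f_1,\dots,f_n)\subseteq \overline{\mathbb{F}}_q[x_1,\dots,x_N]$. By the refined Bézout inequality (Heintz, Fulton), the sum of the degrees of the irreducible components of $X$ is at most $D^{N}$. In particular, the number of components is at most $D^N$ and each has degree at most $D^N$. A classical result (Heintz; see also Mumford's bound) states that an irreducible variety $Y\subseteq \overline{\mathbb{F}}^N$ of degree $\delta$ is set-theoretically cut out by polynomials of degree at most $\delta$. The number of such polynomials can be bounded as well: either take a basis of the degree-$\le\delta$ part of the ideal of $Y$, which gives at most $\binom{N+\delta}{N}$ polynomials, or take $N+1$ generic linear combinations of them. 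This yields an explicit $M_0(N,D)$ bounding both $\ell$ and $\deg g_i$. The bound is even independent of $n$, but allowing a dependence on $n$ is harmless.

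\textbf{Step 2: field of definition.} Let $\mathrm{Frob}_q$ act on $\overline{\mathbb{F}}_q^N$. Since the $f_j$ are defined over $\mathbb{F}_q$, Frobenius permutes the set of components of $X$, which has size $c\le D^N$. Hence $\mathrm{Frob}_q^{c!}$ fixes each component $Y$. A variety fixed by $\mathrm{Frob}_{q^{s}}$ has a Galois-stable ideal. The space $I(Y)_{\le\delta}$ is a finite-dimensional $\overline{\mathbb{F}}_q$-vector space stable under the semilinear action of $\mathrm{Gal}(\overline{\mathbb{F}}_q/\mathbb{F}_{q^{s}})$. By Galois descent (Hilbert 90), it therefore has a basis defined over $\mathbb{F}_{q^{s}}$ with $s=c!$. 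Consequently $Y$ is cut out by polynomials $g_i\in\mathbb{F}_{q^{s}}[x]$ of degree at most $\delta\le D^N$, and the same polynomials lie in $\mathbb{F}_{q^r}[x]$ whenever $s\mid r$.

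\textbf{Step 3: define $M$.} Set $M(n,N,D)$ to be a common multiple of $(D^N)!$ that also exceeds $\binom{N+D^N}{N}$, for instance $M=(D^N)!\cdot\binom{N+D^N}{N}$. Any $r$ divisible by $M$ is then divisible by $s=c!$, because $c\le D^N$ implies $c!\mid (D^N)!$. This gives both required properties.

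The main obstacle is Step 1: quoting a correct degree bound for generators of a prime ideal, or at least for set-theoretic equations. To keep things safe, I would only claim set-theoretic definition. Projecting $Y$ generically to hypersurfaces (cones over $Y$) gives equations of degree $\le\delta$ whose common zero set is exactly $Y$; this is Heintz's construction. It is enough here, since the lemma only asks that the component be \emph{defined} by the $g_i$ as a zero set.
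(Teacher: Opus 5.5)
Your proposal is correct, and it takes a genuinely different route from the paper. The paper treats the lemma as a black box. It invokes Chistov's algorithm for decomposing varieties over finite fields, which directly gives, for each component $X_i$, an extension $\mathbb{F}_{q^{M_i}}$ together with at most $M_i$ defining equations of degree at most $M_i$. It then uses $s\le \deg X\le D^n$ and sets $M=\operatorname{lcm}(M_1,\dots,M_s)$.

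You instead decouple the two requirements. The bounds on $\ell$ and $\deg g_i$ come from the Bézout inequality together with Heintz's cone construction, which gives set-theoretic equations of degree at most $\deg Y$ and is valid in every characteristic. The field of definition comes from the observation that Frobenius permutes the at most $D^N$ components, so $\mathrm{Frob}_q^{c!}$ stabilizes each one. You then apply descent to the Galois-stable finite-dimensional space $I(Y)_{\le\delta}$; this step can even be done by hand, via uniqueness of the reduced row echelon basis.

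What your version buys:
\begin{itemize}
\item An explicit value $M=(D^N)!\binom{N+D^N}{N}$.
\item Transparent uniformity in $r$: since $c!\mid (D^N)!$ for every possible number $c$ of components, the same $M$ works for all inputs. By contrast, the paper's $\operatorname{lcm}(M_1,\dots,M_s)$ as written depends on $X$, and it tacitly needs a uniform bound on the $M_i$.
\end{itemize}
The paper's citation is shorter and algorithmic, but it rests on much heavier machinery. Your restriction to set-theoretic equations is harmless, since that is all the lemma asserts and all its use in Corollary~\ref{cor:Erdox} requires.
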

\begin{proof}
Let $X_1,\dots,  X_s$ be irreducible components of $X$.  According to the algorithm in \cite{chistov86} for the decomposition $X = \cup_{i=1}^s X_i$,  each $X_i$ is defined over $\mathbb{F}_{q^{M_i}}$ for some positive integer $M_i$.  Here for each $1 \le i \le s$,  $M_i$ only depends on $n,N,D$. Furthermore,  $X_i$ can be defined by at most $M_i$ equations over $\mathbb{F}_{q^{M_i}}$, each of degree at most $M_i$.  Lastly,  we observe that $s \le \deg X \le D^n$. This implies $M(n,N,D) \coloneqq \operatorname{lcm}(M_1,\dots,  M_s)$ is the desired function.
\end{proof}

\begin{proof}[Proof of Corollary~\ref{cor:Erdox}]
Given $T \in \Hom^d(\mathbb{F}^n,\mathbb{F}^m)$, we denote $X \coloneqq D_T(\overline{\mathbb{F}}_q)$. We claim that $\dim X \geq 2d(n-1)- m2^{d}$. Viewing $\Gr(2,\mathbb{F}_{q}^{n+1})$ as a subvariety of $\mathbb{P}\big( \bigwedge^2 \mathbb{F}_q^{n+1} \big)$ defined by quadratic polynomials,  we observe that $D_{T} \subseteq \Gr(2,\mathbb{F}_{q}^{n+1})^{d}$ is defined by $u$ polynomials in $v$ variables of degree most $w$,  for some $u,v, w$ depending only on $d$, $n$ and $m$. Let $M$ be the function in Lemma~\ref{effective prime decomposition}. Then for any positive integer $r$ divisible by some integer $r_0 \ge M(u,v,w)$, irreducible components of $X$ are defined over $\mathbb{F}_{q^r}$ by $s$ polynomials of degree at most $t$,  where $\max\{s,t\} \le M(u,v, w)$.  In particular,  $X$ is a variety defined over $\mathbb{F}_{q^r}$ whose geometrically irreducible components are also defined over $\mathbb{F}_{q^r}$.  We apply \cite[Theorem~8.1.1]{kedlaya_weil_cohomology} to $X$ and conclude that $|X(\mathbb{F}_{q^r})| = \Omega_{d,m,n}(q^{r(2d(n-1) - m2^d)})$.  By further increasing $r$,  if necessary,  we obtain $|X(\mathbb{F}_{q^r})| > 1/2 q^{r(2d (n-1) - m2^d)}$.  Hence we may define $C_1(n,d,m)$ to be the smallest such $r_0$. 

It remains to prove $\dim D_T \geq 2d(n-1)- m2^{d}$ for any $T \in \Hom^d(\mathbb{F}_q^{n+1},  \mathbb{F}_q^m)$.  Let $J_1$ be the subvariety of $\Gr(2,\overline{\mathbb{F}}_q^{n+1})^{d} \times \mathbb{P} ( \Hom^{d}( \overline{\mathbb{F}}_{q}^{n+1},\overline{\mathbb{F}}^{m}_{q}) )$ defined in \eqref{eq:J1} and let $\pi_2: J_1 \to \mathbb{P} ( \Hom^{d}(\overline{\mathbb{F}}_{q}^{n+1},\overline{\mathbb{F}}^{m}_{q}) )$ be the natural projection map.  Theorem~\ref{main theorem-2} implies that $\pi_2$ is surjective.  Note that $D_T =\pi_2^{-1}([T])$ for each $[T] \in \mathbb{P} ( \Hom^{d}(\overline{\mathbb{F}}_{q}^{n+1},\overline{\mathbb{F}}^{m}_{q}) )$.  Hence Lemmas~\ref{dimension of fibre},  \ref{upper-semicontinuous} and \ref{lem:dim J1} lead to
\begin{align*}
\dim D_T &\ge \dim J_1 - \dim \mathbb{P} ( \Hom^{d}( \overline{\mathbb{F}}_{q}^{n+1},\overline{\mathbb{F}}^{m}_{q}) ) \\
&= \big(2d(n-1) + m((n+1)^d - 2^d) - 1\big) - \big(m(n+1)^d - 1\big) \\&= 2d(n-1) - m 2^d. 
\end{align*}
\end{proof}
\section{ Python Code for $\mathrm{char}(\mathbb{F})=2$}\label{sec:appendB}
The following code uses \texttt{itertools} and \texttt{SymPy} to perform brute-force search and Gr\"obner basis computation over $\mathbb{F}_2$:
\begin{lstlisting}
import itertools
import sympy as sp

# Define the support of the alternating trilinear form T over F_2
triples = list(itertools.combinations(range(7), 3))
ones_T = {(1, 2, 5), (1, 3, 6), (1, 4, 7), (2, 3, 4), (5, 6, 7)}
form = [1 if tuple(i + 1 for i in t) in ones_T else 0 for t in triples]

def chart_matrix(pivots, k=5, n=7):
    nonp = [j for j in range(n) if j not in pivots]
    M = [[0] * n for _ in range(k)]
    vars_ = []
    for i, p in enumerate(pivots):
        M[i][p] = 1
    for i, p in enumerate(pivots):
        for j in nonp:
            if j > p:
                x = sp.symbols(f"x_{i}_{j}")
                M[i][j] = x
                vars_.append(x)
    return sp.Matrix(M), vars_

def equations_for_chart(B, vars_):
    equations = []
    for rows in itertools.combinations(range(5), 3):
        value = 0
        R = B[list(rows), :]
        for coeff, Tidx in zip(form, triples):
            if coeff:
                value += R[:, list(Tidx)].det()
        if vars_:
            poly = sp.Poly(value, *vars_, modulus=2).as_expr()
        else:
            poly = int(value) % 2
        equations.append(poly)
    return [f for f in equations if f != 0]

def eval_mod2(expr, sub):
    val = sp.expand(expr.subs(sub))
    return int(val) % 2

# Main loop over all 5-element pivot subsets of {0,1,...,6}
checked = 0
unit_by_constant = []
unit_by_groebner = []
failures = []
examples = []

for pivots in itertools.combinations(range(7), 5):
    B, vars_ = chart_matrix(pivots)
    equations = equations_for_chart(B, vars_)
    checked += 1
    
    if any(f == 1 for f in equations):
        unit_by_constant.append(pivots)
        continue
    
    found_point = None
    for vals in itertools.product([0, 1], repeat=len(vars_)):
        sub = dict(zip(vars_, vals))
        ok = True
        for f in equations:
            if eval_mod2(f, sub) != 0:
                ok = False
                break
        if ok:
            found_point = sub
            break
    if found_point is not None:
        examples.append((pivots, B.subs(found_point), found_point))
        failures.append((pivots, "has F2-point"))
        continue
    
    if not vars_:
        failures.append((pivots, "no free variables"))
        continue
    
    # Gr\"{o}bner basis over F_2
    G = sp.groebner(equations, *vars_, modulus=2, order="grevlex")
    if len(G.polys) == 1 and G.polys[0] == sp.Poly(1, *vars_, modulus=2):
        unit_by_groebner.append(pivots)
    else:
        failures.append((pivots, G))

# Output statistics
print("Total charts checked:", checked)
print("Empty by constant 1=0:", len(unit_by_constant))
print("Empty by Gr\"{o}bner basis:", len(unit_by_groebner))
print("Remaining failures:", len(failures))

if examples:
    print("\nFound F2 isotropic subspace example:")
    piv, mat, sol = examples[0]
    print("Pivot indices:", piv)
    print("Matrix:\n", mat)
else:
    if failures:
        print("\nFirst failure case:")
        print(failures[0][0], failures[0][1])
    else:
        print("\nAll charts are empty over F2 and its extensions.")
\end{lstlisting}
\end{appendices}

\end{document}